\newcommand{\R}[0]{\mathbb{R}}
\newcommand{\Z}[0]{\mathbb{Z}}
\renewcommand{\P}[0]{\mathbb{P}}
\newcommand{\E}[0]{\mathbb{E}}
\newcommand{\sE}[0]{\mathcal{E}}
\newcommand{\sH}[0]{\mathcal{H}}
\newcommand{\sC}[0]{\mathcal{C}}
\newcommand{\sS}[0]{\mathcal{S}}
\newcommand{\sP}[0]{\mathcal{P}}
\newcommand{\sR}[0]{\mathcal{R}}
\newcommand{\sU}[0]{\mathcal{U}}
\newcommand{\sV}[0]{\mathcal{V}}
\newcommand{\sL}[0]{\mathcal{L}}
\newcommand{\Var}[0]{\text{Var\,}}
\newcommand{\Cov}[0]{\text{Cov\,}}
\newcommand{\m}[0]{\wedge}
\newtheorem{theorem}{Theorem}[section]
\newtheorem{corollary}[theorem]{Corollary}
\newtheorem{lemma}[theorem]{Lemma}
\newtheorem{prop}[theorem]{Proposition}
\newtheorem{definition}[theorem]{Definition}
\newtheorem{remark}[theorem]{Remark}
\newtheorem*{conditions}{Conditions}
\numberwithin{equation}{section}
\newcommand{\bl}[0]{{\bf l}}
\newcommand{\eps}{\varepsilon}
\renewenvironment{proof}[1][\proofname]{\par
\pushQED{\qed}%
\normalfont \topsep6\p@\@plus6\p@\relax
\trivlist
\item\relax
{\bfseries  
#1\@addpunct{.}}\hspace\labelsep\ignorespaces 
}{%
\popQED\endtrivlist\@endpefalse
}
\title{Poisson statistics, vanishing correlations, and extremal particle limits for symmetric exclusion in $d > 1$} 
\author{Michael Conroy\thanks{Clemson University, meconro@clemson.edu} \ \ and \ \ Sunder Sethuraman\thanks{University of Arizona, sethuram@arizona.edu}}
\date{}
\begin{document}
\maketitle

\begin{abstract} We consider the symmetric simple exclusion system on $\Z^d$, $d \ge 2$, starting from a class of ``step'' initial
conditions in which particles are constrained within a half-space. One may count the number $N_t$ of particles that have moved beyond a distance $z = z(t)$ into the initially-empty half of $\Z^d$ at time $t$. We show in large generality that when $\lim_{t\to\infty} E[N_t]$ exists, correlations between particles beyond $z$ 
vanish as $t \to \infty$ so as to allow convergence of $N_t$ to the same Poisson distribution one would get were the particles allowed to move independently. 
When the initial condition constrains a region of polynomial growth, we identify $z(t)$ and the limit of $E[N_t]$ explicitly. As a consequence of the limit, we obtain a Gumbel limit distribution for the extremal particle position, as well as the limiting distributions of all order statistics.
\end{abstract}

\smallskip
{\small
\noindent {\it Keywords.} Interacting particle systems, exclusion process, SSEP, step profile, Gumbel distribution, extreme values.

\smallskip
\noindent {\it 2020 Mathematics Subject Classifications.} 60K35, 60F05. 
}

\section{Introduction}

 Informally, the symmetric simple exclusion process (SSEP) on $\Z^d$ consists of a system of continuous time simple 
random walks, in which particles can jump to each neighboring lattice point with probability $(2d)^{-1}$ at rate $1$, but where jumps to already-occupied sites are suppressed. A more formal definition in terms of both a generator and the related ``stirring'' process is given in Section \ref{defn_section}.  Such an interacting particle system is well-known in the modeling of traffic, queues, fluid flow, and other applications.  See for instance books \cite{KL}, \cite{LigBook05}, \cite{LigBook}, and \cite{Spohn} for a more in depth discussion of its history and motivations.  Recent investigations in the symmetric exclusion process include \cite{EFX} for joint occupation time and current fluctuations in $d=1$, \cite{xue-zhao} for a moderate deviation principle from the hydrodynamic limit in $d=1$, \cite{redig-saada} for effects of a ``source'' on the stationary distribution, \cite{Gess} for quantitative central limit theorems, and \cite{Bodineau} for large deviations for two point correlations in $d=1$, among others.

While the symmetric exclusion process is a well-established model, the study of its extreme value behavior, although a natural concern, is not well understood.  Part of the difficulty is that the asymptotics of the extremes is beyond the diffusive scale, and not captured by the hydrodynamic bulk mass limit.  A general purpose of this work is to study the asymptotic behavior of the extreme values in dimensions $d>1$ including connections with certain geometries, following a previous investigation in $d=1$ where the geometrical influence is more limited.

Let $\{\eta_t(k): k\in \Z^d\}$ denote the occupation variables in the exclusion system, so that $\eta_t(k)=1$ when there is a particle at $k$ at time $t$, and $\eta_t(k) = 0$ otherwise.  Given a ``level'' $z\geq 0$, consider the hyperspace $B_z=\{k\in \Z^d: k_1 > z\}$ and the number of particles $N_t = \sum_{k\in B_z}\eta_t(k)$ in it.  Let $\mathcal{P}_t = \{k_1\in \Z: \eta_t(k) = 1\}$ be the projection of occupied sites on the first coordinate, and let $X_t = \max \mathcal{P}_t$ be the maximal position of a particle in the $k_1$ direction.  

Roughly, one can understand the $k_1$-locations of the extreme values in terms of $z=z(t)$ if $\{N_t: t\geq 0\}$ is tight.  Moreover, if $N_t$ converges weakly to some random variable $Z$, then we could conclude $P(X_t\leq z) = P(N_t = 0) \rightarrow P(Z=0)$.  A fuller distributional limit for $X_t$ may be obtained when the level $z$ is allowed to vary with respect to a parameter, say $x\in \R$.  In this case, the limit $Z=Z_x$ may give a finer description to the statistics of $X_t$.

For instance, in the one dimensional setting $d=1$, it has been shown under a step initial condition, that is when $\eta_0(k) = 1$ for $k\leq 0$ and $\eta_0(k)=0$ for $k>0$, that $z$ may be taken in the form $z = b^0_t(x+a^0_t)$, where
$a^0_t = \log(t/(\sqrt{2\pi}\log t))$ and $b^0_t = \sqrt{t/\log t}$, so that $N_t$ converges weakly to a Poisson random variable $Z_x$ with mean $e^{-x}$.  Hence, the extreme particle location satisfies a Gumbel limit: $P(X_t/b^0_t - a^0_t \leq x)\rightarrow P(Z_x=0) = 
e^{-e^{-x}}$; see \cite{ConSet23}.

Interestingly, the same limits hold in independent particle systems on $\Z$, where there is no correlation between particles.
For instance, we recall the standard result that, for a row-wise independent array $\{\chi_{i,t}\}$ of Bernoulli random variables, 
$\sum_{i=1}^\infty \chi_{i,t} \Rightarrow \mbox{Poisson}(\lambda)$
as $t \to \infty$
if and only if 
\begin{equation}\label{eq:meanandss}
	\sum_{i=1}^\infty E[\chi_{i,t}] \to \lambda \qquad\mbox{and}\qquad \sum_{i=1}^\infty (E[\chi_{i,t}])^2 \to 0. 
\end{equation}
Moreover, since $\sum_{i=1}^\infty (E[\chi_{i,t}])^2 = E[N_t] - \Var(N_t)$, the line 
\eqref{eq:meanandss} is equivalent to 
\begin{equation}\label{eq:meanvariancelimit}
	\lim_{t\to\infty} E[N_t] = \lim_{t\to\infty} \Var(N_t) = \lambda. 
\end{equation}
One may apply these criteria to $N_t$ in the independent particle system context \cite{Arr83}.   Further, the second limit in \eqref{eq:meanandss} is implied by the first, and so only convergence of $E[N_t]$ is required for the Poisson limit of $N_t$.  See also \cite{Corwin} and \cite{MikYsl20} for recent treatments of the extremes in systems of independent particles in different settings. 

However, the proof of the Poisson and Gumbel limits in $d=1$ with respect to the symmetric exclusion process involve making correlation estimates of the exclusion particles.  Indeed, by the Strong Rayleigh property of symmetric exclusion processes (see Subsection \ref{Rayleigh}), the Poisson($\lambda$) limit of $N_t$ holds exactly when \eqref{eq:meanvariancelimit} holds.  In particular,   
 \begin{align}\label{eq:meanminusvarexcl}
	E[N_t] - \Var(N_t) = \sum_{x} (E[\eta_t(x)])^2 - \sum_{x \ne y} \Cov(\eta_t(x), \eta_t(y)),
\end{align}
and the second term---not present with respect to the independent particle system---was shown to vanish in $d=1$ via duality properties of the symmetric exclusion process and precise estimates of random walk probabilities, thereby proving the Poisson limit.

At this point, one might ask in the symmetric exclusion setting if finding appropriate levels $z$ where $E[N_t]$ has a limit is in itself enough to deduce \eqref{eq:meanminusvarexcl} and therefore a distributional limit for $N_t$ and also for $X_t$, as it would be for the independent particle system.
A main aim of this article is to study this question for the $d>1$ SSEP. 

There are several questions of interest to address.  What are suitable initial ``step'' profiles where tightness levels $z$ can be determined?   How does the strength of correlations depend on dimension and the initial condition, and are there usable bounds?  Is convergence of $E[N_t]$ indeed sufficient for Poisson convergence of $N_t$?  When can one find levels $z$ depending on a parameter so that a Gumbel limit for $X_t$ holds? 

In this article we introduce a notion of a higher-dimensional step initial condition
(see Sections \ref{discussion} and \ref{defn_section}) that leads to a well-defined 
problem.   We observe there will be several types of initial conditions in the $d>1$ context, given by geometric profiles more complex than the half-filled line condition in $d=1$. 
With respect to such initial conditions, when $d \ge 4$, we show 
that convergence of $E[N_t]$ is sufficient to deduce that particle correlations vanish and $N_t$ has a Poisson limit.  In $d = 2,3$, we give an explicit additional geometric condition on the step profile so that this takes place
(Theorem \ref{meta}, Corollary \ref{poissonlimittheorem}).
We conclude therefore, with respect to this class of initial conditions in $d\geq 2$, that the Poisson limit for $N_t$ holds for the symmetric exclusion system when it holds for independent particles

One may attribute these dimension-dependent results, in a certain sense, to less rigidity and more room for exclusion particles to spread apart in higher dimensions, translating to less dependence among particles.
On this point, in our analysis of particle covariances (Proposition \ref{mainbound}), we find quantitative bounds, depending on the initial profile and the dimension $d\geq 2$, that are easier to handle and reveal themselves better in terms of $E[N_t]$, especially when $d\geq 4$. 

A more concrete aim of this work is to identify levels of the form $z = b_t(x+a_t)$ 
via which we may show Gumbel limits for $X_t$. 
Indeed, with respect to a large class of polynomially-shaped initial conditions in $d\geq 2$, we 
verify in Section \ref{gumbelsec} the geometric condition of Corollary \ref{poissonlimittheorem} in $d=2,3$, and thereby show the desired limit in $d\geq 2$; see Theorem \ref{pyramidmain}.

\medskip

\noindent {\bf Organization of the paper.}  In Section \ref{in depth section}, we give a more in-depth account of our results and proof methods, and we list some open problems for the interested reader.  
Section \ref{defn_section} contains formal definitions and collects relevant properties of the symmetric exclusion process.  Precise statements of thee main results are presented in Section \ref{mainresults}, with an application to limit distributions of extreme values given in Section \ref{gumbelsec}. The main proofs are organized into Sections \ref{helping-section}, \ref{mainthmproof}, and \ref{mainboundpf}. Section \ref{rwlemmas} is an Appendix containing staight-forward proofs of various random walk and Gaussian asymptotic results we use in previous sections.

\section{Discussion of results}
\label{in depth section}

In the following subsections, we describe informally what we mean by a ``step profile'' in $d\geq 2$, followed by a more detailed discussion of our results and proof methods.  In the last subsection, we give a few open problems.

\subsection{A step profile in higher dimensions}
\label{discussion}
What is an appropriate analogue of a step initial profile in higher dimensions? While placing a particle initially at every point in a half space is natural in $d=1$, this leads to triviality in $d \ge 2$. Indeed, consider SSEP in $d = 2$, and suppose initially the occupation variables $\{\eta_t(k)\}$ satisfy
$\eta_0(k) = 1(k_1 \le 0)$, for $k = (k_1, k_2) \in \Z^2$. 
That is, we have placed a particle at every lattice point on and to the left of the $k_2$-axis. 

We may ask how many particles $N_t$, when projected onto the $k_1$-direction, have moved beyond a value $z > 0$ at a finite time $t$. Of course this is infinite, as the first of infinitely many independent Exponential$(1)$ clocks will ring instantaneously. To be more precise, consider the ``stirring'' representation of $N_t$, namely  
$N_t = \sum_{k\in \Z^2, k_1\leq 0}1(\xi_k(t) \cdot (1,0) > z)$.
	
Here, $\{\xi_k(t) : k \in \Z^2\}$ is a collection of dependent random walks such that marginally
each $\xi_k(\cdot)$ is a simple random walk on $\Z^2$ with $\xi_k(0)=k$, and its projection on the $k_1$-direction does not depend on its $k_2$ value (see Section \ref{stir} for this construction). 
 Letting $e_1 = (1,0)$, we have 
\[
	E[N_t] = \sum_{k \in \Z^2, k_1\le0} P(\xi_k(t) \cdot e_1 > z) = \sum_{k_1 \le 0} \sum_{k_2 \in \Z} P(\xi_{(k_1,0)}(t) \cdot e_1  > z) = \infty. 
\]
. 
As discussed in more detail in Section \ref{Rayleigh}, the indicator variables
$\{1(\xi_k(t) \cdot e_1 \le z) : k \in \Z^2\}$ are negatively associated for each $t$. Then for the position $X_t$ of the right-most (in the $k_1$-direction) particle, 
\begin{align*}
	P(X_t \le z) = P(N_t = 0) &= P(\xi_k(t) \cdot e_1 \le z \;\,\mbox{for all}\;\, k \;\,\mbox{with}\; \, k_1 \le 0) \\
	&\le \prod_{k \in \Z^2, k_1 \le 0} P(\xi_k(t) \cdot e_1 \le z) \\
	&= \exp \Big( \sum_{k \in \Z^2, k_1 \le 0} \log ( 1 - P(\xi_k(t) \cdot e_1  > z) ) \Big) \le \exp\left( - E[N_t] \right) = 0. 
\end{align*}
It follows that $P(X_t = \infty) = 1$ for any positive $t$. 

We construct initial conditions for which $E[N_t] < \infty$ by placing particles in subsets of the half space $\sH_d = \{k \in \Z^d : k_1 \le 0\}$ determined by ``shape'' functions $g_2, \ldots, g_d : [0,\infty) \to [0, \infty)$, namely 
\[
	\eta_0(k) = 1(k \in \sH_d : |k_i| \le g_i(-k_1)\;\,\mbox{for}\;\, i = 2, \ldots, d). 
\]
An example of such an initial profile in $d = 2$ is depicted in Figure \ref{fig:generalprofile} (a). 
A more degenerate case allowed is when $g_i \equiv 0$ for all $i$, corresponding to a single line of particles at points $(k_1, 0, \ldots, 0)$, $k_1 \le 0$.

Under conditions on $\{g_i\}$ (see Section \ref{defn_section}) and the appropriate scaling $z$, 
\begin{equation}\label{eq:ENasympF}
	E[N_t] \asymp \sum_{j \ge 0} \prod_i g_i(j) P(\zeta_{t/d} > z + j) \asymp E\Big[ \int_0^{(\zeta_{t/d} - z)_+} \prod_i g_i(u)\,du\Big] , 
\end{equation}
where $\zeta_t$ is a simple symmetric random walk on $\Z$ starting from $0$ (Lemma \ref{meanasymp}). Thus $P(X_t \in \Z) = 1$ for finite $t$ whenever $E[\int_0^{(\zeta_t)_+}\prod_ig_i(u)\,du] < \infty$. 
We briefly illustrate the case of linear shape functions $\{g_i\}$ the next section.

Under these initial conditions, there is always an infinite number of particles in the system, so that the appropriate scaling $z$ for $X_t$ will be superdiffusive.  As in $d=1$, the behavior of $X_t$ is beyond the diffusive scale of ``bulk'' particle mass hydrodynamics.

\begin{figure}[t]
\captionsetup{width=.95\linewidth}
\centering
\hspace*{\fill}%
\begin{subfigure}[b]{0.3\textwidth}
\centering
\begin{tikzpicture}[scale=0.7]

\draw[->] (-6.5,0) -- (1,0) node[right] {\footnotesize $k_1$};
\draw[->] (0,-3.5) -- (0,3.5) node[above] {\footnotesize $k_2$};

\draw[scale=0.5, domain=-11.75:0, smooth, variable=\x, thick] plot ({\x},{0.04*\x*\x + 0.25*sin(deg(\x-1.5)) +  1.8});
\draw[scale=0.5, domain=-11.75:0, smooth, variable=\x, thick] plot ({\x},{-0.04*\x*\x - 0.25*sin(deg(\x-1.5)) -  1.8});

\foreach \i in {-6.5,-6,...,0}{
	\node[draw,circle,inner sep=1.5pt] at (\i,0) {};
	\node[draw,circle,inner sep=1.5pt] at (\i,-0.5) {};
	\node[draw,circle,inner sep=1.5pt] at (\i,0.5) {};
	}

\foreach \i in {-6.5,-6,...,-1}{
	\node[draw,circle,inner sep=1.5pt] at (\i,1) {};
	\node[draw,circle,inner sep=1.5pt] at (\i,-1) {};
	}

\foreach \i in {-6.5,-6,...,-3}{
	\node[draw,circle,inner sep=1.5pt] at (\i,1.5) {};
	\node[draw,circle,inner sep=1.5pt] at (\i,-1.5) {};
	}

\foreach \i in {-6.5,-6,...,-4}{
	\node[draw,circle,inner sep=1.5pt] at (\i,2) {};
	\node[draw,circle,inner sep=1.5pt] at (\i,-2) {};
	}

\foreach \i in {-6.5,-6,...,-4.5}{
	\node[draw,circle,inner sep=1.5pt] at (\i,2.5) {};
	\node[draw,circle,inner sep=1.5pt] at (\i,-2.5) {};
	}

\foreach \i in {-6.5,-6,...,-5}{
	\node[draw,circle,inner sep=1.5pt] at (\i,3) {};
	\node[draw,circle,inner sep=1.5pt] at (\i,-3) {};
	}
	
\foreach \i in {-6.5,-6}{
	\node[draw,circle,inner sep=1.5pt] at (\i,3.5) {};
	\node[draw,circle,inner sep=1.5pt] at (\i,-3.5) {};
	}

\end{tikzpicture}
\caption{}
\end{subfigure}\hfill%
\begin{subfigure}[b]{0.3\textwidth}
\centering
 \tdplotsetmaincoords{68.444444}{-344.3333}
\begin{tikzpicture}[x=1cm,y=1cm,z=1cm,scale=1, tdplot_main_coords,scale=0.75]
	\pgfmathsetmacro{\a}{-3}
	\pgfmathsetmacro{\bs}{0.75}
	\pgfmathsetmacro{\bl}{2.5}
	\coordinate (A1) at (0,\bs,\bs) {};
	\coordinate (A2) at (0,\bs,-\bs) {};	
	\coordinate (A3) at (0,-\bs,-\bs) {};	
	\coordinate (A4) at (0,-\bs,\bs) {};	
	\coordinate (B1) at (\a,\bl,\bl) {};
	\coordinate (B2) at (\a,\bl,-\bl) {};	
	\coordinate (B3) at (\a,-\bl,-\bl) {};	
	\coordinate (B4) at (xyz cs:x=\a,y=-\bl,z=\bl) {};

	\draw[-,thick] (A1) -- (A2) -- (A3) -- (A4) -- cycle;
	\draw[-, thick] (B1) -- (B2) -- (B3) -- (B4) -- cycle;
	\draw[-, thick] (A1) -- (B1);
	\draw[-,thick] (A2) -- (B2);
	\draw[-,thick] (A3) -- (B3);	
	\draw[-,thick] (A4) -- (B4);

	\draw[->] (xyz cs:x=-4.5) -- (xyz cs:x=1) node[right] {\footnotesize $k_1$};
	\draw[->] (xyz cs:z=-3.5) -- (xyz cs:z=3.5) node[above] {\footnotesize $k_2$};
	\draw[->] (xyz cs:y=-3.5) -- (xyz cs:y=3.5) node[above] {\footnotesize $k_3$};

	\foreach \i in {-2.5,-2,...,2.5}{
		\foreach \j in {-2.5,-2,...,2.5}{
			\node[draw,circle,inner sep=1.5pt] at (\a,\i,\j) {};
		}
	}
	
	\foreach \i in {-2,-1.5,...,2}{
		\foreach \j in {-2,-1.5,...,2}{
			\node[draw,circle,inner sep=1.5pt] at (\a+0.5,\i,\j) {};
		}
	}

	\foreach \i in {-1.5,-1,...,1.5}{
		\foreach \j in {-1.5,-1,...,1.5}{
			\node[draw,circle,inner sep=1.5pt] at (\a+1,\i,\j) {};
		}
	}

	\foreach \i in {-1.5,-1,...,1.5}{
		\foreach \j in {-1.5,-1,...,1.5}{
			\node[draw,circle,inner sep=1.5pt] at (\a+1.5,\i,\j) {};
		}
	}

	\foreach \i in {-1,-0.5,...,1}{
		\foreach \j in {-1,-0.5,...,1}{
			\node[draw,circle,inner sep=1.5pt] at (\a+2,\i,\j) {};
		}
	}

	\foreach \i in {-1,-0.5,...,1}{
		\foreach \j in {-1,-0.5,...,1}{
			\node[draw,circle,inner sep=1.5pt] at (\a+2.5,\i,\j) {};
		}
	}

	\foreach \i in {-0.5,0,0.5}{
		\foreach \j in {-0.5,0,0.5}{
			\node[draw,circle,inner sep=1.5pt] at (\a+3,\i,\j) {};
		}
	}

	\foreach \i in {-2.5,-2,...,2.5}{
		\foreach \j in {-2.5,-2,...,2.5}{
			\node[draw,circle,inner sep=1.5pt] at (\a-0.5,\i,\j) {};
		}
	}

	\foreach \i in {-2.5,-2,...,2.5}{
		\foreach \j in {-2.5,-2,...,2.5}{
			\node[draw,circle,inner sep=1.5pt] at (\a-1,\i,\j) {};
		}
	}

\end{tikzpicture}
\caption{}
\end{subfigure}%
\hspace*{\fill}%
\caption{\small (a) An arbitrary initial profile in $\Z^2$ determined by a nonnegative ``shape'' function. (b) An initial profile in $\Z^3$ determined by two linear functions.}\label{fig:generalprofile}
\end{figure}
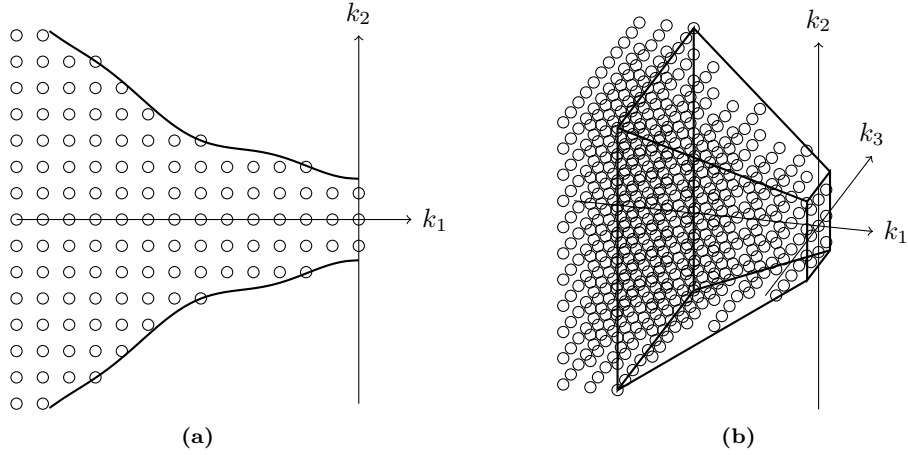

\subsection{Overview of results}\label{resultsoverview}
As alluded to in the Introduction, to investigate the extent to which convergence of $E[N_t]$ is enough for Poisson convergence of $N_t$, given initial shape functions $\{g_i\}$, we seek upper bounds on the quantities 
\[
	\sS_t(\{g_i\},z) = \underset{k_1 > z}{\sum_{k \in \Z^d} }(E[\eta_t(k)])^2 \qquad \mbox{and} \qquad \sC_t(\{g_i\},z) = - 2\underset{j_1,k_1 > z}{\sum_{\{j,k\} \subset \Z^d}} \Cov(\eta_t(j), \eta_t(k))
\]
that vanish in the $t\to\infty$ limit whenever $\sup_t E[N_t] < \infty$. (The sum of these objects equals $E[N_t]-\Var(N_t)$ as in the one dimensional case \eqref{eq:meanminusvarexcl}; see the later discussion in Section \ref{Rayleigh}.)

To this end, we obtain bounds on $\sS_t(\{g_i\},z)$ and $\sC_t(\{g_i\},z)$ (Propositions \ref{ssbound}, \ref{mainbound}) for nondecreasing, continuously differentiable functions $\{g_i\}$ whose derivatives satisfy a regularity condition that uniformly limits the variation of the functions on unit intervals
(Conditions \ref{nondecrcond} and \ref{covcond} in Section \ref{defn_section}). 
These assumptions are not overly restrictive on allowable initial profiles due to the monotonicity 
\[
	\sS_t(\{g_i\},z) + \sC_t(\{g_i\},z) \le \sS_t(\{f_i\},z) + \sC_t(\{f_i\},z), 
\]
when $g_i \le f_i$ for each $i$ (Lemma \ref{monotoneerror}). So, less well-behaved initial profiles can also be analyzed by considering suitable nondecreasing, differentiable dominating functions  (see Remark \ref{mainremark}). 

While these bounds hold for a large class of profiles, 
they become useful when the growth of the shape functions 
is further limited to being sub-exponential (Condition \ref{smallderiv}). With this added condition, we show that $\sS_t(\{g_i\},z), \sC_t(\{g_i\},z) \to 0$ in large generality when $d \ge 4$. In dimensions $2$ and $3$, additional consideration of the scaling $z$ and the shape $\{g_i\}$ is required; see Theorem \ref{meta}.

More precisely, when in addition the logarithmic derivative $(\prod_i g_i)'/\prod_i g_i$ vanishes at infinity, then via \eqref{eq:ENasympF}, Proposition \ref{ssbound}, and Corollary \ref{Ccor}, the condition $\sup_t E[N_t] < \infty$ implies that
\[
	\sS_t(\{g_i\},z) = O\Big(E\Big[ \prod_i g_i(\zeta_{t/d} - z)1(\zeta_{t/d} > z) \Big] \Big) \to 0 \qquad\text{as}\qquad t\to\infty. 
\]

The analysis of $\sC_t(\{g_i\},z)$ is more involved than that of $\sS_t(\{g_i\},z)$. 
In Corollary \ref{Ccor}, when Conditions (A)--(C) hold and $\sup_tE[N_t] < \infty$, we find
\begin{equation}\label{eq:introC}
\begin{aligned}
	\sC_t(\{g_i\},z) &= O\Big( \gamma_d(t) \Big( E\Big[ \prod_i g_i(\zeta_{t/d} - z)1(\zeta_{t/d} > z) \Big]  \Big)^2 \\
	&\qquad\quad + \gamma_{d+1}(t) E\Big[ \prod_i g_i(\zeta_{t/d} - z)1(\zeta_{t/d} > z) \Big]  \Big), 
\end{aligned}
\end{equation}
as $t \to \infty$, 
where 
\begin{align}
\label{gamma-d}
	\gamma_d(t) = \left.\begin{cases} \sqrt{t} &\;\text{if}\;\, d=2 \\ \log t &\;\text{if}\;\,  d=3 \\ 1 &\;\text{if}\;\,  d \ge 4 \end{cases} \right\}
	\asymp \int_0^t P(\zeta_s^{(d-1)} = 0 | \zeta_0^{(d-1)} = 0 )\,ds.
\end{align}
Above, $\zeta^{(d-1)}$ is a simple random walk on $\Z^{d-1}$, and so $\gamma_d(t)$ is the order of expected number of returns to $0$ for the random walk in dimension $d-1\geq1$. The $d-1$ dimensions correspond to those in which movement of particles does not affect their positions projected onto the first coordinate. Thus higher dimensions yield sharper bounds on $\sC_t(\{g_i\},z)$: In some sense, recurrence in $d-1 \in \{1,2\}$ means a lack of space for particles to spread apart. Hence when $\sC_t(\{g_i\},z) \to 0$, it does so at a slower rate of convergence in these dimensions. 

Although these estimates on $\sS_t(\{g_i\}, z)$ and $\sC_t(\{g_i\}, z)$ are general, their application with respect to a shape $\{g_i\}$ depends on finding a level $z$ so that the limit of $E[N_t]$ exists.  In $d=2,3$, vanishing of the second term in the bound \eqref{eq:introC} will follow from finiteness of $E[N_t]$ and Conditions (A)--(C), but the vanishing of the first term, stated in \eqref{eq:d=2,3cond} of Theorem \ref{meta}, may be thought of as an additional condition on $\{g_i\}$ for the Poisson limit of $N_t$ to hold. In $d\geq 4$ it holds automatically as \eqref{eq:introC} is on the same order as $\sS_t(\{g_i\},z)$, which vanishes regardless. 

Nevertheless, for a class of polynomial shapes $\{g_i\}$, we are able to identify $z$ (depending on a parameter $x$---see Section \ref{gumbelsec}) and the associated convergences (Theorem \ref{pyramidmain}) in all $d\geq 2$.  To illustrate these results,
we discuss here the example of linear shape functions $g_i(u) = c_iu + r_i$ with $c_i > 0$, $r_i \ge 0$ (this is depicted for $d = 3$ in Figure \ref{fig:generalprofile} (b)). 

In this case, \eqref{eq:introC} reduces to 
\begin{align*}
	\sC_t(\{g_i\},z) &= 
	O\big( \gamma_d(t) ( E[ (\zeta_{t/d} - z)_+^{d-1} ] )^2 \big), \qquad t\to\infty. 
\end{align*}
Indeed, in this example, \eqref{eq:ENasympF} becomes $E[N_t] \asymp E[(\zeta_{t/d} - z)_+^d]$. Thus from an application of H\"older's inequality,
\begin{align*}
	E[(\zeta_{t/d} - z)_+^{d-1}] &= E[(\zeta_{t/d} - z)_+^{d-1}1(\zeta_{t/d} > z)] \\
	&\le ( E[(\zeta_{t/d} - z)_+^d] )^{1 - 1/d}P(\zeta_{t/d} > z)^{1/d} = O(P(\zeta_{t/d} > z)^{1/d}), 
\end{align*}
when $\sup_t E[N_t] < \infty$. 
It turns out that, as in $d=1$, the order of $z$ necessary for $E[N_t] = O(1)$ is again at least $\sqrt{t\log t}$ (see the discussion at the end of Section \ref{pfmethods}). This means $P(\zeta_{t/d} > z) = o(1)$. Moreover, a consideration of the second order terms of $z$ shows 
$\sC_t(\{g_i\},z) = O( \gamma_d(t) P(\zeta_{t/d} > z)^{2/d} ) \to 0$
for any $d \ge 2$ (cf. Lemma \ref{errorrates}).

As alluded to before, the strong Rayleigh property of SSEP means that $\lim_{t\to\infty} E[N_t] = \lambda \in (0,\infty)$ and $\sS_t(\{g_i\},z) + \sC_t(\{g_i\},z) \to 0$ is sufficient for $N_t \Rightarrow \text{Poisson}(\lambda)$. When
 $g_i(u) = c_i u + r_i$, and for a fixed parameter $x\in\R$, 
 the scaling $z = z(t, x)$ and limit $\lambda = \lambda(x)$ can be found, which implies the Gumbel limit 
\begin{equation}\label{eq:linearGumbel}
\begin{aligned}
	&\lim_{t\to\infty} P\Big( X_t\sqrt{\frac{\log t}{t}} - \log t + \frac{d+1}{2d}\log \log t + \frac{\log 2\pi}{2d}
	\le x \Big)\\ 
&= \exp\Big( - \frac{(d-1)! \prod_i (2c_i)}{d^{d+{1/2}}} e^{-dx} \Big) 
\end{aligned} 
\end{equation}
for the maximal particle position $X_t$. 
 This is part of Theorem \ref{pyramidmain}, which more generally considers profiles where $g_i$ is a polynomial of arbitrary order. 
Additionally, we obtain the limit distributions of all order statistics of the process.

Notably, the right hand side of \eqref{eq:linearGumbel} does not depend on the intercepts $r_i$. An interesting consequence is that by varying these values, we may add an infinite number of particles to the initial system without changing the asymptotics (see also Remark \ref{polyremark} about omitting particles periodically from the initial profile). Generally, the limit of $E[N_t]$ (and thus the limit distribution of $X_t$) in all cases will depend only on the leading order behavior of $\prod_i g_i(u)$ as $u \to \infty$; see \eqref{eq:ENasympF} and the discussion in Remark \ref{mainremark}.

\subsection{Proof methods}\label{pfmethods}

 Using the {\em self-duality} of SSEP, along with the related stirring construction mentioned previously, the asymptotic analysis of $N_t$ in any dimension reduces to that of a single continuous time random walk $\zeta_t$ on $\Z$ starting at $0$. Informally,
two Markov processes are {\em dual} if certain expectations of functionals of one can be written as analogous expectations of the other (we refer to \cite{JanKur14} for a general treatment). For our purposes, SSEP is {\em self-dual} because moments of occupation variables can be expressed as expectations in terms of particle positions. More precisely, when initially particles are placed according to $\eta \in \{0,1\}^{\Z^d}$, 
\begin{equation}\label{eq:selfduality}
	E[\eta_t(x_1) \cdots \eta_t(x_n) | \eta_0 = \eta] = E[\eta(Y_1(t))\cdots \eta(Y_n(t)) | Y(0) = (x_1, \ldots, x_n)], 
\end{equation}
where $Y(t) = (Y_1(t), \ldots, Y_n(t))$ gives the positions of particles in an $n$-particle exclusion system \cite[Theorem VIII.1.1]{LigBook05}. Note that when $n = 1$, the single-particle system $Y_1(t)$ behaves like a random walk. 

To bound $\sC_t(\{g_i\},z)$, we apply this duality to analyze the ``two-point'' functions 
\begin{equation}\label{eq:covduality}
	-\Cov(\eta_t(x), \eta_t(y)) = E[\eta_t(x)]E[\eta_t(y)] - E[\eta_t(x)\eta_t(y)]. 
\end{equation}
Along with the negative association properties of SSEP, the following Lemma \ref{covintrep} serves as our starting point. Its proof mirrors that of \cite[Lemma 3.4]{ConSet23}, and is postponed until Section \ref{helping-section}.

Here and throughout, let $e_j$, $1 \le j \le d$, denote the $j$th standard basis vector in $\Z^d$, namely the vector with $1$ as the $j$th component and $0$ in the remaining components. 

\begin{lemma}\label{covintrep} For any initial profile $\eta \in \{0,1\}^{\Z^d}$ and any $z \in \R$, 
\[
	\sC_t(\eta, z) \le \frac{1}{d} \sum_{j \in \Z^d} \sum_{k=1}^d \int_0^t \left( E_j[\eta(\zeta^{(d)}_s)] - E_{j + e_k}[\eta(\zeta^{(d)}_s)] \right)^2P_j(\zeta^{(d)}_{t-s} \cdot e_1 \ge z)^2\,ds, 
\]
where 
$\zeta^{(d)}_t$ is a continuous time simple random walk on $\Z^d$ with $P_j(\cdot) = P(\cdot | \zeta^{(d)}_0 = j)$. 
\end{lemma}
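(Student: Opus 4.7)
The plan is to combine self-duality of SSEP with a Duhamel comparison between the two-particle exclusion semigroup and that of two independent random walks, and then invoke negative association to pass from the resulting two-particle probabilities to single-walk quantities. Writing $\rho_s(a) := E_a[\eta(\zeta^{(d)}_s)]$ and letting $(Y_1,Y_2)$ denote the two-particle exclusion process started from $(x,y)$, self-duality \eqref{eq:selfduality} yields
\[
-\Cov(\eta_t(x),\eta_t(y)) = \rho_t(x)\rho_t(y) - E^{\mathrm{excl}}_{(x,y)}\bigl[\eta(Y_1(t))\eta(Y_2(t))\bigr].
\]
Because $\rho_s$ satisfies the discrete heat equation on $\Z^d$, the factorized function $g_r(a,b):=\rho_r(a)\rho_r(b)$ obeys $\partial_r g_r = L^{\mathrm{ind}}g_r$, where $L^{\mathrm{ind}}$ is the generator of two independent continuous-time simple random walks. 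Duhamel's formula, relative to the two-particle exclusion semigroup $P^{\mathrm{excl}}_s$ with generator $L^{\mathrm{excl}}$, then gives
\[
-\Cov(\eta_t(x),\eta_t(y)) = \int_0^t P^{\mathrm{excl}}_s\bigl[(L^{\mathrm{ind}}-L^{\mathrm{excl}})g_{t-s}\bigr](x,y)\,ds.
\]

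The next step is to evaluate the generator difference. Since $L^{\mathrm{ind}}$ and $L^{\mathrm{excl}}$ coincide away from nearest-neighbor pairs, $(L^{\mathrm{ind}}-L^{\mathrm{excl}})h$ is supported on $\{(x,x+e):|e|=1\}$, and a direct computation applied to $h=g_{t-s}$ produces the quadratic source
\[
(L^{\mathrm{ind}}-L^{\mathrm{excl}})g_{t-s}(x,x+e) = \tfrac{1}{2d}\bigl[\rho_{t-s}(x+e)-\rho_{t-s}(x)\bigr]^2.
\]
Plugging in, summing $-\Cov(\eta_t(x),\eta_t(y))$ over unordered pairs $\{x,y\}$ with $x_1,y_1>z$, interchanging summations, and using reversibility of the two-particle exclusion semigroup, the inner sum becomes $P^{\mathrm{excl}}_{(a,a+e)}(Y_1(s)\cdot e_1 > z,\,Y_2(s)\cdot e_1 > z)$. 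A change of variable $s\mapsto t-s$ together with the $\pm e_k$ symmetry (implemented by a shift reindexing) then converts $\sum_{|e|=1}$ into $2\sum_{k=1}^d$, turning the prefactor $\tfrac{1}{2d}$ into $\tfrac{1}{d}$.

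Finally, I would invoke the strong Rayleigh / negative association property of SSEP (Subsection~\ref{Rayleigh}) applied to the two-particle system, yielding
\[
P^{\mathrm{excl}}_{(a,a+e_k)}\bigl(Y_1(t-s)\cdot e_1 > z,\,Y_2(t-s)\cdot e_1 > z\bigr) \le P_a(\zeta^{(d)}_{t-s}\cdot e_1 > z)\,P_{a+e_k}(\zeta^{(d)}_{t-s}\cdot e_1 > z),
\]
where each factor is identified with a single-walk probability via the stirring representation. For $k\ge 2$ the two factors coincide and the bound $P_a(\zeta^{(d)}_{t-s}\cdot e_1\ge z)^2$ is immediate; for $k=1$, the small discrepancy between starting at $a$ versus $a+e_1$ is absorbed by relaxing the strict inequality $>z$ to $\ge z$, producing the stated form. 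The main obstacle I anticipate is precisely this final reconciliation: applying the negative-association inequality cleanly and carrying the $k=1$ shift through the sum so that the single, $j$-indexed factor $P_j(\cdots\ge z)^2$ in the lemma is recovered. The rest of the argument is structurally parallel to the proof of Lemma~3.4 of \cite{ConSet23}.
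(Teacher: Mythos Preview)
Your proposal is correct and follows essentially the same route as the paper: self-duality, the Duhamel (integration-by-parts) identity between the two-particle exclusion and independent-walk semigroups, the computation of the generator difference as the squared discrete gradient of $\rho_s$, symmetry/reversibility of $V_2$ to transfer the semigroup onto the indicator, and then the inequality $V_2\le U_2$ on the symmetric positive-definite function $1(x\cdot e_1,y\cdot e_1>z)$. The only cosmetic point is that the relevant tool in Subsection~\ref{Rayleigh} is \eqref{eq:semigroupinequality} (equivalently Lemma~\ref{stirringcor}) rather than the strong Rayleigh property per se; your Duhamel ordering $P^{\mathrm{excl}}_s(\cdot)g_{t-s}$ is the paper's $V_2(t-s)(\cdot)U_2(s)$ after the change of variable you already note.
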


We remark that the bound in Lemma \ref{covintrep} holds in all $d\geq 1$, and was stated in Lemma 3.4 of \cite{ConSet23} when $d=1$.

In the proof of Proposition \ref{mainbound} given in Section \ref{mainboundpf}, we use the decomposition of the components of $\zeta_t$ as independent simple random walks on $\Z$ with jumps at rate $1/d$ to obtain bounds on $\sC_t(\{g_i\},z)$ in terms of the one-dimensional walk $\zeta_t = \zeta_t^{(1)}$. Under our assumptions on $\{g_i\}$, this results in bounds on $\sS_t(\{g_i\},z)$ and $\sC_t(\{g_i\},z)$ given in terms of quantities of the form 
$E[H(\zeta_t - z)1(\zeta_t > z)]$
for a specified nonnegative, nondecreasing function $H$ depending on $\{g_i\}$. Recalling \eqref{eq:ENasympF}, $E[N_t]$ is also expressed in this form with $H(v) = \int_0^v h(u)\,du$ for $h$ depending on $\{g_i\}$. In the Appendix (Section \ref{rwlemmas}), we list several results concerning such random walk functionals, whose proofs are straightforward.

In particular, Lemma \ref{rw2normal} states that if $H$ is differentiable with $H'$ bounded by a polynomial, $z = o(t^{2/3})$, and $X$ is standard Gaussian, 
\begin{equation}\label{eq:rwlim=gausslim}
	\lim_{t\to\infty} E[H(\zeta_t - z)1(\zeta_t > z)] = \lim_{t\to\infty} E[H(\sqrt{t} X - z)1(\sqrt{t}X > z)], 
\end{equation}
when the limits
exist. Reduction to asymptotics related to the Gaussian distribution is convenient, since in general evaluating the left hand side limit is nontrivial.

For general profiles, evaluating the level $z$ and the limit of $E[N_t]$ when $\sup_tE[N_t]<\infty$ is difficult, even when relations such as \eqref{eq:rwlim=gausslim} hold. 
Nevertheless, for the class of polynomial shape functions $\{g_i\}$ in Section \ref{gumbelsec}, we may evaluate
\begin{equation}\label{eq:formofEN}
	\lim_{t\to\infty} E[N_t] = \lim_{t\to\infty} E\Big[ \int_0^{(\zeta_{t/d} - z)_+}h(u)\,du \Big],
\end{equation}
using Gaussian approximations. 

The $z = o(t^{2/3})$ assumption for \eqref{eq:rwlim=gausslim} is so that certain random walk tail probabilities that show up in the proof lie in the regime of classical large deviation results. 
In the case that $\{g_i\}$ have polynomial growth, $h$ in \eqref{eq:formofEN} also has polynomial growth (and therefore so does $H(u) = \int_0^u h(v)\,dv$).  A consequence is that the growth of $z$ is limited.

Indeed, when $\sup_t E[N_t]<\infty$, we will observe necessarily $t^{-1/2}z\to \infty$ (Remark \ref{z/sqrtt}).
Further, 
suppose that $z \ge \sqrt{c t \log t}$ for sufficiently large $t$ and consider
$h(u) = u^{\beta-1}$ in \eqref{eq:formofEN}, corresponding to $\{g_i(u)\asymp u^{\alpha_i}\}$ and $\beta = 1+ \sum_{i=2}^d \alpha_i \ge 1$. We have 
by Cauchy-Schwarz, the Burkholder-Davis-Gundy bound $E[\zeta_t^{2\beta}] = O(t^\beta)$, and a standard moderate deviation estimate $P(\zeta_{t/d} > \sqrt{ct\log t})=O(t^{-cd/2})$ (see, e.g., \cite[Lemma A.3]{ConSet23}) that for large enough $t$,
\begin{align*}
	E[N_t] \asymp E[(\zeta_{t/d} - z)_+^\beta] &\le E[(\zeta_{t/d} - \sqrt{ct\log t})_+^\beta] \\ 
	&\le (E[\zeta_{t/d}^{2\beta}])^{1/2} P(\zeta_{t/d} > \sqrt{ct\log t})^{1/2}
	= O( t^{(1/2)(\beta - cd/2)}). 
\end{align*}
Thus $E[N_t] \to 0$ if $c > 2\beta/d$. This means that
a nontrivial weak limit for $N_t$ (and $X_t$) requires $z = O(\sqrt{t\log t})$. As we see from Theorem \ref{pyramidmain}, 
 to first order the appropriate scaling is $z \sim \sqrt{(\beta/d) t \log t}$.

\subsection{Open problems}
\label{open problems}

The thrust of the paper is to identify a ``step'' profile setting, sufficient conditions, and sufficient bounds to obtain Poisson limits of $N_t$ to match those present in the analogous independent system. 
In this context, 
we mention some questions of interest arising from our study, which probe further the structure of the level $z$ and the correlations present, for possible later development.  

\medskip

1. We identify in Section \ref{gumbelsec} the scaling $z$ so that $E[N_t]$ converges to a $\lambda>0$ for polynomial shapes $\{g_i\}$.  However, the class specified in our Conditions (A), (B), and (C) below allows shapes with growth between polynomial and exponential.
The appropriate scaling sequence $z$ so that $E[N_t]$ converges for these subexponential shapes is unknown. 
For example, can \eqref{eq:rwlim=gausslim} be leveraged? In this context, we wonder if the growth of $z$ can exceed $O(\sqrt{t\log t})$, the order for polynomial shapes. 

A back-of-the-envelope calculation suggests that $E[\exp(\zeta_t - z)1(\zeta_t > z)] = O(1)$ requires that $z$ is on the large deviation scale $z \asymp t$. A natural question, then, is whether the correct scaling for shapes between polynomial and exponential interpolates between $\sqrt{t\log t}$ and $t$, or whether the change is sharp. 
This question is relevant also for a system of independent particles, where the calculation of the limit $E[N_t]$ is exactly the same as in the symmetric exclusion processes.

\medskip
 
2. 
As mentioned earlier, the vanishing of the first term in \eqref{eq:introC} as $t \to \infty$ may be thought of as an additional condition (beyond boundedness of $E[N_t]$ and (A)--(C)) on the shape $\{g_i\}$ in $d=2,3$ so that the correlations vanish (see \eqref{eq:d=2,3cond} in Theorem \ref{meta}). 
In $d \ge 4$ (cf. Corollary \ref{Ccor}) or when $\{g_i\}$ are polynomials (cf. Lemma \ref{errorrates}), this condition is already implied by the others. 
It would be of interest to investigate the necessity of this additional requirement in $d=2,3$ in this light.

\section{Definitions and preliminaries}
\label{defn_section}

Let $\{\eta_t : t \ge 0\}$ be a symmetric, nearest-neighbor, translation-invariant exclusion process on $\Z^d$ for $d \ge 2$. Namely, $\eta_t$ is the process taking values in $\{0,1\}^{\Z^d}$ with Markov generator 
\[
	\sL_d f(\eta) = \frac{1}{2d} \sum_{x \in \Z^d} \sum_{|y-x| =1}  \eta(x)(1 - \eta(y)) \left( f(\eta^{x,y}) - f(\eta) \right), 
\]
for functions $f(\eta)$ that depend on $\eta(x)$ for finitely-many $x \in \Z^d$, and where $\eta^{x,y}(x) = \eta(y)$, $\eta^{x,y}(y) = \eta(x)$, and $\eta^{x,y}(u) = \eta(u)$ otherwise. 
For an initial condition $\eta \in \{0,1\}^{\Z^d}$, we denote by $\P_\eta$ the probability measure under which $\eta_0 = \eta$. The corresponding expectation operator is denoted $\E_\eta$. 

We introduce the quantities of interest as follows. 
For fixed $d\ge 2$, define the set 
\[
	\sP_t = \{k \in \Z : \eta_t(k, x_2, \ldots, x_d) = 1 \;\,\text{for some}\;\, (x_2, \ldots, x_d) \in \Z^{d-1}\}. 
\]
$\sP_t$ is the collection of points $k \in \Z$ for which a particle exists at time $t$ in the affine hyperplane orthogonal to $e_1$ and passing through the point $ke_1$ (recall that $e_j$ denotes the $j$th standard basis vector in $\Z^d$). 
Then, 
\[
	X_t = \max \sP_t
\]
gives the maximal particle position in the $e_1$ direction. The initial conditions we consider will ensure that this maximum exists and also that $\inf \sP_t = -\infty$ for all $t\ge0$, that is, that there are an infinite number of particles in the system.
So, we can define 
\begin{equation}\label{eq:orderstatdef}
	\cdots \le X_t^{(3)} \le X_t^{(2)} \le X_t^{(1)} \le X_t^{(0)} = X_t, 
\end{equation}
the order statistics of $\sP_t$.   
Note that $X_t$ may not correspond to a unique particle, and $X_t^{(m)} = X_t^{(l)}$ is possible for every pair $(m,l)$. To have $X_t < \infty$, we restrict ourselves to initial profiles $\eta \in \{0,1\}^{\Z^d}$ for which $\eta(x) = 0$ when $x$ lies outside the halfspace $\sH_d = \{x \in \Z^d : x_1 \le 0\}$. Throughout it is understood that for $x \in \Z^d$, $x_1, x_2, \ldots, x_d$ denote its component values. 

As mentioned previously, analysis of the processes $X_t^{(m)}$ is based on the related quantity 
\begin{equation}\label{eq:Ndefinition}
	N_t = N_t(z) = \underset{x_1 > z}{\sum_{x \in \Z^d}} \eta_t(x), 
\end{equation}
which counts the number of points in $\sP_t$ to the right of $z = z(t)$. (We will suppress $z$ from the notation and write $N_t$ for $N_t(z)$.) As mentioned in the Introduction, the relationship between this quantity and $X_t$ is given by 
$\{X_t \le z\} = \{N_t = 0\}$, 
i.e., the largest value in $\sP_t$ is at most $z$ if and only if no points in $\sP_t$ lie to the right of $z$. More generally, we have 
\begin{equation}\label{eq:orderstatNrel}
	\{X_t^{(m)} \le z\} = \{N_t \le m\}. 
\end{equation}

As discussed above, we construct initial profiles for which $N_t < \infty$.
For functions $g_2, \ldots, g_d : \R_+ \to \R_+$, where $\R_+ = [0, \infty)$, let 
\[
	\sR_{\{g_i\}_{i=2}^d} = \{x \in \sH_d : |x_i| \le g_i(-x_1)\;\,\mbox{for}\;\, i = 2, \ldots, d\}, 
\]
and define $\eta_{\{g_i\}} = \eta_{\{g_i\}_{i=2}^d} \in \{0,1\}^{\Z^d}$ by 
\[
	\eta_{\{g_i\}_{i=2}^d}(x) = 1(x \in \sR_{\{g_i\}_{i=2}^d}), \qquad x \in \Z^d. 
\]
For such an initial profile, we use the shorthand $\P_{\{g_i\}} = \P_{\eta_{\{g_i\}}}$ and $\E_{\{g_i\}} = \E_{\eta_{\{g_i\}}}$. Note that with these definitions, $X_0 = 0$ and more generally $X_0^{(m)} \in [-m,0]$ for all $m \ge 0$, $\P_{\{g_i\}}$-a.s.

Below we collect the various assumptions we will make on the shape functions. 

\begin{conditions} 
Let $g:\R_+\rightarrow\R_+$ be a function.
\begin{enumerate}[(A)]

\item\label{nondecrcond} $g$ is nondecreasing.

\item\label{covcond} $g$ is continuously differentiable and  
$$
\sup_{m \in \Delta(g)} \frac{|g(m) - g(m-1)|}{g'(m)} < \infty,$$
where $\Delta(g) = \{m \in \{1, 2, \ldots\} : g(m) \ne g(m-1)\}$.

\item\label{smallderiv} $g$ is continuously differentiable with 
$$\lim_{u \to \infty} \frac{g'(u)}{g(u)} = 0.$$
\end{enumerate}
\end{conditions}

\begin{remark}\rm
\begin{enumerate}[(a)]

\item
Note that the initial profiles are fashioned by starting with continuum $\{g_i\}$ and then determining a discrete subset $\sR_{\{g_i\}}$ of $\R^d$ in which to place particles. An alternative would to be to start from discrete $g_i : \Z_+ \to \Z_+$ satisfying discrete forms of the above conditions, and then use continuous interpolations in later arguments. 

\item
When Condition \ref{nondecrcond} is also satisfied, Condition \ref{covcond} may be restated as follows: There is a constant $C \in (0,\infty)$ so that for any integer $m \ge 1$, 
\begin{equation}\label{eq:AandBcond}
	g(m) - g(m-1) \le Cg'(m) \quad\text{whenever}\quad g(m) > g(m-1). 
\end{equation}
Thus we require that $g'(m) \ne 0$ when $g$ is non-constant on $[m-1,m]$. In particular, this is satisfied by any non-decreasing polynomial, convex function, or logarithmic function, among others. This is also satified vacuously by a constant function. 

Conditions \ref{nondecrcond}--\ref{smallderiv} as stated are convenient for proving our results. However, they may be weakened to each $g_i$ being sufficiently-well approximated at $\infty$ by a function that satisfies them (see Remark \ref{mainremark}). 
In this context, Condition \ref{covcond} does not meaningfully limit allowable initial profiles outside of the requirement that growth on $[m-1,m]$ is uniformly comparable to the derivative at the right endpoint $m$. That is, without changing the asymptotics of $g$ at $\infty$ we may assume that $g'(m) \ne 0$ whenever $g(m) \ne g(m-1)$. 

\item
Condition \ref{smallderiv} limits the growth of the function: A function $g$ satisfying Condition \ref{smallderiv} has $\lim_{u\to\infty} e^{-cu}g(u) = 0$ for any $c > 0$. See Remark \ref{condCexample} for further discussion on this assumption.

\end{enumerate}
\end{remark}

Our main results will be
stated in terms of the following functions related to an initial profile $\eta_{\{g_i\}}$. 

\begin{definition}\label{capitalfunctions} For functions $g_2, \ldots, g_d : \R_+ \to \R_+$ and a subset $A \subset \{2, \ldots, d\}$, define $G_A, \widehat G_A : \R_+ \to \R_+$ by 
\[
	G_A(u) = \prod_{i\in A} (2g_i(u)+1), \qquad \widehat G_A(u) = \int_0^u \sum_{i\in A} \prod_{l \in A\setminus\{i\}} (2g_l(v)+1)\,dv, 
\]
with the convention that $G_\varnothing \equiv 1$, $\widehat G_\varnothing \equiv 0$, and $\widehat G_{\{i\}}(u) = u$. Moreover, denote 
\[
	G(u) = G_{\{2, \ldots, d\}}(u), \qquad \widehat G(u) = \widehat G_{\{2, \ldots, d\}}(u). 
\]
Note that when $d = 2$, $\widehat G(u) = \widehat G_{\{2\}}(u) = u$.
\end{definition}

\begin{remark}
\label{G-rmk}
\rm

\begin{enumerate}[(a)]

\item 
When $\{g_i\}_{i\in A}$ all satisfy both Conditions \ref{nondecrcond} and \ref{covcond}, then
$G_A$ also satisfies those conditions. Similarly, when $\{g_i\}_{i\in A}$ all satisfy both Conditions \ref{nondecrcond} and \ref{smallderiv}, then $G_A$ does as well. Thus when all $g_2, \ldots, g_d$ satisfy Conditions \ref{nondecrcond}, \ref{covcond}, and \ref{smallderiv} (as is the assumption of our main result, Theorem \ref{meta}), so does $G = G_{\{2, \ldots, d\}}$. In particular, this implies that  there is a universal constant $C>0$ so that $G(m+1) \le CG(m)$ for any integer $m \ge 0$. This last claim is shown in Lemma \ref{Gproperty}. 

\item It will be useful to note that
\begin{equation}\label{eq:hatG'<G}
	\widehat G_A'(u) \le (d-1) G_A(u) \qquad\text{and}\qquad \widehat G_A(u) \le (d-1) \int_0^u G_A(v)\,dv, 
\end{equation}
where $\widehat G_A'(u)$ exists for all $u$ when $\{g_i\}_{i \in A}$ are continuous. 

\item $G$ can be seen to represent the cross-sectional volume of $\sR_{\{g_i\}}$ for a fixed coordinate in the $-e_1$ direction. 
On the other hand,  $\widehat G$ arises from a consideration of the difference of cross-sectional volumes shifted on the axis spanned by $e_i$, $i > 1$ (more technically seen in the proof of Lemma \ref{positivealphabound}).

\end{enumerate}
\end{remark}

\subsection{The stirring process}\label{stir}

The process $\{\eta_t\}$ has an alternate construction using the ``stirring'' variables 
$\{\xi_x(t) :  t \ge 0,
	\, x \in \sH_d\}$.
For an initial profile $\eta$, $\xi_x(0) = x$ for $\eta(x) = 1$, and each $\xi_x(t)$ corresponds initially to a particle position. $\xi_x(t)$ then evolves like a continous time simple random walk on $\Z^d$ with a clock independent of the others, subject to an exclusion rule with ``swapping.'' That is, if $\xi_x(t) = u$ and $\xi_y(t) = v$ with $|u-v| = 1$ and the process $\xi_x(t)$ attempts a jump to site $v$ at a time $t + \eps$, then $\xi_x(t+\eps) = v$ and $\xi_y(t + \eps) = u$. While the jump by the particle at $u$ at time $t$ is suppressed, the stirring variables switch positions, and thus switch corresponding particles. (For further details on this construction see \cite[pg. 399]{LigBook05}.)

Thus the collection 
$\{\xi_x(t) : \eta_0(x) = 1\}$,
gives all the particle positions at time $t$, but each individual $\xi_x(t)$ does not track a single particle. Moreover, $\sP_t = \{\xi_x(t) \cdot e_1 : \eta_0(x) = 1\}$. 
This also means that marginally, each $\xi_x(t)$ evolves like a simple random walk. That is, 
\[
	\P_\eta(\xi_x(t) \in \cdot) = P_x(\zeta_t^{(d)} \in \cdot), \qquad \eta(x) = 1, 
\]
where we recall that $\{\zeta_t^{(d)}\}$ denotes a continous time simple random walk on $\Z^d$ and $P_x$ is the probability measure under which $\zeta_0 = x$. 

In our analysis, we are able to reduce many quantities in terms of a one-dimensional simple random walk, so we introduce the simplified notation $\zeta_t = \zeta_t^{(1)}$ for the simple random walk on $\Z$, with $P_k(\zeta_0 = k) = 1$. We will frequently use the fact that, if $\zeta_{1,t}, \zeta_{2,t}, \ldots, \zeta_{d,t}$ denote independent copies of $\zeta_t$, then 
\begin{equation}\label{eq:rwindependentdecomp}
	\zeta_t^{(d)} \overset{\text{Law}}{=} ( \zeta_{1,t/d}, \zeta_{2,t/d}, \ldots, \zeta_{d,t/d} ). 
\end{equation}
Thus also each component of the stirring variables obeys 
\begin{equation}\label{eq:onedstirring}
	\P_\eta(\xi_x(t) \cdot e_i \in \cdot) = P_{x_i}(\zeta_{t/d} \in \cdot), \qquad \eta(x) = 1, \qquad i = 1, \ldots, d. 
\end{equation}

For a sequence $z = z(t)$, the quantity $N_t = N_t(z)$ in \eqref{eq:Ndefinition} may be alternately expressed in terms of the stirring variables as 
\begin{equation}\label{eq:Nintermsofstirrings}
	N_t = \sum_{x \in \sH_d} \eta_0(x) 1(\xi_x(t) \cdot e_1 > z) = \sum_{x \in \sR_{\{g_i\}}} 1(\xi_x(t) \cdot e_1 > z), 
\end{equation}
where the second equality is true $\P_{\{g_i\}}$-surely.
Then, 
\begin{equation}\label{eq:generalNexpression}
\begin{aligned}
	\E_{\{g_i\}}[N_t] &= \sum_{x \in \sR_{\{g_i\}}} P_{x_1}(\zeta_{t/d} > z)
	= \sum_{x_1 \le 0} \sum_{|x_2| \le g_2(-x_1)} \cdots \sum_{|x_d| \le g_d(-x_1)} P_{x_1}(\zeta_{t/d} > z) \\
	&= \sum_{j \ge 0} \Big( \prod_{i=2}^d (2\lfloor g_i(j) \rfloor + 1) \Big) P_{0}(\zeta_{t/d} > z + j).  
\end{aligned}
\end{equation}
Here, we used \eqref{eq:onedstirring} and the fact that $P_k(\zeta_t \in \cdot) = P_0(\zeta_t + k \in \cdot)$ for any $k \in \Z$. 

\begin{remark}\label{z/sqrtt} \upshape 
From \eqref{eq:generalNexpression}, for $\sup_{t \ge 0} \E_{\{g_i\}}[N_t] < \infty$ it is necessary that $t^{-1/2}z \to \infty$, by the central limit theorem.
\end{remark}

The expression \eqref{eq:generalNexpression} has a more convenient asymptotic form when the $\{g_i\}$ are sufficiently regular and have subexponential growth, given in the following lemma. (Its proof is contained in Section \ref{helping-section}.)
Recall the functions $G_A$ and $\widehat G_A$ for $A \subset \{2, \ldots, d\}$ from Definition \ref{capitalfunctions}. 

\begin{lemma}\label{meanasymp} Suppose that $g_2, \ldots, g_d$ are continuous and satisfy Condition \ref{nondecrcond}. Let $U = \{ 2 \le i \le d : \sup_{u \ge 0} g_i(u) = \infty\}$ and $B = \{2, \ldots, d\} \setminus U$ denote the indices of the unbounded and bounded functions, respectively. For $i \in B$, denote
$L_i = \lim_{u \to \infty} (2 \lfloor g_i(u) \rfloor +1)$.

Then, there is $C > 0$ so that for all $t \ge 0$, 
\begin{equation}\label{eq:ENexpressionbound}
\begin{aligned}
	&\bigg| \E_{\{g_i\}}[N_t]  - \Big( \prod_{i \in B} L_i \Big) E_0\Big[ \int_0^{(\zeta_{t/d} - z)_+} G_U(u)\,du \Big] \bigg| \\
	&\le C (E_0[G_U(\zeta_{t/d} - z)1(\zeta_{t/d} > z) ] + E_0[ \widehat G_U(\zeta_{t/d} - z)1(\zeta_{t/d} > z) ] ), 
\end{aligned}
\end{equation}
with the convention $\prod_{i \in \varnothing} L_i = 1$. 

Moreover, if in addition $\{g_i\}_{i \in U}$ satisfy Condition \ref{smallderiv} and 
\[
	\sup_{t \ge 0} E_0\Big[ \int_0^{(\zeta_{t/d} - z)_+} G_U(u)\,du \Big] < \infty, 
\]
then $\lim_{t\to\infty}  (E_0[G_U(\zeta_{t/d} - z)1(\zeta_{t/d} > z) ] + E_0[ \widehat G_U(\zeta_{t/d} - z)1(\zeta_{t/d} > z) ] ) = 0$. 
\end{lemma}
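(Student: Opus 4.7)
The plan is to proceed in two stages. Starting from the expression \eqref{eq:generalNexpression}, I would decompose the integrand $\prod_{i=2}^d(2\lfloor g_i(j)\rfloor + 1)$ according to the partition $\{2,\ldots,d\} = B \cup U$. Since each $g_i$ with $i \in B$ is nondecreasing and bounded, $\lfloor g_i(j)\rfloor$ is eventually constant in $j$, so there exists a finite $J_0$ with $\prod_{i \in B}(2\lfloor g_i(j)\rfloor + 1) = \prod_{i \in B}L_i$ for all $j \ge J_0$. The finite sum of contributions from $j < J_0$ gives an error of order $P_0(\zeta_{t/d} > z) \le E_0[G_U((\zeta_{t/d} - z)_+)1(\zeta_{t/d} > z)]$, using $G_U \ge 1$. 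For $i \in U$, using $|\lfloor g_i(j) \rfloor - g_i(j)| \le 1$ and the telescoping identity $\prod a_i - \prod b_i = \sum_k (a_k - b_k)\prod_{i<k}a_i \prod_{i>k}b_i$ (with $a_i \le b_i$), I would obtain the per-term bound $|\prod_{i \in U}(2\lfloor g_i(j)\rfloor + 1) - G_U(j)| \le 2\widehat G_U'(j)$.

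Next, for the main term $\prod_{i \in B} L_i \sum_{j \ge 0} G_U(j) P_0(\zeta_{t/d} > z + j)$, I would compare to the desired integral via Fubini:
\[
\int_0^\infty G_U(u) P_0(\zeta_{t/d} > z + u)\,du = E_0\bigg[\int_0^{(\zeta_{t/d} - z)_+} G_U(u)\,du\bigg].
\]
The Riemann-sum discretization error and the leftover $\sum_j \widehat G_U'(j) P_0(\zeta_{t/d} > z + j)$ are controlled by Abel summation combined with Condition \ref{covcond} (which bounds $G_U(j+1) - G_U(j)$ by $CG_U'(j+1)$) and the inequalities in Remark \ref{G-rmk}(b) (in particular $\widehat G_U'(u) \le (d-1)G_U(u)$). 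These allow each error to be re-expressed as an expectation over $(\zeta_{t/d} - z)_+$ and absorbed into the two target expectations on the right-hand side of \eqref{eq:ENexpressionbound}.

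For the second conclusion, suppose $\sup_t E_0[\int_0^{(\zeta_{t/d} - z)_+} G_U(u)\,du] = M < \infty$. Under Conditions \ref{nondecrcond} and \ref{smallderiv} on each $g_i$, $i \in U$ (and Remark \ref{G-rmk}(a)), L'Hopital's rule yields
\[
\lim_{v \to \infty}\frac{G_U(v)}{\int_0^v G_U(u)\,du} = \lim_{v \to \infty}\frac{G_U'(v)}{G_U(v)} = 0, \qquad \lim_{v \to \infty}\frac{\widehat G_U(v)}{\int_0^v G_U(u)\,du} = \lim_{v \to \infty}\sum_{i \in U}\frac{1}{2g_i(v)+1} = 0.
\]
Taking $\epsilon = 1$ and $A$ large enough so both ratios are $\le 1$ on $[A,\infty)$ gives $G_U(V_t) \le G_U(A) + \int_0^{V_t}G_U(u)\,du$ and analogously for $\widehat G_U$, so both error expectations in \eqref{eq:ENexpressionbound} are $O(M)$; combined with \eqref{eq:ENexpressionbound} this yields $\sup_t \E_{\{g_i\}}[N_t] < \infty$, hence by Remark \ref{z/sqrtt} and the CLT, $P_0(\zeta_{t/d} > z) \to 0$. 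Finally, for each $\epsilon > 0$ choose $A$ so that both ratios above are at most $\epsilon$ on $[A,\infty)$, then split:
\[
E_0[G_U((\zeta_{t/d}-z)_+)1(\zeta_{t/d}>z)] \le G_U(A)P_0(\zeta_{t/d}>z) + \epsilon E_0\bigg[\int_0^{(\zeta_{t/d}-z)_+} G_U(u)\,du\bigg] \le G_U(A)P_0(\zeta_{t/d}>z) + \epsilon M,
\]
with an identical argument for $\widehat G_U$. Sending $t \to \infty$ and then $\epsilon \to 0$ completes the proof.

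The principal technical obstacle is Part 1: carefully managing the integer-floor and Riemann-sum approximation errors so that they collapse into the two clean expectations on the right of \eqref{eq:ENexpressionbound} without producing extraneous $G_U'$ or $\widehat G_U'$ terms. The essential tools are Condition \ref{covcond}, which controls finite differences of $G_U$ by its derivative, and Remark \ref{G-rmk}(b), which relates $\widehat G_U'$ to $G_U$ at the level of integrals.
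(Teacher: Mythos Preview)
Your overall architecture matches the paper's: split $\{2,\ldots,d\}=B\cup U$, replace the bounded factors by $\prod_{i\in B}L_i$ at the cost of a finite tail, compare $\prod_{i\in U}(2\lfloor g_i\rfloor+1)$ to $G_U$ with error $O(\widehat G_U')$, and then pass from $\sum_{j\ge 0}G_U(j)P_0(\zeta_{t/d}>z+j)$ to $E_0[\int_0^{(\zeta_{t/d}-z)_+}G_U(u)\,du]$. Your treatment of the second conclusion is also essentially the paper's Lemma~\ref{smallerfunction} argument, just written out explicitly via L'H\^opital and a level-$A$ split; the only cosmetic difference is that the paper deduces $t^{-1/2}z\to\infty$ directly from the hypothesis $\sup_t E_0[\int_0^{(\zeta_{t/d}-z)_+}G_U]<\infty$ and $G_U\ge 1$, rather than first passing through $\sup_t \E_{\{g_i\}}[N_t]<\infty$.

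There is, however, a genuine gap in your handling of the Riemann-sum error. You state that the ``essential tool'' is Condition~\ref{covcond}, which bounds $G_U(j+1)-G_U(j)$ by $CG_U'(j+1)$. But Condition~\ref{covcond} is \emph{not} a hypothesis of this lemma: the first part assumes only continuity and Condition~\ref{nondecrcond}, and the second part adds only Condition~\ref{smallderiv}. Moreover, even if you invoke Condition~\ref{covcond}, your Abel-summation route produces terms of the form $\sum_j G_U'(j+1)P_0(\zeta_{t/d}>z+j)$, and absorbing these back into $E_0[G_U(\zeta_{t/d}-z)1(\zeta_{t/d}>z)]$ would in turn require $G_U'\le CG_U$, i.e.\ Condition~\ref{smallderiv}, which is also not assumed for the first part. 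So as written the argument uses hypotheses you do not have and still threatens to leave the ``extraneous $G_U'$ terms'' you yourself flag as the obstacle.

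The paper sidesteps all of this via Lemma~\ref{sum2exp}: for any \emph{nondecreasing} $h$,
\[
\bigg|E_0\Big[\int_0^{(\zeta_t-z)_+}h(u)\,du\Big]-\sum_{j\ge 0}h(j)P_{-j}(\zeta_t>z)\bigg|\le E_0[h(\zeta_t-z)1(\zeta_t>z)].
\]
Applied once with $h=G_U$ (giving the $G_U$ error term directly) and once with $h=\widehat G_U'$ (giving $E_0[\widehat G_U(\zeta_{t/d}-z)1(\zeta_{t/d}>z)]+E_0[\widehat G_U'(\zeta_{t/d}-z)1(\zeta_{t/d}>z)]$, the latter then bounded using $\widehat G_U'\le(d-1)G_U$), this handles both discretization errors using only monotonicity. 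Replace your Condition~\ref{covcond}/Abel step by this lemma and the proof goes through under the stated hypotheses.
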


We now enumerate two important cases of the lemma: 
\medskip

\noindent
{\it Case 1.} Suppose all $\{g_i\}$ are bounded, continuous, and nondecreasing.  Let $c_i = \sup_u \lfloor g_i(u) \rfloor$. Then $G_U \equiv 1$ and $\widehat G_U \equiv 0$, and Lemma \ref{meanasymp} says that 
\[
	\E_{\{g_i\}}[N_t] = \Big( \prod_{i=2}^d (2c_i +1) \Big) E_0[(\zeta_{t/d} - z)_+] + O(P_0(\zeta_{t/d} > z)), \qquad t\to\infty, 
\] 
where we must have $P_0(\zeta_{t/d} > z) = o(1)$ for $\lim_{t\to\infty} \E_{\{g_i\}}[N_t]$ to exist, by Remark \ref{z/sqrtt}.
In particular, this includes the case of constant $\{g_i\}$, which corresponds to an initial profile where a ``strip'' of particles extends to $-\infty$ in the $e_1$ direction. When $g_i \equiv c < 1$ for all $i$, so that the system begins with a single line of particles at $(x, 0, \ldots, 0)$, $x \in \{0, -1, -2, \ldots\}$, the asymptotics $\E_{\{g_i\}}[N_t] \asymp E_0[(\zeta_{t/d} - z)_+]$ are the same as for the $d = 1$ case in \cite{ConSet23}, but with time scaled by a factor of $d^{-1}$. 

\medskip

\noindent
{\it Case 2.} When $\lim_{u \to \infty} g_i(u) = \infty$ for all $i$, then
Lemma \ref{meanasymp} shows
\begin{align*}
	\E_{\{g_i\}}[N_t] &= 
	E_0\Big[ \int_0^{(\zeta_{t/d} - z)_+} G(u)\,du \Big] \\
	&\quad +  O( E_0[G(\zeta_{t/d} - z)1(\zeta_{t/d} > z)]  + E_0[ \widehat G(\zeta_{t/d} - z)1(\zeta_{t/d} > z)] ), \qquad t\to\infty. 
\end{align*}
Moreover, when $\{g_i\}$ are subexponential in the sense of Condition \ref{smallderiv}, then
\[
	\lim_{t\to\infty} \E_{\{g_i\}}[N_t] = 
	\lim_{t\to\infty} E_0\Big[ \int_0^{(\zeta_{t/d} - z)_+} G(u)\,du \Big], 
\]
when $z$ is chosen so that the right hand limit exists (which would imply $t^{-1/2}z\rightarrow \infty$ by Remark \ref{z/sqrtt}).
See Lemma \ref{pyramidmean} for the evaluation when $g_i$ are polynomials.

\subsection{Negative association and the Rayleigh property}\label{Rayleigh}

It is well known that the positions of particles in the symmetric exclusion process are negatively correlated \cite[Proposition VIII.1.7]{LigBook05}. More generally, if $V_2(t)$ is the Markov semigroup for the process on $(\Z^d)^2$ that gives the locations of particles in a $2$-particle system and $U_2(t)$ is the semigroup for the motion of $2$ independent random walks with jumps at rate $1$, then 
\begin{equation}\label{eq:semigroupinequality}
	V_2(t) h(x) \le U_2(t) h(x), \qquad x \in (\Z^d)^2, 
\end{equation}
for any symmetric, positive definite function $h$. (A function $h$ of two variables is positive definite if $\sum_{j,k} c(j) h(j,k)c(k) \ge 0$ whenever $\sum_j|c(j)| < \infty$ and $\sum_j c(j) = 0$.) 

Applying \eqref{eq:semigroupinequality} to $h(j,k) = 1(\{j,k\} \subset A)$ for $A \subset \Z^d$ and recalling that the collection of stirring variables describes the positions of all particles in a system, we obtain the following correlation inequality, which is Lemma 1' in \cite{Arr83} and Lemma 4.12 in \cite[Ch. VIII]{LigBook05}. 

\begin{lemma}\label{stirringcor} For any $A \subset \Z^d$ and $x \ne y$, 
\[
	\P_\eta\left( \xi_x(t) \in A, \xi_y(t) \in A \right) \le P_x(\zeta_t^{(d)} \in A)P_y(\zeta_t^{(d)} \in A), 
\]
when $\eta(x) = \eta(y) = 1$. 
\end{lemma}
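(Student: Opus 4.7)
The plan is to apply the semigroup inequality \eqref{eq:semigroupinequality} to the specific test function $h(j,k) = 1(j \in A)1(k \in A) = 1(\{j,k\} \subset A)$, exactly as hinted in the sentence directly preceding the lemma statement.

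First, I will verify that $h$ satisfies the hypotheses of \eqref{eq:semigroupinequality}. Symmetry is immediate from the product form. For positive definiteness, any finitely supported sequence $\{c(j)\}_{j\in\Z^d}$ satisfies
\[
\sum_{j,k \in \Z^d} c(j)\, h(j,k)\, c(k) \;=\; \Big( \sum_{j \in A} c(j) \Big)^2 \;\ge\; 0,
\]
so the nonnegativity condition holds in particular for $c$ with $\sum_j c(j) = 0$.

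Second, I will identify both sides of \eqref{eq:semigroupinequality} evaluated at the starting point $(x,y)$. Because $U_2(t)$ corresponds to two independent continuous-time simple random walks on $\Z^d$ with jumps at rate $1$, independence and the product form of $h$ yield
\[
U_2(t) h(x,y) \;=\; P_x(\zeta_t^{(d)} \in A)\, P_y(\zeta_t^{(d)} \in A).
\]
On the other side, $V_2(t) h(x,y)$ is the expectation of $h$ evaluated at the pair of particle positions in the two-particle SSEP started from $(x,y)$. Because $h$ depends only on the unordered pair of positions, and because the stirring construction ensures that the unordered pair $\{\xi_x(t),\xi_y(t)\}$ has the same joint law as the unordered pair of positions in the two-particle SSEP started from $\{x,y\}$ (the ``swap'' and ``suppress'' formulations produce the same distribution on sets of occupied sites), one obtains
\[
V_2(t) h(x,y) \;=\; \P_\eta(\xi_x(t) \in A,\, \xi_y(t) \in A),
\]
and the presence of other particles in $\eta$ beyond $x,y$ is irrelevant since the stirring clocks used for $\xi_x,\xi_y$ do not depend on them.

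Chaining these two identifications with \eqref{eq:semigroupinequality} proves the lemma. The only substantive step is the second identification, which reduces to the standard observation that symmetric functionals of particle positions cannot distinguish between the swap and suppression conventions; everything else is a direct substitution into an already available inequality.
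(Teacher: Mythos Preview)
Your proposal is correct and follows exactly the approach the paper indicates in the sentence preceding the lemma: apply \eqref{eq:semigroupinequality} to $h(j,k)=1(\{j,k\}\subset A)$, check symmetry and positive definiteness via $\sum_{j,k}c(j)h(j,k)c(k)=(\sum_{j\in A}c(j))^2\ge 0$, and identify the two sides using independence for $U_2(t)$ and the equivalence of the swap and suppression conventions on unordered pairs for $V_2(t)$. The paper itself gives no further proof beyond this hint and a citation to Arratia and Liggett, so there is nothing to compare.
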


The occupation variables $\{\eta_t(x) : x \in \Z^d\}$ are also negatively correlated \cite[Lemma VII.1.36]{LigBook05}. In fact, they satisfy the {\em strong Rayleigh} property whenever the system is started from a product measure (including the deterministic profiles we consider here): For finite $A \subset \Z^d$, the generating function 
\[
	Q(s) = \E_\eta\Big[ \prod_{x \in A} s_x^{\eta_t(x)} \Big] , \qquad s = \{s_x\}_{x \in A} \in \R^A, 
\]
satisfies 
$Q(s) \partial^2_{s_xs_y} Q(s) \le \partial_{s_x} Q(s) \partial_{s_y} Q(s)$,
for any $x \ne y$, $s \in \R^A$, and $\eta \in \{0,1\}^{\Z^d}$ \cite{BorBraLig09}. When $A = \{x,y\}$ and $s_x = s_y = 1$, we obtain the well-known negative correlation of occupation variables:
\begin{equation}\label{eq:occnegcor}
	\P_\eta(\eta_t(x) = 1, \eta_t(y) = 1) \le \P_\eta(\eta_t(x) = 1)\P_\eta(\eta_t(y) = 1). 
\end{equation}
As shown in \cite{Lig09,Van10}, the strong Rayleigh property implies that for any $t$ and subset $A \subset \Z^d$, there exist independent Bernoulli random variables $\{\theta_t(x) : x \in A\}$ such that 
\[
	\sum_{x \in A} \eta_t(x) \overset{\text{Law}}{=} \sum_{x \in A} \theta_t(x). 
\]
This fact provides the criterion from \cite{Lig09} for Poisson convergence of sums of occupation variables in symmetric exclusion systems, given after the following definition. 

\begin{definition}\label{def-SCE}
\rm
For $\eta \in \{0,1\}^{\Z^d}$ and $A \subset \Z^d$ (which may depend on $t$), define 
\[
	\sS_t(\eta, A) = \sum_{x \in A} (\E_\eta[\eta_t(x)])^2, \qquad \sC_t(\eta, A) = - 2\sum_{\{x,y\} \subset A} \Cov_\eta(\eta_t(x), \eta_t(y)), 
\]
and $\sE_t(\eta, A) = \sS_t(\eta, A) + \sC_t(\eta, A)$. (Note that $\sC_t(\eta, A) \ge 0$ by \eqref{eq:occnegcor}.) 
We also use the following shorthand: $\sS_t(\eta, z)=\sS_t(\eta, ((z,\infty)\cap\Z) \times \Z^{d-1})$ and $\sS_t(\{g_i\}, A) = \sS_t(\eta_{\{g_i\}}, A)$, with $\sC_t(\eta,z)$, $\sE_t(\eta, z)$, $\sC_t(\{g_i\}, A)$, and $\sE_t(\{g_i\}, A)$ defined similarly.

\end{definition}

\begin{lemma}\label{LiggettPois} 
We have
\[
	\sum_{x \in A} \eta_t(x) \Rightarrow \mbox{Poisson}\,(\lambda), \qquad t \to \infty, 
\]
if and only if 
\[
	\lim_{t\to\infty} \sum_{x \in A} \E_\eta[\eta_t(x)] = \lambda \qquad\mbox{and} \qquad \lim_{t\to\infty} \sE_t(\eta, A) = 0. 
\]
\end{lemma}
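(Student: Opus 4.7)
The plan is a direct reduction to the classical Poisson convergence criterion for row-wise independent arrays of Bernoulli variables recalled in \eqref{eq:meanandss}--\eqref{eq:meanvariancelimit}, using the strong-Rayleigh Bernoulli representation cited in the paragraph immediately preceding the lemma. First I would invoke that representation: for each $t$ there exist independent Bernoullis $\{\theta_t(x) : x \in A\}$ with
\[
    \sum_{x \in A} \eta_t(x) \;\stackrel{d}{=}\; \sum_{x \in A} \theta_t(x),
\]
and write $p_t(x) := E[\theta_t(x)]$. Since the distributions match, so do their first two moments.

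Matching means gives $\sum_{x \in A} \E_\eta[\eta_t(x)] = \sum_{x \in A} p_t(x)$; matching variances, and using independence on the right, gives $\Var_\eta\bigl(\sum_{x \in A} \eta_t(x)\bigr) = \sum_{x \in A} p_t(x)(1-p_t(x))$. Subtracting these two identities, and recalling the elementary covariance expansion in \eqref{eq:meanminusvarexcl}, I would obtain the key identification
\[
    \sum_{x \in A} p_t(x)^2 \;=\; \sum_{x \in A} \E_\eta[\eta_t(x)] - \Var_\eta\Big(\sum_{x \in A}\eta_t(x)\Big) \;=\; \sS_t(\eta, A) + \sC_t(\eta, A) \;=\; \sE_t(\eta, A).
\]
The classical criterion applied to $\sum_{x \in A}\theta_t(x)$ then asserts that it converges in distribution to $\mbox{Poisson}(\lambda)$ if and only if $\sum_{x\in A} p_t(x)\to \lambda$ and $\sum_{x\in A} p_t(x)^2\to 0$; combining with the above translates this directly into the stated conditions $\sum_{x \in A}\E_\eta[\eta_t(x)]\to \lambda$ and $\sE_t(\eta, A)\to 0$. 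Since $\sum_{x\in A}\eta_t(x)$ has the same law as $\sum_{x\in A}\theta_t(x)$, the equivalence for $\eta_t$ follows.

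The one technical point to attend to is justifying the moment identities when $A$ is infinite. The negative correlation of occupation variables \eqref{eq:occnegcor} yields $\Var_\eta(N_t)\le \E_\eta[N_t]$, so whenever $\E_\eta[N_t]$ is finite---which is forced either by the Poisson hypothesis or by $\sum_x \E_\eta[\eta_t(x)]\to\lambda<\infty$---both the variance and the sum-of-squares are finite and the manipulations above are rigorous. For general infinite $A$, one can additionally pass through finite truncations $A_n\uparrow A$ and take monotone limits, since each of $\sum_x p_t(x)$, $\sS_t(\eta, A)$, and $\sC_t(\eta, A)$ is nonnegative and monotone in $A$. Beyond this minor bookkeeping, the lemma is an immediate consequence of the Bernoulli representation together with the classical Poisson-from-Bernoulli criterion, so there is no substantial obstacle.
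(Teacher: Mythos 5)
Your proposal is correct and follows essentially the same route as the paper: both invoke the strong Rayleigh Bernoulli representation $\sum_{x\in A}\eta_t(x) \stackrel{d}{=} \sum_{x\in A}\theta_t(x)$, match the first two moments to identify $\sE_t(\eta,A) = \sum_{x\in A}(E[\theta_t(x)])^2$, and then apply the classical mean/sum-of-squares criterion for Poisson convergence of sums of independent Bernoullis recalled in the Introduction. Your extra remarks on finiteness and truncation for infinite $A$ are harmless bookkeeping the paper leaves implicit.
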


This lemma follows from standard conditions for Poisson convergence of sums of independent Bernoulli random variables discussed in the Introduction, since 
\begin{align*}
	\sE_t(\eta, A) &= \E_\eta \Big[ \sum_{x \in A} \eta_t(x) \Big] - \Var_\eta \Big( \sum_{x \in A} \eta_t(x) \Big) \\
	&= E\Big[ \sum_{x \in A} \theta_t(x) \Big] - \Var \Big( \sum_{x \in A} \theta_t(x) \Big) = \sum_{x \in A} (E[\theta_t(x)])^2, 
\end{align*}
for each $t$. 
Thus, the necessary and sufficient conditions for $N_t$ as given in \eqref{eq:Ndefinition} to converge in distribution to $\mbox{Poisson}(\lambda)$ can be stated identically for the exclusion system and the corresponding system of independent particles, namely 
$\lim_{t\to\infty} \E_{\eta}[N_t] = \lim_{t\to\infty} \Var_{\eta}(N_t) = \lambda$.

It will be useful to note the following monotonicity of $\sE_t$. Its proof is the same as Lemma 3.3 of \cite{ConSet23} using Lemma \ref{stirringcor} and is omitted. 

\begin{lemma}\label{monotoneerror} If $\eta(x) \le \tilde \eta(x)$ for every $x \in \Z^d$, then $\sE_t(\eta, A) \le \sE_t(\tilde \eta, A)$. 
\end{lemma}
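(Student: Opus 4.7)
The plan is to reduce the monotonicity claim to a comparison of a single random variable, then exploit the stirring coupling together with Lemma \ref{stirringcor}. First, I would use the identity
\[
\sE_t(\eta,A) \;=\; \E_\eta[N_A] - \Var_\eta(N_A), \qquad N_A := \sum_{x\in A}\eta_t(x),
\]
which was already stated for general $A$ after Lemma \ref{LiggettPois} (it follows by expanding $\sE_t(\eta,A) = \sum_{x,y\in A}\E_\eta[\eta_t(x)]\E_\eta[\eta_t(y)] - \sum_{x\ne y}\E_\eta[\eta_t(x)\eta_t(y)]$ and using $\eta_t(x)^2=\eta_t(x)$). Thus it suffices to prove $\E_\eta[N_A] - \Var_\eta(N_A) \le \E_{\tilde\eta}[\tilde N_A] - \Var_{\tilde\eta}(\tilde N_A)$, where $\tilde N_A$ is the analogous sum for the larger initial profile.

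Next, I would couple the two systems on a single probability space via the graphical stirring construction: the underlying edge clocks do not depend on the initial condition, so on a common probability space we get variables $\{\xi_x(t) : x \in \Z^d\}$ with $\eta_t(u) = \sum_{x:\eta(x)=1}1(\xi_x(t)=u)$ and an identical formula for $\tilde\eta_t$. Setting $D := \{x : \tilde\eta(x)=1,\ \eta(x)=0\}$ and $M_A := \sum_{x \in D}1(\xi_x(t)\in A)$, this coupling gives $\tilde N_A = N_A + M_A$ almost surely, and a short expansion yields
\[
\sE_t(\tilde\eta,A) - \sE_t(\eta,A) \;=\; \E[M_A] - \Var(M_A) - 2\Cov(N_A, M_A).
\]

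It remains to verify that each piece on the right has the correct sign, and for this I would invoke Lemma \ref{stirringcor}. That lemma, combined with the fact that marginally each $\xi_x(t)$ has the law of $\zeta^{(d)}_t$ started at $x$, gives $\Cov\bigl(1(\xi_x(t)\in A),\, 1(\xi_y(t)\in A)\bigr) \le 0$ whenever $x \ne y$. Expanding $\E[M_A] - \Var(M_A) = \sum_{x\in D}P(\xi_x(t)\in A)^2 - 2\sum_{\{x,y\}\subset D}\Cov(1(\xi_x(t)\in A), 1(\xi_y(t)\in A)) \ge 0$, and applying the same covariance bound across the disjoint index sets $\{x:\eta(x)=1\}$ and $D$ to obtain $\Cov(N_A, M_A) \le 0$, I would conclude $\sE_t(\tilde\eta, A) \ge \sE_t(\eta, A)$. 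I do not anticipate a serious obstacle; the only mild care needed is absolute convergence when $\eta$ has infinite support, which is handled by Tonelli once all sums are rewritten via the stirring representation, the summands being nonnegative after applying the correlation bound.
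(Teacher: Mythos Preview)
Your proposal is correct and follows essentially the same route the paper indicates: the paper omits the proof but states it is identical to \cite[Lemma 3.3]{ConSet23} via Lemma~\ref{stirringcor}, which is exactly your strategy of coupling both systems through the common stirring variables, writing $\tilde N_A = N_A + M_A$, and using the negative correlation $\Cov(1(\xi_x(t)\in A),1(\xi_y(t)\in A))\le 0$ for $x\ne y$ to conclude. The only cosmetic point is that Lemma~\ref{stirringcor} is stated under the hypothesis $\eta(x)=\eta(y)=1$, so when you invoke it for indices in $D$ you are implicitly applying it under $\P_{\tilde\eta}$ (or, equivalently, noting that the stirring variables are defined by the graphical construction independently of which sites are initially occupied); this is harmless but worth a one-line remark.
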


\subsection{Notation and conventions}

In addition to notation already introduced, we have the following. 

We use $x(u) = O(y(u))$ as $u \to \infty$ to mean $x(u) \le C y(u)$ for some $C$ and sufficiently large $u$, and $x(u) \asymp y(u)$ as $u \to\infty$ means $x(u) = O(y(u))$ and $y(u) = O(x(u))$. $x(u) \sim y(u)$ as $u \to \infty$ denotes $\lim_{u\to\infty} x(u)/y(u) = 1$, and $x(u) = o(y(u))$ as $u \to \infty$ denotes $\lim_{u \to \infty} x(u)/y(u) = 0$. When clear, we will often omit ``$u \to \infty$'' from the notation. 

Other then where specified, the value $C$ is treated as a universal constant, and may correspond to different values in the same proof. The same is true for $C'$, $C''$, etc. 

The random variable $X$ will always refer to a standard Gaussian random variable defined on some space with probability measure $P$. The standard Gaussian density function is denoted by $\varphi$ and its cumulative distribution function is $\Phi$.

\section{Main results}\label{mainresults}

The following theorem presents our first main result. Fix a sequence $z = z(t)$ and define $N_t$ in terms of $z$ as in \eqref{eq:Ndefinition}. 
Recall the definition of the function $G$ from Definition \ref{capitalfunctions}. Also recall the quantities $\sS_t(\{g_i\}, z)$, $\sC_t(\{g_i\},z)$, and $\sE_t(\{g_i\},z)$ from Definition \ref{def-SCE}.

\begin{theorem}\label{meta} Suppose that $g_2, \ldots, g_d$ satisfy Conditions \ref{nondecrcond}, \ref{covcond}, and \ref{smallderiv}
and that 
\[
	\sup_{t \ge 0} \E_{\{g_i\}}[N_t] < \infty. 
\]
If $d \ge 4$, then 
\begin{equation}\label{eq:errorlimit0}
	\lim_{t\to\infty} \sE_t(\{g_i\},z) = \lim_{t\to\infty} (\sS_t(\{g_i\},z) + \sC_t(\{g_i\},z))= 0.
\end{equation}
If $d \in \{2,3\}$ and in addition 
\begin{equation}\label{eq:d=2,3cond}
	E_0[ G(\zeta_{t/d} - z)1(\zeta_{t/d} > z)] = \begin{cases} o\left((\log t)^{-1/2}\right) &\;\text{if }\, d = 3 \\[0.5em]
	o\left(t^{-1/4}\right) &\;\text{if }\, d = 2, \end{cases} \qquad t \to \infty, 
\end{equation}
then \eqref{eq:errorlimit0} holds. 
\end{theorem}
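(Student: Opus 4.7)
The plan is to apply the two bounds already prepared in Section \ref{resultsoverview}---Proposition \ref{ssbound} for $\sS_t(\{g_i\},z)$ and Corollary \ref{Ccor} (i.e.\ \eqref{eq:introC}) for $\sC_t(\{g_i\},z)$---and then trace how the dimension-dependent prefactors $\gamma_d(t),\gamma_{d+1}(t)$ from \eqref{gamma-d} interact with the vanishing random-walk functional. Write
$$a(t) = E_0\bigl[G(\zeta_{t/d}-z)\mathbf{1}(\zeta_{t/d}>z)\bigr],$$
with $G$ the product function of Definition \ref{capitalfunctions}. By Remark \ref{G-rmk}(a), $G$ inherits Conditions \ref{nondecrcond}--\ref{smallderiv} from the $g_i$, and so, together with $\sup_t \E_{\{g_i\}}[N_t] < \infty$, Lemma \ref{meanasymp} (applied with $G_U$ up to an absorbed bounded factor from any bounded $g_i$) will give $a(t)\to 0$ as $t\to\infty$.

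Proposition \ref{ssbound} yields $\sS_t(\{g_i\},z) = O(a(t))$, hence $\sS_t(\{g_i\},z)\to 0$ in every dimension $d\ge 2$ with no further assumption needed. The rest of the argument is therefore devoted to $\sC_t(\{g_i\},z)$, for which Corollary \ref{Ccor} provides the estimate
$$\sC_t(\{g_i\},z) = O\bigl(\gamma_d(t)\, a(t)^2 \;+\; \gamma_{d+1}(t)\, a(t)\bigr), \qquad t\to\infty.$$
In dimension $d\ge 4$, both $\gamma_d(t)$ and $\gamma_{d+1}(t)$ are $O(1)$, so the right-hand side is $O(a(t))$ and vanishes; this gives \eqref{eq:errorlimit0}.

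For the borderline dimensions, a short case analysis using \eqref{gamma-d} finishes the proof. When $d=3$, $\gamma_3(t)\asymp \log t$ and $\gamma_4(t)=O(1)$; the linear term $\gamma_4(t)a(t)$ vanishes automatically, while the quadratic term $(\log t)\,a(t)^2$ is $o(1)$ exactly when $a(t)=o((\log t)^{-1/2})$, which is hypothesis \eqref{eq:d=2,3cond}. When $d=2$, $\gamma_2(t)\asymp \sqrt{t}$ and $\gamma_3(t)\asymp \log t$; the quadratic term $\sqrt{t}\,a(t)^2$ is $o(1)$ exactly when $a(t)=o(t^{-1/4})$, matching \eqref{eq:d=2,3cond}, and the linear term $(\log t)\,a(t)$ is simultaneously killed since $t^{-1/4}$ dominates $(\log t)^{-1}$ for large $t$. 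This yields \eqref{eq:errorlimit0} in both cases and completes the argument.

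The main obstacle is not in this final assembly but rather in Corollary \ref{Ccor}---more precisely, in the upstream Proposition \ref{mainbound}---whose derivation from the duality identity in Lemma \ref{covintrep} will require the decomposition \eqref{eq:rwindependentdecomp} together with careful random-walk Green's-function estimates in $d-1$ dimensions to produce the exact prefactors $\gamma_d(t)$ and $\gamma_{d+1}(t)$. Once those quantitative covariance estimates (together with the decay $a(t)\to 0$ from Lemma \ref{meanasymp}) are taken as given, Theorem \ref{meta} reduces to the dimension-by-dimension bookkeeping above.
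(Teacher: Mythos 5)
Your proposal is correct and follows essentially the same route as the paper: apply Proposition \ref{ssbound} together with the decay $E_0[G(\zeta_{t/d}-z)1(\zeta_{t/d}>z)]\to 0$ (this is exactly \eqref{eq:Gto0} of Corollary \ref{Ccor}, which is the cleaner citation than Lemma \ref{meanasymp}) to kill $\sS_t$, then use the Corollary \ref{Ccor} bound and track $\gamma_d(t),\gamma_{d+1}(t)$ case by case, with hypothesis \eqref{eq:d=2,3cond} handling the quadratic term in $d=2,3$ and the observation $(\log t)\,o(t^{-1/4})=o(1)$ handling the linear term in $d=2$, just as in the paper's proof.
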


\begin{remark}\label{mainremark} \upshape
The result still holds when assumptions on $\{g_i\}$ are weakened. In particular, suppose that there are functions $f_2, \ldots, f_d$ that satisfy Conditions \ref{nondecrcond}, \ref{covcond}, and \ref{smallderiv} and $g_i(u) \le f_i(u)$ for all $i$. Then from Lemma \ref{monotoneerror}, 
\begin{equation}\label{eq:fleg}
	\sE_t(\{g_i\}, z) \le \sE_t(\{f_i\},z). 
\end{equation}
As long as
\begin{align}
\label{rm3.1}
	\sup_{t \ge t_0} \frac{\E_{\{f_i\}}[N_t]}{\E_{\{g_i\}}[N_t]} < \infty, 
\end{align}
for some $t_0 > 0$, then one can conclude \eqref{eq:errorlimit0} from first applying the theorem using $\{f_i\}$. In practice, this means that limiting behavior of $N_t$ depends on the shape functions $\{g_i\}$ only through the leading order asymptotics of $G(u)$ as $u \to \infty$ (see Remark \ref{polyremark} \ref{polyfirstorder}).

In \eqref{rm3.1}, the level $z$ used to define $N_t$ in both expectations is the same.
Also note that $g_i \le f_i$ for all $i$ implies $\E_{\{g_i\}}[N_t] \le \E_{\{f_i\}}[N_t]$ for all $t$. In particular, if $\{g_i\}$ are constrained only to a finite number of vertices in $\Z^d$ (that is, there is a finite number of particles in the system), then since $t^{-1/2}z\rightarrow \infty$ for $\E_{\{f_i\}}[N_t]$ to be finite by Remark \ref{z/sqrtt}, we would have $\E_{\{g_i\}}[N_t]\rightarrow 0$, a violation of \eqref{rm3.1}.
\end{remark}

From Lemma \ref{LiggettPois}, we immediately obtain the following corollary, to which the previous remark also applies.  

\begin{corollary}\label{poissonlimittheorem} Suppose that $g_2, \ldots, g_d$ satisfy Conditions \ref{nondecrcond}--\ref{smallderiv} and that 
\[
	\lim_{t\to\infty} \E_{\{g_i\}}[N_t] = \lambda < \infty. 
\]
If $d \ge 4$, then $N_t \Rightarrow \text{Poisson}(\lambda)$. If $d \in \{2, 3\}$ and \eqref{eq:d=2,3cond} holds, then $N_t \Rightarrow \text{Poisson}(\lambda)$. 
\end{corollary}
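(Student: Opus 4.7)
The plan is to simply combine Theorem \ref{meta} with Lemma \ref{LiggettPois}, since at this point essentially all the work has been done. First I would observe that the hypothesis $\lim_{t\to\infty} \E_{\{g_i\}}[N_t] = \lambda < \infty$ is stronger than what Theorem \ref{meta} requires: in particular it gives $\sup_{t \ge 0} \E_{\{g_i\}}[N_t] < \infty$, so the theorem applies.

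Next, I would split into cases by dimension. In the case $d \ge 4$, Theorem \ref{meta} directly yields $\sE_t(\{g_i\}, z) \to 0$. In the case $d \in \{2,3\}$, the additional hypothesis \eqref{eq:d=2,3cond} on the decay of $E_0[G(\zeta_{t/d} - z)1(\zeta_{t/d} > z)]$ is precisely what is needed to invoke Theorem \ref{meta} in that dimensional regime, again yielding $\sE_t(\{g_i\},z) \to 0$.

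Finally, I would apply Lemma \ref{LiggettPois} with $A = ((z,\infty) \cap \Z) \times \Z^{d-1}$, so that $\sum_{x \in A} \eta_t(x) = N_t$ in the notation of \eqref{eq:Ndefinition}. The two hypotheses of Lemma \ref{LiggettPois} are then in hand: the mean condition $\sum_{x \in A} \E_{\{g_i\}}[\eta_t(x)] = \E_{\{g_i\}}[N_t] \to \lambda$ holds by assumption, and $\sE_t(\{g_i\}, z) \to 0$ holds by the preceding step. The conclusion $N_t \Rightarrow \text{Poisson}(\lambda)$ follows.

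There is no substantive obstacle in the corollary itself; all of the real work lies upstream, in Theorem \ref{meta} (which controls the self-interaction term $\sS_t$ and the covariance term $\sC_t$ through the duality/stirring bounds leading to Lemma \ref{covintrep} and Corollary \ref{Ccor}) and in Lemma \ref{LiggettPois} (which rests on the strong Rayleigh property of SSEP discussed in Section \ref{Rayleigh}). Thus the corollary is genuinely a one-line deduction and the proof proposal is simply to chain those two results together.
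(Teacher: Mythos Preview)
Your proposal is correct and matches the paper's approach exactly: the paper states that the corollary follows immediately from Lemma \ref{LiggettPois} (together with Theorem \ref{meta}), which is precisely the chaining you describe.
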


\begin{remark}\rm
\label{independence remark}

\begin{enumerate}[(a)]

\item As mentioned in the Introduction, Corollary \ref{poissonlimittheorem} shows for a large class of shapes $\{g_i\}$ that the exclusion and independent particle systems have the same behavior with respect to the Poisson convergence, needing only that $E_{\{g_i\}}[N_t]$ has a limit.  Indeed, in $d\geq 4$, all shapes $\{g_i\}$ considered are allowed.  While in $d=2,3$, the sufficient condition \eqref{eq:d=2,3cond}, since the level $z$ so that $E_{\{g_i\}}[N_t]$ converges would be the same as for independent particles, is a geometric one on the shape $\{g_i\}$, valid in particular for natural shapes such as those with polynomial growth, considered later in Theorem \ref{pyramidmain}.  
Exploring the necessity of \eqref{eq:d=2,3cond} 
is an open problem, mentioned in Subsection \ref{open problems}. 

\item Recalling \eqref{eq:orderstatNrel}, in particular that $\{X_t \le z\} = \{N_t = 0\}$, Corollary \ref{poissonlimittheorem} implies that $P(X_t \le z) \to e^{-\lambda}$. Thus when the scaling is of the form $z = b_t(x + a_t)$ for $x \in \R$, we may conclude that the limiting cumulative distribution function of $b_t^{-1}X_t - a_t$ at $x$ is $e^{-\lambda}$, where $\lambda = \lambda(x)$. In this manner we derive a Gumbel limiting distribution for $X_t$ for the examples in Section \ref{gumbelsec}, where $\lambda(x)$ is of the form $Me^{-\beta x}$. 

\item We observe in passing that if, in the context of this corollary, $z \to \infty$ is chosen so that the limit $\lambda =0$, then $N_t$ converges weakly to the point mass $\delta_0$.

\end{enumerate}
\end{remark}

Theorem \ref{meta} is proved using the next two propositions and the following corollary. Recall the functions $G$ and $\widehat G$ from Definition \ref{capitalfunctions}.

\begin{prop}\label{ssbound} Suppose that $g_2, \ldots, g_d$ satisfy Condition \ref{nondecrcond}. 
Then for any $t \ge 0$ and $z \in \R$, 
\[
	\sS_t(\{g_i\}, z) \le \E_{\{g_i\}}[N_t]E_0[G(\zeta_{t/d} - z)1(\zeta_{t/d} > z)]. 
\]
\end{prop}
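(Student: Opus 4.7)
The proposition amounts to bounding the one-point occupation probability $p_x := \E_{\{g_i\}}[\eta_t(x)]$ uniformly in $x_2,\ldots,x_d$ once $x_1 > z$ is fixed, and then applying the trivial inequality $\sum_i a_i^2 \le (\sup_i a_i)\sum_i a_i$. The core ingredient is self-duality (equivalently, the stirring representation), which reduces a one-point computation to a single continuous-time simple random walk.

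\textbf{Step 1 (self-duality).} Using \eqref{eq:Nintermsofstirrings} and the fact that each stirring variable $\xi_y(t)$ is marginally distributed as $\zeta^{(d)}_t$ started at $y$, together with the reversibility of the simple random walk on $\Z^d$,
\[
	p_x = \sum_{y \in \sR_{\{g_i\}}} \P_{\{g_i\}}(\xi_y(t) = x) = \sum_{y \in \sR_{\{g_i\}}} P_x(\zeta^{(d)}_t = y) = P_x\bigl(\zeta^{(d)}_t \in \sR_{\{g_i\}}\bigr).
\]
This is simply the $n=1$ case of \eqref{eq:selfduality}.

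\textbf{Step 2 (uniform upper bound on $p_x$ for $x_1 > z$).} Since $\sR_{\{g_i\}} \subset \sH_d$, being in $\sR_{\{g_i\}}$ forces the first coordinate to be $\le 0$; hence, using the independent-component decomposition \eqref{eq:rwindependentdecomp} and the symmetry $\zeta_{t/d} \overset{\mathrm{d}}{=} -\zeta_{t/d}$,
\[
	p_x \le P_x\bigl(\zeta^{(d)}_t \cdot e_1 \le 0\bigr) = P_0(\zeta_{t/d} \le -x_1) = P_0(\zeta_{t/d} \ge x_1).
\]
For any $x$ with $x_1 > z$ we have $\{\zeta_{t/d} \ge x_1\} \subseteq \{\zeta_{t/d} > z\}$, so $p_x \le P_0(\zeta_{t/d} > z)$, a bound that is \emph{independent} of $x_2,\ldots,x_d$.

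\textbf{Step 3 (assembling).} By Step 2 together with $\sum a_i^2 \le (\sup_i a_i)\sum a_i$,
\[
	\sS_t(\{g_i\},z) = \sum_{x: x_1 > z} p_x^2 \le P_0(\zeta_{t/d} > z)\sum_{x: x_1 > z} p_x = P_0(\zeta_{t/d} > z)\,\E_{\{g_i\}}[N_t].
\]
Finally, $G(u) = \prod_{i=2}^d(2g_i(u)+1) \ge 1$ because $g_i \ge 0$, so
\[
	P_0(\zeta_{t/d} > z) = E_0[\mathbf{1}(\zeta_{t/d} > z)] \le E_0\bigl[G(\zeta_{t/d}-z)\mathbf{1}(\zeta_{t/d} > z)\bigr],
\]
which yields the stated inequality.

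There is no real ``hard part'' here; the one insight is that self-duality immediately collapses the $(d-1)$-dimensional dependence of $p_x$ on the transverse coordinates $x_2,\ldots,x_d$, because $\sR_{\{g_i\}}$ is already contained in $\{y_1 \le 0\}$ and nothing stronger about the transverse profile is needed. The looseness with $G \ge 1$ in the last step suggests the form of the right-hand side in the proposition is chosen for compatibility with the analogous covariance bound in Proposition \ref{mainbound} and the expression in \eqref{eq:introC}, rather than being forced by the argument.
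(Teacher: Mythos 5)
Your proof is correct, and it reaches the key estimate by a somewhat different route than the paper. The paper bounds the one-point function from the initial-condition side: it projects each stirring variable onto its first coordinate, so the transverse slab at depth $j$ contributes $\prod_{i}(2\lfloor g_i(j)\rfloor+1)\le G(j)$ walkers, giving $\E_{\{g_i\}}[\eta_t(k)]\le E_0[G(\zeta_{t/d}-k_1)1(\zeta_{t/d}\ge k_1)]\le E_0[G(\zeta_{t/d}-z)1(\zeta_{t/d}>z)]$ for $k_1>z$, where the last step uses Condition \ref{nondecrcond} through the monotonicity of $G$. You instead use the $n=1$ duality exactly, $\E_{\{g_i\}}[\eta_t(x)]=P_x(\zeta_t^{(d)}\in\sR_{\{g_i\}})$, and discard only the transverse constraints of the target set (using $\sR_{\{g_i\}}\subset\sH_d$ plus the component decomposition and symmetry of the walk), which yields the cleaner uniform bound $\sup_{x:\,x_1>z}\E_{\{g_i\}}[\eta_t(x)]\le P_0(\zeta_{t/d}>z)$; the factor $G$ enters only at the very end via $G\ge 1$. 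Both arguments share the same skeleton $\sS_t(\{g_i\},z)\le(\sup_x \E_{\{g_i\}}[\eta_t(x)])\,\E_{\{g_i\}}[N_t]$, but yours gives the slightly stronger intermediate statement $\sS_t(\{g_i\},z)\le \E_{\{g_i\}}[N_t]\,P_0(\zeta_{t/d}>z)$ and does not use Condition \ref{nondecrcond} at all, while the paper's version produces the $G$-weighted right-hand side directly, in the form that is reused alongside Proposition \ref{mainbound} and Corollary \ref{Ccor}; your final relaxation by $G\ge1$ recovers exactly the stated inequality, so the proposition follows a fortiori.
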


We now state a bound on $\sC_t(\{g_i\},z)$ when $\{g_i\}$ also satisfy Condition \ref{covcond}. 
It is given in terms of the quantity $\gamma_d(t)$ defined in \eqref{gamma-d}.

\begin{prop}\label{mainbound} 
Suppose $g_2, \ldots, g_d$ all satisfy Conditions \ref{nondecrcond} and \ref{covcond}. Then there is $C = C(\{g_i\}) \in (0,\infty) $ so that, for all sufficiently large $t$ and any $z \in \R$, 
\begin{align*}
	\sC_t(\{g_i\}, z) 
	&\le C\big( \gamma_d(t)(E_0[ G(\zeta_{t/d} - z) 1(\zeta_{t/d} \ge z)] + E_0[ G'(\zeta_{t/d} - z) 1(\zeta_{t/d} \ge z)] )^2  \\
	&\qquad\quad +  \gamma_{d+1}(t) E_0[ G(\zeta_{t/d} - z) 1(\zeta_{t/d} \ge z)] E_0[ \widehat G(\zeta_{t/d} - z) 1(\zeta_{t/d} \ge z)] \big). 
\end{align*}
\end{prop}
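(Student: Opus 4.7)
The plan is to apply Lemma \ref{covintrep} to $\eta = \eta_{\{g_i\}}$ and reduce the resulting bound to functionals of the one-dimensional walk $\zeta$, using the independent-component decomposition \eqref{eq:rwindependentdecomp}. Writing $p_s(\cdot) = P_0(\zeta_s = \cdot)$ and conditioning on the first coordinate of $\zeta_s^{(d)}$,
\[
E_j[\eta_{\{g_i\}}(\zeta_s^{(d)})] = \sum_{a\le 0} p_{s/d}(a - j_1) \prod_{i=2}^d P_0(|\zeta_{s/d} + j_i| \le g_i(-a)).
\]
The gradient $D_k(j,s) := E_j[\eta_{\{g_i\}}(\zeta_s^{(d)})] - E_{j+e_k}[\eta_{\{g_i\}}(\zeta_s^{(d)})]$ has qualitatively different structure in the axial case $k = 1$ and the transverse cases $k \ge 2$; these two regimes are handled separately and produce, respectively, the two terms of the stated bound.

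For $k = 1$, summation by parts on the $a$-sum transfers the $e_1$-shift onto the transverse product, leaving a boundary term at $a = 0$ plus an interior sum over $a \le -1$ of $p_{s/d}(a - j_1)$ times the forward difference $\Delta_a\!\prod_{i \ge 2} P_0(|\zeta_{s/d} + j_i| \le g_i(-a))$. The product-rule expansion of this forward difference, together with Conditions \ref{nondecrcond} and \ref{covcond} (which yield $g_i(-a) - g_i(-a-1) \le Cg_i'(-a)$ on $\Delta(g_i)$), bounds it by $C \sum_{i \ge 2} \Psi_i(j_i; s, a) \prod_{l \ne i,\, l \ge 2} \pi_l(j_l; s, g_l(-a))$, where $\Psi_i(j_i; s, a) := P_0(g_i(-a-1) < |\zeta_{s/d} + j_i| \le g_i(-a))$ and $\pi_l(j_l;s,r) := P_0(|\zeta_{s/d}+j_l|\le r)$. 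After squaring $D_1(j, s)$ and applying Cauchy-Schwarz in $a$, the transverse summations are carried out via local central limit estimates of the form $\sum_{j_l} \pi_l(j_l;s,r)^2 \le C(2r+1)/\sqrt{s}$ (valid when $r \le O(\sqrt{s})$) and $\sum_{j_i} \Psi_i(j_i;s,a)^2 \le Cg_i'(-a)/\sqrt{s}$, which together contribute $d-1$ factors of $s^{-1/2}$. The spatial sum in $j_1$ is handled through the convolution identity $\sum_{j_1} p_{s/d}(a - j_1) P_{j_1}(\zeta_{(t-s)/d} \ge z) = P_0(\zeta_{t/d} \ge z - a)$ together with $P^2 \le P$. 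The time integral $\int_0^t s^{-(d-1)/2}\,ds \asymp \gamma_d(t)$ follows from the definition of $\gamma_d$ in \eqref{gamma-d}. Finally, the telescoping identity $\sum_{a\le 0} H(-a) P_0(\zeta_{t/d} \ge z - a) \asymp E_0\bigl[\int_0^{(\zeta_{t/d}-z)_+} H(u)\,du\bigr]$, applied with $H = G'$ (antiderivative $G$, up to Condition \ref{covcond} constants) and with $H = G$, recasts the $a$-sums as the squared expectation $(E_0[G(\zeta_{t/d}-z)1(\zeta_{t/d}\ge z)] + E_0[G'(\zeta_{t/d}-z)1(\zeta_{t/d}\ge z)])^2$, giving the first term of the bound.

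For $k \ge 2$, only the $i = k$ factor in the transverse product is shifted, so the analogous bracketed difference is a sum of two point-mass probabilities of $\zeta_{s/d}$, whose squared sum over $j_k$ is $O(s^{-1/2})$; this is one extra $s^{-1/2}$ factor compared with the $k = 1$ case, and the time integral becomes $\int_0^t s^{-d/2}\,ds \asymp \gamma_{d+1}(t)$. A Cauchy-Schwarz step in $a$ against the weight $p_{s/d}(a - j_1)$ introduces an additional $a$-summation; combined with the identity $\sum_{k\ge 2} \prod_{l \ne k,\, l \ge 2}(2g_l(-a)+1) = \widehat G'(-a)$ and the telescoping $a$-identity above, this produces the factor $E_0[\widehat G(\zeta_{t/d} - z)1(\zeta_{t/d} \ge z)]$, while the remaining convolutional sum over $j_1$ (together with the transverse summations) yields the companion factor $E_0[G(\zeta_{t/d} - z)1(\zeta_{t/d} \ge z)]$. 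Summing over $k = 2, \ldots, d$ delivers the second term of the bound.

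The main obstacle is the $k = 1$ analysis, since the gradient acts along the same coordinate as the tail event $\{\zeta^{(d)}_{t-s} \cdot e_1 \ge z\}$ in Lemma \ref{covintrep}; the summation-by-parts and successive Cauchy-Schwarz steps must be ordered so as to preserve the convolutional structure leading to $P_0(\zeta_{t/d} \ge z - a)$, while simultaneously extracting $d - 1$ factors of $s^{-1/2}$ via the local CLT on the transverse probabilities (the source of the dimension-dependent scaling $\gamma_d(t)$) and cleanly splitting the $G$ and $G'$ contributions inside the final squared expectation. The boundary term at $a = 0$ contributes at most a constant multiple of $G(0)^2 \gamma_d(t)$, which is absorbed into the first term for $t$ sufficiently large, completing the bound.
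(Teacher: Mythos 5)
Your overall frame (start from Lemma \ref{covintrep}, decompose $\zeta^{(d)}$ into independent one-dimensional components, treat the $e_1$-gradient and the transverse gradients separately, finish with the convolution identity and the sum-to-expectation conversion of Lemma \ref{sum2exp}) matches the paper, but the quantitative mechanism in your middle step does not deliver the stated bound. The crucial feature of the proposition is that the first term carries the \emph{square} of an expectation, $(E_0[G(\zeta_{t/d}-z)1(\cdot)]+E_0[G'(\zeta_{t/d}-z)1(\cdot)])^2$, and the second term a \emph{product} of two expectations. The paper obtains this by bounding, for each fixed $j$,
\[
\sum_{k\in\Z^{d-1}}\Big(E_j[f_k(-\zeta_{s/d})]\Big)^2 \;\le\; E_j\Big[\sup_k f_k(-\zeta_{s/d})\Big]\,E_j\Big[\sum_k f_k(-\zeta_{s/d})\Big],
\]
i.e.\ a sup-times-sum estimate applied \emph{after} taking the expectation over the $e_1$-coordinate of the dual walk; the sup factor produces $s^{-(d-1)/2}$ times $G'$ (resp.\ $\widehat G'$), while the sum factor keeps a genuine first-moment expectation $\mu_{j,s}(G')$ (resp.\ $\mu_{j,s}(G)$), and these two first moments are what become the squared (resp.\ product) expectation after the $j$-sum. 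Your step ``square $D_1$ and apply Cauchy--Schwarz in $a$'' pushes the square \emph{inside} the $a$-sum and therefore produces second moments, of the type $E_{j_1}[G'(-\zeta_{s/d})^2 1(\zeta_{s/d}<0)]$ in the axial case and $E_{j_1}[G(-\zeta_{s/d})\widehat G'(-\zeta_{s/d})1(\cdot)]$ in the transverse case. These are strictly larger than $\mu_{j,s}(G')^2$ and $\mu_{j,s}(G)\mu_{j,s}(\widehat G')$ (Jensen), and after the $j$-sum they give bounds of the form $\gamma_d(t)\,E_0[\int_0^{(\zeta_{t/d}-z)_+}G'(u)^2du]$ rather than $\gamma_d(t)(E_0[G\,1]+E_0[G'\,1])^2$; in the relevant regime where the tail probability is small, the single expectation is only linearly small while the claimed bound is quadratically small, so your route cannot recover the proposition (and would also break the downstream $d=2,3$ argument that relies on the squared structure). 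Your final sentence asserting that the telescoping identity ``recasts the $a$-sums as the squared expectation'' is exactly the step that has no justification in your chain.

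Two further concrete problems. First, your intermediate estimates are wrong as stated: $\sum_{j_l}\pi_l(j_l;s,r)^2\le C\sup_{j}\pi_l\cdot\sum_j\pi_l\le C(2r+1)^2/\sqrt{s}$, not $C(2r+1)/\sqrt{s}$, and similarly $\sum_{j_i}\Psi_i^2\le Cg_i'(-a)^2/\sqrt{s}$, not $Cg_i'(-a)/\sqrt{s}$; even taking your versions at face value, tracking the powers yields only a \emph{first} power of $E_0[G\,1]+E_0[G'\,1]$ multiplying $\gamma_d(t)$, again not the claimed square. Second, in the transverse case the extra factor $s^{-1/2}$ that turns $\gamma_d$ into $\gamma_{d+1}$ comes in the paper from the cancellation between the two point masses at $\pm g_i$, via the difference estimate $\sum_a|P_0(\zeta_t=a)-P_0(\zeta_t=a+b)|\le C\min\{1,|b|/\sqrt t\}$ (the paper's \eqref{eq:diffest}), used in the \emph{sum} bound $\sum_n\hat h^i_{n,s}(m)\le C(2g_i(m)+1)/(1\vee s)^{1/2}$; treating the bracket as a ``sum of two point-mass probabilities'' and squaring does not by itself produce $s^{-d/2}$ with the correct $G\cdot\widehat G'$ structure. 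Finally, the boundary term is not ``a constant multiple of $G(0)^2\gamma_d(t)$'' (which does not vanish and cannot be absorbed); it must be kept with its factor $P_j(\zeta_{s/d}=0)^2$, convolved with $P_j(\zeta_{(t-s)/d}\ge z)$ to give $G(0)^2P_0(\zeta_{t/d}\ge z)^2\gamma_d(t)$, and then absorbed using $G\ge1$, i.e.\ $P_0(\zeta_{t/d}\ge z)\le E_0[G(\zeta_{t/d}-z)1(\zeta_{t/d}\ge z)]$.
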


Furthermore, the estimate in Proposition \ref{mainbound} simplifies when $\E_{\{g_i\}}[N_t] = O(1)$ and $\{g_i\}$ also satisfy Condition \ref{smallderiv}, as stated in the following corollary. 
\begin{corollary}\label{Ccor} Suppose $g_2, \ldots, g_d$ all satisfy Conditions \ref{nondecrcond}, \ref{covcond}, and \ref{smallderiv}. If a scaling sequence $z$ is such that $\sup_{t\ge0}\E_{\{g_i\}}[N_t] < \infty$, then there is $C = C(\{g_i\}) \in (0,\infty) $ so that, for all sufficiently large $t$, 
\begin{align*}
	\sC_t(\{g_i\},z) &\le C \big( \gamma_d(t)( E_0[ G(\zeta_{t/d} - z) 1(\zeta_{t/d} \ge z)])^2 + \gamma_{d+1}(t)  E_0[ G(\zeta_{t/d} - z) 1(\zeta_{t/d} \ge z)] \big). 
\end{align*}
Moreover, 
\begin{equation}\label{eq:Gto0}
	\lim_{t\to\infty} E_0[ G(\zeta_{t/d} - z) 1(\zeta_{t/d} \ge z)]  = 0. 
\end{equation}
\end{corollary}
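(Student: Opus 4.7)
The plan is (i) to simplify the bound in Proposition \ref{mainbound} by absorbing the $E_0[G'(\zeta_{t/d}-z)1(\zeta_{t/d}\ge z)]$ term into a constant multiple of $E_0[G(\zeta_{t/d}-z)1(\zeta_{t/d}\ge z)]$ and the factor $E_0[\widehat G(\zeta_{t/d}-z)1(\zeta_{t/d}\ge z)]$ into a universal constant, and then (ii) to prove \eqref{eq:Gto0} by an Abel-summation argument using the subexponential bound from Condition \ref{smallderiv}.

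For (i), by Remark \ref{G-rmk}(a) the function $G$ itself satisfies Conditions \ref{nondecrcond}--\ref{smallderiv}; in particular $G'(u)/G(u)\to 0$, and since $G'/G$ is continuous with $G\ge 1$ we obtain $G'(u)\le CG(u)$ for all $u\ge 0$, handling the first reduction. For the second, Remark \ref{G-rmk}(b) gives $\widehat G(u)\le (d-1)\int_0^u G(v)\,dv$, so it suffices to bound $E_0[\int_0^{(\zeta_{t/d}-z)_+}G(v)\,dv]$. Combining \eqref{eq:generalNexpression}, the elementary inequality $\tilde G(k):=\prod_i(2\lfloor g_i(k)\rfloor+1)\ge 3^{-(d-1)}G(k)$ (which follows from $2\lfloor g\rfloor+1\ge(2g+1)/3$ for $g\ge 0$), and the doubling property $G(k+1)\le CG(k)$ from Lemma \ref{Gproperty}, summation by parts yields
\[
\sum_{k\ge 0}G(k)P_0(\zeta_{t/d}\ge z+k)\le G(0)P_0(\zeta_{t/d}\ge z) + C\,\E_{\{g_i\}}[N_t],
\]
and a further summation-by-parts comparison shows that $E_0[\int_0^{(\zeta_{t/d}-z)_+}G(v)\,dv]$ is dominated by the same right side. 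Since $\sup_t\E_{\{g_i\}}[N_t]<\infty$ and $P_0(\zeta_{t/d}\ge z)\to 0$ by Remark \ref{z/sqrtt}, both quantities are $O(1)$. Inserting these bounds into Proposition \ref{mainbound} yields the stated simplified estimate for $\sC_t(\{g_i\},z)$.

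For (ii), Abel summation gives
\[
E_0[G(\zeta_{t/d}-z)1(\zeta_{t/d}\ge z)] = G(0)P_0(\zeta_{t/d}\ge z) + \sum_{k\ge 1}(G(k)-G(k-1))P_0(\zeta_{t/d}\ge z+k).
\]
Condition \ref{covcond} applied to $G$ yields $G(k)-G(k-1)\le CG'(k)$, and by Condition \ref{smallderiv}, for each $\eps>0$ there is $M=M(\eps)$ with $G'(k)\le \eps G(k)$ for $k\ge M$. Splitting the sum at $M$, the head is bounded by $G(M)P_0(\zeta_{t/d}\ge z)\to 0$ (Remark \ref{z/sqrtt}), while the tail is at most $C\eps\sum_{k\ge 0}G(k)P_0(\zeta_{t/d}\ge z+k)=O(\eps)$ by the displayed estimate from (i). Letting $t\to\infty$ then $\eps\to 0$ establishes \eqref{eq:Gto0}. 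The main obstacle is the uniform-in-$t$ comparison $\sum_{k}G(k)P_0(\zeta_{t/d}\ge z+k)=O(\E_{\{g_i\}}[N_t])$ used in both steps: Lemma \ref{meanasymp} relates $\E_{\{g_i\}}[N_t]$ directly to the integral form rather than to a sum with argument $G(k)$, so one must combine the explicit representation \eqref{eq:generalNexpression} with the lower bound on $\tilde G$ and the doubling property from Lemma \ref{Gproperty} in order to close the inequality without circular self-reference; once this comparison is in place, the remaining steps are routine applications of Conditions \ref{nondecrcond}--\ref{smallderiv} and Remark \ref{G-rmk}.
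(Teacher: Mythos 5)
Your proof is correct and follows the same overall strategy as the paper's: absorb the $E_0[G'(\zeta_{t/d}-z)1(\zeta_{t/d}\ge z)]$ term via $G'\le CG$ (a consequence of Condition \ref{smallderiv} since $G'/G$ is continuous and vanishes at infinity), bound $E_0[\widehat G(\zeta_{t/d}-z)1(\zeta_{t/d}\ge z)]$ uniformly in $t$ by comparing $\int_0^{(\zeta_{t/d}-z)_+}G(v)\,dv$ with $\E_{\{g_i\}}[N_t]$ through \eqref{eq:generalNexpression} and the doubling property of Lemma \ref{Gproperty}, and then prove \eqref{eq:Gto0} by an $\eps$-splitting argument exploiting $G'/G\to 0$. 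The difference is mainly organizational: the paper cites Lemma \ref{GhatG0} (i.e.\ \eqref{eq:intGbound}) for the uniform bound and Lemma \ref{smallerfunction} (applied with $H(u)=\int_0^u G(v)\,dv$ and $\tilde H=G$, using \eqref{eq:G'/G} and Remark \ref{z/sqrtt}) for \eqref{eq:Gto0}, whereas you re-derive both inline in discrete form; your Abel-summation argument is essentially a discrete re-proof of Lemma \ref{smallerfunction}, at the harmless extra cost of invoking Condition \ref{covcond} for $G(k)-G(k-1)\le CG'(k)$, which the continuous version avoids. One small imprecision: your Abel-summation display is an identity only when $z\in\Z$; in general its right-hand side equals $E_0[G(\lfloor\zeta_{t/d}-z\rfloor)1(\zeta_{t/d}\ge z)]$, which is a \emph{lower} bound for the quantity you wish to bound from above. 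This is repaired by one more use of monotonicity and Lemma \ref{Gproperty}, namely $G(\zeta_{t/d}-z)\le G(\lfloor\zeta_{t/d}-z\rfloor+1)\le C\,G(\lfloor\zeta_{t/d}-z\rfloor)$, after which your argument goes through with an extra constant.
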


\begin{remark}\label{condCexample}
\upshape 
\begin{enumerate}[(a)]

\item
We observe that Condition \ref{smallderiv} is not needed to prove Propositions \ref{ssbound} and \ref{mainbound}. However, the bounds therein are only applicable in the proof of Theorem \ref{meta} under the additional third assumption Condition \ref{smallderiv}, since otherwise they may not be small. For example, $G(u) = e^u$ is excluded by Condition \ref{smallderiv} but covered by Conditions \ref{nondecrcond} and \ref{covcond}. Since $\int_0^u G(v)\,dv \asymp G(u) $, from Lemma \ref{meanasymp}, 
\[
	E_0[ G(\zeta_{t/d} - z) 1(\zeta_{t/d} \ge z)] \asymp \E_{\{g_i\}}[N_t], \qquad t\to\infty. 
\]
Then \eqref{eq:Gto0} is not implied by $\sup_t \E_{\{g_i\}}[N_t] < \infty$. 
Moreover, if $d \in \{2,3\}$ and $\E_{\{g_i\}}[N_t]$ converges in $(0, \infty)$, then 	
$\gamma_d(t) (E_0[ G(\zeta_{t/d} - z) 1(\zeta_{t/d} \ge z)])^2 \to \infty$.
In $d \ge 4$, we may conclude that $\sup_t \sE_t(\{g_i\}, z) < \infty$ but not that $\sE_t(\{g_i\}, z) \to 0$. 

\item Since $\eps^2 = O(\eps)$ as $\eps \to 0$ and $\gamma_{d+1}(t) \le \sqrt{\gamma_d(t)}$, when \eqref{eq:d=2,3cond} holds 
the bound for $\sC_t(\{g_i\},z)$ in Corollary \ref{Ccor} is seen to be of order $\sqrt{\gamma_d(t)} E_0[G(\zeta_{t/d} - z)1(\zeta_{t/d} \ge z)]$ for all $d \ge 2$, which gives Theorem \ref{meta}. We previously refered to \eqref{eq:d=2,3cond} as an additional geometric condition on $z$ and the shape functions $\{g_i\}$. Further discussion can be found in Subsection \ref{open problems}. 

\end{enumerate}

\end{remark}

\section{A Gumbel limit theorem}\label{gumbelsec}

Here we consider a class of polynomial shapes
where, for each $i = 2, \ldots, d$, 
\begin{equation}\label{eq:polyfdef}
	g_i(u) = c_i u^{\alpha_i} + r_i, \qquad c_i \in (0, \infty), \; \alpha_i, r_i \in [0,\infty). 
\end{equation}
Note that in the case $\alpha_i = 0$, we simply have a constant function $g_i(u) = c_i + r_i$. Let $\beta = 1 + \sum_{i=2}^d \alpha_i$
and 
\begin{equation}\label{eq:polyscaling}
	a_t = \log \Big( \frac{t}{(2\pi)^{1/(2\beta)}(\log t)^{(\beta+1)/(2\beta)}} \Big), \qquad b_t = \Big(\frac{\beta t}{d\log t}\Big)^{1/2}. 
\end{equation}

For the main result of this section, recall the definitions of the order statistics $X_t^{(m)}$, $m \ge 0$, of the process $\eta_t$ in \eqref{eq:orderstatdef}. In particular, $X_t = X_t^{(0)}$ gives the maximal particle position in the $e_1$ direction.

\begin{theorem}\label{pyramidmain} 
For $\{g_i\}$ in \eqref{eq:polyfdef}, $a_t$ and $b_t$ in \eqref{eq:polyscaling}, each $m = 0, 1, 2, \ldots$, and any $x \in \R$, 
\begin{equation}\label{eq:GumbelCDF}
	\lim_{t\to\infty}  \P_{\{g_i\}}\Big(\frac{X_t^{(m)}}{b_t} - a_t \le x \Big) = \sum_{k=0}^m \frac{(M e^{-\beta x})^k}{k!} \exp \left( - M e^{-\beta x} \right), 
\end{equation}
where 
\begin{equation}\label{eq:Mbeta}
	M =  \frac{\Gamma(\beta) }{d^{\beta/2}\beta^{(\beta+1)/2}}\Big( \prod_{\alpha_i = 0} (2 \lfloor c_i + r_i \rfloor + 1) \Big) \Big(\prod_{\alpha_i > 0} 2c_i \Big). 
\end{equation}
\end{theorem}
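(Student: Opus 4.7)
Fix $x \in \R$ and set $z = z(t,x) = b_t(x + a_t)$. By \eqref{eq:orderstatNrel},
\begin{equation*}
	\P_{\{g_i\}}\big( X_t^{(m)}/b_t - a_t \le x \big) = \P_{\{g_i\}}(N_t \le m),
\end{equation*}
so it suffices to establish $N_t \Rightarrow \text{Poisson}(Me^{-\beta x})$; the right-hand side of \eqref{eq:GumbelCDF} is then simply $\P(\text{Poisson}(Me^{-\beta x}) \le m)$. Polynomial shapes $g_i(u) = c_iu^{\alpha_i}+r_i$ satisfy Conditions \ref{nondecrcond}--\ref{smallderiv}: monotonicity is immediate, $g_i'/g_i = O(1/u) \to 0$ when $\alpha_i > 0$ (and is zero otherwise), and a Taylor expansion gives $g_i(m)-g_i(m-1) \asymp g_i'(m)$. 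Corollary \ref{poissonlimittheorem} will therefore apply provided we verify (i) $\E_{\{g_i\}}[N_t] \to Me^{-\beta x}$, and (ii) in $d \in \{2,3\}$, the geometric condition \eqref{eq:d=2,3cond}.

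For (i), split indices into $U = \{i : \alpha_i > 0\}$ and $B = \{i : \alpha_i = 0\}$. Then $G_U(u) \sim (\prod_{i \in U} 2c_i)\,u^{\beta-1}$, so $\int_0^v G_U(u)\,du \sim (\prod_{i \in U} 2c_i)\,v^\beta/\beta$. Lemma \ref{meanasymp} reduces $\E_{\{g_i\}}[N_t]$, to leading order, to
\begin{equation*}
	C'\, E_0\big[(\zeta_{t/d}-z)_+^\beta\big], \qquad C' = \frac{\prod_{i\in B}L_i \prod_{i \in U} 2c_i}{\beta},
\end{equation*}
with $L_i = 2\lfloor c_i + r_i\rfloor + 1$; the error terms vanish by the last statement of that lemma since Condition \ref{smallderiv} holds. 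Since $z \asymp \sqrt{t\log t} = o(t^{2/3})$, Lemma \ref{rw2normal} replaces $\zeta_{t/d}$ by $\sqrt{t/d}\,X$. A standard Mills-ratio tail expansion (integration by parts against $\varphi$) yields
\begin{equation*}
	E\big[(\sqrt{t/d}\,X - z)_+^\beta\big] \sim \frac{(t/d)^{(2\beta+1)/2}\,\Gamma(\beta+1)}{\sqrt{2\pi}\,z^{\beta+1}} \exp\!\Big(-\frac{dz^2}{2t}\Big).
\end{equation*}
Substituting $z = b_t(x+a_t)$ with $a_t, b_t$ as in \eqref{eq:polyscaling} and expanding $(x+a_t)^2 = a_t^2 + 2xa_t + O(1)$ inside $\exp(-dz^2/(2t))$ makes all $t$- and $\log t$-factors cancel, leaving the limit $\Gamma(\beta+1)/(d^{\beta/2}\beta^{(\beta+1)/2})\cdot e^{-\beta x}$. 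Multiplying by $C'$ and using $\Gamma(\beta+1)=\beta\Gamma(\beta)$ recovers $M$ exactly as in \eqref{eq:Mbeta}; this is the content of the referenced Lemma \ref{pyramidmean}.

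For (ii), the same expansion applied to $G(u) \asymp u^{\beta-1}$ (rather than its antiderivative) gives
\begin{equation*}
	E_0\big[G(\zeta_{t/d}-z)1(\zeta_{t/d}>z)\big] \asymp E\big[(\sqrt{t/d}\,X - z)_+^{\beta-1}\big] \asymp t^{-1/2}(\log t)^{1/2},
\end{equation*}
which is both $o(t^{-1/4})$ and $o((\log t)^{-1/2})$, so \eqref{eq:d=2,3cond} is satisfied. Combining (i) and (ii), Corollary \ref{poissonlimittheorem} yields $N_t \Rightarrow \text{Poisson}(Me^{-\beta x})$, and continuity of the Poisson mass function at each integer $m$ completes the proof. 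The only point requiring genuine effort is the Mills-ratio computation in (i): the scalings $a_t, b_t$ are calibrated so that the precise cancellation of $\sigma^{2\beta+1}/z^{\beta+1}$ against $\exp(-dz^2/(2t))$ — once the $\log\log t$ and $\log(2\pi)$ corrections in $a_t$ are accounted for — produces the advertised prefactor $M$ with no residual $t$-dependence.
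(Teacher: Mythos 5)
Your proposal is correct and follows essentially the same route as the paper: reduce via \eqref{eq:orderstatNrel} to Poisson convergence of $N_t$, compute $\lim_t \E_{\{g_i\}}[N_t] = Me^{-\beta x}$ through Lemma \ref{meanasymp} together with the Gaussian transfer and Mills-ratio asymptotics (the paper's Lemmas \ref{rw2normal} and \ref{lem:normal}, i.e.\ its Lemma \ref{pyramidmean}), and verify \eqref{eq:d=2,3cond} from the bound $E_0[G(\zeta_{t/d}-z)1(\zeta_{t/d}>z)] = O(\sqrt{\log t/t})$, which is exactly the estimate underlying the paper's Lemma \ref{errorrates}. The only cosmetic difference is that you check condition \eqref{eq:d=2,3cond} directly rather than packaging it as explicit decay rates for $\sS_t$ and $\sC_t$, which changes nothing in the argument.
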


\begin{remark}\label{polyremark} \upshape 
\begin{enumerate}[(a)]

\item \label{polyfirstorder} The form of \eqref{eq:polyfdef} here is chosen for ease of exposition. 
From the discussion in Remark \ref{mainremark}, Theorem \ref{pyramidmain} holds more generally when $g_i(u)$ is a polynomial with leading order term $c_iu^{\alpha_i}$. 

An alternate way to see this is as follows. Let $g_i$ be as in \eqref{eq:polyfdef}, and let $f_i$ be the positive part (to preserve nonnegativity) of an arbitrary polynomial with leading term $c_iu^{\alpha_i}$. Define $(N_t^{\{g_i\}}, N_t^{\{f_i\}})$ with the same scaling $z$ as a coupling of $\P_{\{g_i\}}(N_t \in \cdot)$ and $\P_{\{f_i\}}(N_t \in \cdot)$ on a space with probability measure $\tilde \P$. Then from \eqref{eq:Nintermsofstirrings}, 
\[
	\tilde \E | N_t^{\{g_i\}} - N_t^{\{f_i\}} | = O( E_0[(\zeta_{t/d} - z)^\theta] ), \qquad t \to \infty, 
\]
for some $\theta < \beta$. By the argument in \eqref{4-star} below, $E_0[(\zeta_{t/d} - z)^\theta] \to 0$, and thus $\P_{\{f_i\}}(N_t \in \cdot)$ converges to a certain Poisson distribution if and only if $\P_{\{g_i\}}(N_t \in \cdot)$ does. From our proof of Theorem \ref{pyramidmain} given below, this implies that \eqref{eq:GumbelCDF} also holds with $\{g_i\}$ replaced by $\{f_i\}$.

\item When $m = 0$, the limit distribution function in \eqref{eq:GumbelCDF} is that of a Gumbel random variable with mean 
$\mu = \gamma/\beta + \log M$,
where $\gamma \approx 0.5772$ is Euler's constant, and variance 
$\sigma^2 = \pi^2/(6\beta^2)$. 
Thus, for large $t$, 
\[
	\E_{\{g_i\}}[X_t] \approx b_t(\mu + a_t) \approx  \Big(\frac{\beta}{d}t\log t\Big)^{1/2}
\quad\text{and}\quad
	\Var_{\{g_i\}}(X_t) \approx b_t^2\sigma^2 = \frac{\pi^2}{6\beta} \frac{t}{\log t}. 
\]

\item Consider the case when $c_i = c > 0$ and $\alpha_i = 1$ for all $i$. Then, 
\[
	\P_{\{g_i\}}\Big(\frac{X_t}{b_t} - a_t \le x \Big) \to \exp\Big( - \frac{(d-1)!(2c)^{d-1}}{d^{d+1/2}}e^{-dx}\Big). 
\]
Stirling's approximation gives 
\[
	\frac{(d-1)!(2c)^{d-1}}{d^{d+1/2}}e^{-dx} = \frac{d!e^d}{d^{d+1/2}} \cdot \frac{(2ce^{-x - 1})^d}{2d} \sim \frac{\sqrt{2\pi}(2ce^{-x - 1})^d}{2d}, \qquad d \to \infty. 
\]
Thus for large $d$ the limiting distribution of $b_t^{-1}X_t-a_t$ is approximately a point mass at $\log (2c) - 1$.

\item As in \cite{ConSet23}, Theorem \ref{pyramidmain} can be extended to product measure initial conditions with a certain periodicity structure. Define $\nu$ on $\{0,1\}^{\Z^d}$ by 
\[
	\nu(\eta(x) = 1) = \rho_{x_1}, \qquad x = (x_1, \ldots, x_d) \in \sR_{\{g_i\}}, 
\]
and $\nu(\eta(x) = 1) = 0$ for $x \notin \sR_{\{g_i\}}$. Suppose that the collection $\{\rho_j : j \in \Z_-\} \subset [0,1]$ satisfies 
$\rho_{j-m} = \rho_j$ for some $m \ge 1$, 
and that $\rho_j > 0$ for at least one $j$. Then, 
\[
	\P_{\nu}\Big(\frac{X_t^{(m)}}{b_t} - a_t \le x \Big) \to \sum_{k=0}^m \frac{(\bar \rho M e^{-\beta x})^k}{k!} \exp\left( - \bar \rho M e^{-\beta x}\right), 
\]
where $\bar \rho = (1/m)\sum_{i=-m+1}^{0} \rho_i$. 

\end{enumerate}

\end{remark}

For the remainder of this section, fix $x \in \R$. Letting $z = b_t( x + a_t)$ for the scaling sequences in \eqref{eq:polyscaling}, we have
\begin{equation}\label{eq:pyramidz}
	z = \Big(\frac{\beta t}{d \log t}\Big)^{1/2} \Big(x - \frac{\log (2\pi)}{2\beta} + \log t - \frac{\beta+1}{2\beta} \log\log t \Big). 
\end{equation}
For notational convenience, we define $w = (t/d)^{-1/2}z$. Note for use throughout this section that $w \sim \sqrt{\beta \log t}$ and 
\[
	\frac{w^2}{2} = \frac{\beta}{2} \log t - \frac{\beta+1}{2} \log\log t + \beta x - \frac{1}{2}\log(2\pi) + o(1), \qquad t\to\infty. 
\]
Thus, recalling $\varphi$ denotes the standard Gaussian density function, 
\begin{equation}\label{eq:wasymp}
	\varphi(w) = \frac{(\log t)^{(\beta+1)/2}}{t^{\beta/2}}e^{-\beta x + o(1)}, \qquad t\to\infty. 
\end{equation}

To prove Theorem \ref{pyramidmain}, it suffices by \eqref{eq:orderstatNrel} to verify the conditions of Corollary \ref{poissonlimittheorem} and compute $\lim_{t\to\infty} \E_{\{g_i\}}[N_t]$. Note that $\{g_i\}$ all satisfy Conditions \ref{nondecrcond}, \ref{covcond}, and \ref{smallderiv}. 

Thus Lemma \ref{pyramidmean} below completes the proof of Theorem \ref{pyramidmain} in $d \ge 4$. However, the explicit initial profiles considered here allow us to compute quantitatively precise asymptotic rates of convergence for $\sS_t(\{g_i\}, z)$ and $\sC_t(\{g_i\}, z)$. These rates, given in Lemma \ref{errorrates}, confirm the additional requirement \eqref{eq:d=2,3cond} given in Theorem \ref{meta} for Theorem \ref{pyramidmain} to hold in $d = 2,3$. 

After Lemmas \ref{pyramidmean} and \ref{errorrates}, we provide the proof of Theorem \ref{pyramidmain}.

\begin{lemma}\label{pyramidmean} For $g_i$ in \eqref{eq:polyfdef}, $z$ in \eqref{eq:pyramidz}, and $M$ in \eqref{eq:Mbeta}, 
\[
	\lim_{t\to\infty} \E_{\{g_i\}}[N_t] = M e^{-\beta x}. 
\]
\end{lemma}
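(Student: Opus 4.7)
\textbf{Proof Plan for Lemma \ref{pyramidmean}.}

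The plan is to apply Lemma \ref{meanasymp} to reduce the computation to a Gaussian tail integral, then evaluate it. Split the indices as $U = \{i : \alpha_i > 0\}$ and $B = \{i : \alpha_i = 0\}$, so that each $g_i$ for $i \in B$ is the constant $c_i + r_i$ and $L_i = 2\lfloor c_i+r_i\rfloor +1$. Since $\{g_i\}$ all satisfy Conditions \ref{nondecrcond}--\ref{smallderiv}, Lemma \ref{meanasymp} gives
\[
\E_{\{g_i\}}[N_t] = \Big(\prod_{i \in B} L_i\Big)\, E_0\Big[ \int_0^{(\zeta_{t/d}-z)_+} G_U(u)\,du \Big] + o(1),
\]
provided the main term is bounded in $t$ (which will be verified a posteriori once the limit is computed). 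So the heart of the proof is to evaluate $\lim_{t\to\infty} E_0[\int_0^{(\zeta_{t/d}-z)_+} G_U(u)\,du]$.

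Next, I would expand the polynomial $G_U(u) = \prod_{i\in U}(2c_i u^{\alpha_i}+2r_i+1)$. Its leading term is $\big(\prod_{i\in U} 2c_i\big) u^{\beta-1}$, with remaining terms of the form $u^\theta$ for $\theta < \beta - 1$ (there are finitely many and one can treat each individually). Integrating,
\[
\int_0^v G_U(u)\,du = \frac{\prod_{i\in U} 2c_i}{\beta}\, v^\beta + P(v),
\]
where $P$ is a polynomial of degree strictly less than $\beta$. The condition $z = O(\sqrt{t\log t}) = o(t^{2/3})$ then allows Lemma \ref{rw2normal} (applied with $t$ replaced by $t/d$) to convert each expectation into its Gaussian analogue, i.e., one may replace $\zeta_{t/d}$ by $\sqrt{t/d}\,X$ in the limit, when each Gaussian limit exists. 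The problem thus reduces to computing $\lim_{t\to\infty} E[(\sqrt{t/d}\, X - z)_+^\theta]$ for $\theta \in (0,\beta]$.

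For the main term, the key Gaussian asymptotic (a standard consequence of integration by parts applied to $\int_w^\infty (u-w)^\beta \varphi(u)\,du$, using the substitution $u = w+s/w$) is
\[
E[(X-w)_+^\beta] \sim \frac{\Gamma(\beta+1)}{w^{\beta+1}}\,\varphi(w), \qquad w \to \infty.
\]
Hence $E[(\sqrt{t/d}X - z)_+^\beta] \sim (t/d)^{\beta/2}\,\Gamma(\beta+1)\,w^{-(\beta+1)}\,\varphi(w)$ with $w = (t/d)^{-1/2}z$. Substituting $w \sim \sqrt{\beta \log t}$ and the asymptotic $\varphi(w) = (\log t)^{(\beta+1)/2} t^{-\beta/2} e^{-\beta x+o(1)}$ from \eqref{eq:wasymp}, the powers of $t$ and $\log t$ cancel cleanly to yield
\[
\lim_{t\to\infty} E_0[(\zeta_{t/d} - z)_+^\beta] \;=\; \frac{\Gamma(\beta+1)}{d^{\beta/2}\,\beta^{(\beta+1)/2}}\, e^{-\beta x}.
\]
The same computation applied to each lower-order term $E[(\sqrt{t/d}X-z)_+^\theta]$ with $\theta < \beta$ produces an extra factor of $t^{(\theta-\beta)/2}$ up to powers of $\log t$, and so each such term vanishes. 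Combining with the prefactor $\prod_{i\in B}L_i \cdot \prod_{i \in U}2c_i/\beta$ and using $\Gamma(\beta+1)=\beta \Gamma(\beta)$ gives the constant $M$ in \eqref{eq:Mbeta} and hence $\lim_{t\to\infty}\E_{\{g_i\}}[N_t] = Me^{-\beta x}$; this in particular bounds the main term, justifying the a posteriori use of Lemma \ref{meanasymp}. The main obstacle is simply bookkeeping the lower-order polynomial and logarithmic factors so that all cancellations land correctly; the Gaussian tail asymptotic is elementary and the random-walk-to-Gaussian passage is supplied by Lemma \ref{rw2normal}.
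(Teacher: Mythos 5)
Your proposal is correct and follows essentially the same route as the paper: split into bounded/unbounded indices, apply Lemma \ref{meanasymp} to reduce to $E_0[\int_0^{(\zeta_{t/d}-z)_+}G_U(u)\,du]$, isolate the leading term $\frac{1}{\beta}(\prod_{i\in U}2c_i)E_0[(\zeta_{t/d}-z)_+^\beta]$, pass to the Gaussian via Lemma \ref{rw2normal}, and evaluate with Lemma \ref{lem:normal} and \eqref{eq:wasymp}, with lower-order powers $\theta<\beta$ vanishing exactly as in \eqref{4-star}. The only cosmetic difference is that the paper uses the unconditional bound \eqref{eq:ENexpressionbound} with the error terms estimated directly by $O(E_0[(\zeta_{t/d}-z)_+^\theta])$, whereas you invoke the conditional "moreover" part of Lemma \ref{meanasymp} and justify its hypothesis a posteriori; both are fine.
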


\begin{proof} Let $U = \{i : \alpha_i > 0\}$ and $B = \{i : \alpha_i = 0\}$. Then $U$ and $B$ correspond to the notation of Lemma \ref{meanasymp}. 

If $U = \varnothing$. Then $G_U \equiv 1$ and $\widehat G_U \equiv 0$. 
Otherwise, because $\beta = 1 + \sum_{i \in U} \alpha_i$, 
\[
	G_U(u) = \prod_{i \in U} (2g_i(u) + 1) = \prod_{i \in U} (2c_iu^{\alpha_i} + 2r_i + 1) = \Big( \prod_{i \in U} 2c_i \Big) u^{\beta - 1} + O(u^{\theta - 1}), 
\]
for some $\theta < \beta$. Then, 
$\int_0^u G_U(v)\,dv = (1/\beta) ( \prod_{i \in U} 2c_i ) u^\beta + O(u^\theta)$.
Moreover, 
$\widehat G_U(u) = \int_0^u \sum_{i \in U} \prod_{l \in U \setminus \{i\}} (2c_lv^{\alpha_l} + 1)\,dv = O(u^{\theta})$,
for some possibly different $\theta < \beta$. 

Thus, in general, there is $\theta < \beta$ so that  
\begin{equation}\label{eq:fromlem2.1}
	\E_{\{g_i\}}[N_t] = \frac{1}{\beta} \Big( \prod_{i \in B} (2\lfloor c_i + r_i \rfloor + 1) \Big) \Big( \prod_{i \in U} 2c_i \Big) E_0[(\zeta_{t/d} - z)_+^{\beta}] + O ( E_0[(\zeta_{t/d} - z)_+^{\theta}] ), 
\end{equation}
as $t \to \infty$,
from Lemma \ref{meanasymp}. 

Now, since $z = o(t^{2/3})$, Lemmas \ref{rw2normal} and \ref{lem:normal} along with \eqref{eq:wasymp} imply 
\begin{equation}\label{eq:betaasymp}
	E_0[ (\zeta_{t/d} - z)_+^\beta] 	\sim (t/d)^{\beta/2} E[(X - w)_+^\beta ] \sim \frac{\Gamma(\beta+1)(t/d)^{\beta/2}\varphi(w)}{w^{\beta+1}} \to \frac{\Gamma(\beta+1)}{d^{\beta/2}\beta^{(\beta+1)/2}}e^{-\beta x}.
\end{equation}
Because this last limit is finite and $0 \le \theta < \beta$, Lemma \ref{smallerfunction} implies that $E_0[(\zeta_{t/d} - z)_+^\theta] = o(1)$. Or,  Lemmas \ref{rw2normal} and \ref{lem:normal} may be used again to directly obtain 
\begin{equation}
\label{4-star}
\begin{aligned}
	E_0[(\zeta_{t/d} - z)_+^\theta] &\sim  (t/d)^{\theta/2} E[ (X - w)_+^{\theta}] \\
	&\sim \frac{\Gamma(\theta+1) (t/d)^{\theta/2}\varphi(w)}{w^{\theta+1}} \sim \frac{\Gamma(\theta+1)e^{-\beta x}}{d^{\theta/2} \beta^{(\theta+1)/2}} \Big( \frac{\log t}{t}\Big)^{(\beta - \theta)/2}  \to 0. 
\end{aligned}
\end{equation}
Combining this with \eqref{eq:fromlem2.1} and \eqref{eq:betaasymp} completes the proof. 
\end{proof}

The following lemma gives the rate of convergence of $\sE_t(\{g_i\},z) \to 0$ for $\{g_i\}$ in \eqref{eq:polyfdef} and $z$ in \eqref{eq:pyramidz}.

\begin{lemma}\label{errorrates} Let $\alpha^* = \min_{2 \le i \le d} \alpha_i$. There is a constant $C$ so that for all sufficiently large $t$, 
\begin{equation}\label{eq:polyss}
	\sS_t(\{g_i\},z) \le Ce^{-2\beta x} \sqrt{\frac{\log t}{t}}, 
\end{equation}
and 
\begin{equation}\label{eq:polycov}
	\sC_t(\{g_i\},z) \le C e^{-2\beta x} \Big( \gamma_d(t) \frac{\log t}{t} + \gamma_{d+1}(t) \Big( \frac{\log t}{t} \Big)^{(1 + \alpha^*)/2} \Big). 
\end{equation}
\end{lemma}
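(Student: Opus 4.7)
The plan is to apply Proposition \ref{ssbound} together with Lemma \ref{pyramidmean} for \eqref{eq:polyss}, and Proposition \ref{mainbound} (rather than the less sharp Corollary \ref{Ccor}, which would lose a factor of $(\log t/t)^{\alpha^*/2}$ in the second term) for \eqref{eq:polycov}. The common task is to identify the asymptotics of $E_0[F(\zeta_{t/d} - z)1(\zeta_{t/d} > z)]$ for $F \in \{G, G', \widehat G\}$, at which point plugging in to the propositional bounds gives the stated rates essentially by inspection.

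For the polynomial shapes \eqref{eq:polyfdef}, writing $G(u) = \prod_i(2c_iu^{\alpha_i} + 2r_i + 1)$ and using $\beta - 1 = \sum_i \alpha_i$, one has $G(u) \asymp u^{\beta - 1}$ and $G'(u) \asymp u^{\beta - 2}$ (or $G' \equiv 0$ when $\beta = 1$). For $\widehat G(u) = \int_0^u \sum_i \prod_{l \ne i}(2g_l(v) + 1)\,dv$, the integrand sum is dominated by the term in which the factor of lowest order is omitted, so the integrand is of order $v^{\beta - 1 - \alpha^*}$ and $\widehat G(u) \asymp u^{\beta - \alpha^*}$. When $\alpha^* = 0$ this has the same degree as $\int_0^u G(v)\,dv$; when $\alpha^* > 0$ it is strictly of smaller order, and it is precisely this gain that produces the sharper exponent in \eqref{eq:polycov}.

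Since $z = O(\sqrt{t\log t}) = o(t^{2/3})$ and the relevant functions are polynomials, Lemma \ref{rw2normal} reduces each random walk expectation to a Gaussian tail integral. With $w = \sqrt{d/t}\,z \sim \sqrt{\beta \log t}$, Lemma \ref{lem:normal} gives $E[(X - w)_+^p] \sim \Gamma(p+1)\varphi(w)/w^{p+1}$, and substituting the explicit form of $\varphi(w)$ from \eqref{eq:wasymp} yields the uniform estimate
\begin{equation*}
E_0[(\zeta_{t/d} - z)_+^p] \;\asymp\; e^{-\beta x}\Big(\frac{\log t}{t}\Big)^{(\beta - p)/2}, \qquad t \to \infty.
\end{equation*}
The three choices $p \in \{\beta - 1,\, \beta - 2,\, \beta - \alpha^*\}$ then give the rates $e^{-\beta x}\sqrt{\log t/t}$, $e^{-\beta x}(\log t)/t$, and $e^{-\beta x}(\log t/t)^{\alpha^*/2}$, respectively.

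Combining these ingredients, \eqref{eq:polyss} follows from $\sS_t(\{g_i\},z) \le \E_{\{g_i\}}[N_t]\, E_0[G(\zeta_{t/d} - z)1(\zeta_{t/d} > z)]$ together with $\E_{\{g_i\}}[N_t] = O(e^{-\beta x})$ from Lemma \ref{pyramidmean}. For \eqref{eq:polycov}, Proposition \ref{mainbound} splits $\sC_t(\{g_i\},z)$ into a piece of order $\gamma_d(t)(E_0[G] + E_0[G'])^2 \asymp \gamma_d(t) e^{-2\beta x}(\log t)/t$ (the $G'$ term being subdominant to the $G$ term) and a piece of order $\gamma_{d+1}(t)\,E_0[G]\, E_0[\widehat G] \asymp \gamma_{d+1}(t) e^{-2\beta x}(\log t/t)^{(1+\alpha^*)/2}$, whose sum is exactly the bound claimed. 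The only conceptual hurdle is recognizing that one must retain $\widehat G$ as distinct from $G$ (rather than collapsing via $\widehat G(u) \le (d-1)\int_0^u G(v)\,dv$) in order to extract the gain $(\log t/t)^{\alpha^*/2}$ over $(\log t/t)^{1/2}$; the remainder is routine asymptotic bookkeeping.
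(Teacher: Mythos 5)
Your proposal is correct and follows essentially the same route as the paper: Proposition \ref{ssbound} with Lemma \ref{pyramidmean} for \eqref{eq:polyss}, and Proposition \ref{mainbound} (not Corollary \ref{Ccor}) combined with the polynomial bounds $G(u)\lesssim 1+u^{\beta-1}$, $\widehat G(u)\lesssim 1+u^{\beta-\alpha^*}$ and the Gaussian asymptotics \eqref{4-star} for \eqref{eq:polycov}. The only cosmetic difference is the treatment of the $G'$ term, which the paper absorbs via $G'(u)\le CG(u)$ while you estimate $E_0[G'(\zeta_{t/d}-z)_+]$ directly through $G'(u)\asymp u^{\beta-2}$ and note its subdominance; both give the same rate.
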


\begin{remark} \upshape The bound in \eqref{eq:polycov} gives an $\alpha^*$-dependent rate of convergence.
However, a more simply-stated upper bound for $\{g_i\}$ of the form \eqref{eq:polyfdef} holds if we bound $(\frac{\log t}{t})^{(1+\alpha^*)/2}\leq (\frac{\log t}{t})^{1/2}$:  For
 sufficiently large $t$, 
\begin{equation}\label{eq:worstrate}
	\sC_t(\{g_i\},z) \le Ce^{-2\beta x} \times \begin{cases} \dfrac{(\log t)^{3/2}}{\sqrt{t}} &\quad d = 2 \\[1em]  \sqrt{\dfrac{\log t}{t}} &\quad d \ge 3. \end{cases}
\end{equation}

\end{remark}

\begin{proof}[Proof of Lemma \ref{errorrates}] We apply Propositions \ref{ssbound} and \ref{mainbound}. 
From Lemma \ref{pyramidmean}, $\E_{\{g_i\}}[N_t] = O(e^{-\beta x})$. Moreover, from $G(u)\leq C(1+u^{\beta-1})$ and \eqref{4-star} with $\theta = 0$ and $\theta = \beta-1$, for sufficiently large $t$ we have 
\begin{align*}
	E_0[G(\zeta_{t/d} - z)1(\zeta_{t/d} \ge z)] &\le C \big( P_0(\zeta_{t/d} \ge z) + E_0[(\zeta_{t/d} - z)_+^{\beta-1}] \big) \\
	&\le C' e^{-\beta x} \Big( \Big(\frac{\log t}{t}\Big)^{\beta/2} + \Big(\frac{\log t}{t}\Big)^{1/2} \Big) \le C'' e^{-\beta x}\Big(\frac{\log t}{t}\Big)^{1/2}. 
\end{align*}
Thus \eqref{eq:polyss} follows from Proposition \ref{ssbound}. 

Next, since $G$ is a polynomial of order $\beta - 1 \ge 0$, there is $C > 0$ so that $G'(u) \le C G(u)$ for all $u \ge 0$. Again using $G(u) \le C(1 + u^{\beta-1})$, 
\begin{align*}
	\widehat G(u) &= \int_0^u \sum_{i=2}^d \prod_{l \ne i} (2c_l v^{\alpha_l} + 2r_l + 1)\,dv \\
	&\le \int_0^u G(v)\sum_{i=2}^d \frac{1}{2c_i v^{\alpha_i} + 1} \,dv \le C\Big(1 + \int_1^u v^{\beta - 1 - \alpha^*}\,dv\Big) \le C' (1 + u^{\beta - \alpha^*}).
\end{align*}
Hence from \eqref{4-star} with $\theta = 0$ and $\theta = \beta - \alpha^*$, 
\begin{align*}
	E_0[\widehat G(\zeta_{t/d} - z)1(\zeta_{t/d} \ge z)] &\le C \big( P_0(\zeta_{t/d} \ge z) + E_0[(\zeta_{t/d} - z)_+^{\beta - \alpha^*}] \big) \\
	&\le C' e^{-\beta x} \Big(\Big(\frac{\log t}{t}\Big)^{\beta/2} + \Big(\frac{\log t}{t}\Big)^{\alpha^*/2}\Big) \le C'' e^{-\beta x}  \Big(\frac{\log t}{t}\Big)^{\alpha^*/2}, 
\end{align*}
for sufficiently large $t$. Then from Proposition \ref{mainbound}, for sufficiently large $t$, 
\begin{align*}
	\sC_t(\{g_i\},z) &\le C \big( \gamma_d(t) (E_0[G(\zeta_{t/d} - z)1(\zeta_{t/d} \ge z)])^2 \\
	&\qquad\quad + \gamma_{d+1}E_0[G(\zeta_{t/d} - z)1(\zeta_{t/d} \ge z)] E_0[\widehat G(\zeta_{t/d} - z)1(\zeta_{t/d} \ge z)] \big) \\
	&\le C' \Big( \gamma_d(t) \Big( e^{-\beta x} \Big(\frac{\log t}{t}\Big)^{1/2} \Big)^2 + \gamma_{d+1}(t) \Big( e^{-\beta x} \Big(\frac{\log t}{t}\Big)^{1/2} \Big)\Big( e^{-\beta x} \Big(\frac{\log t}{t}\Big)^{\alpha^*/2} \Big) \Big) \\
	&= C' e^{-2\beta x} \Big( \gamma_d(t) \frac{\log t}{t} + \gamma_{d+1}(t) \Big( \frac{\log t}{t} \Big)^{(1 + \alpha^*)/2} \Big). \qedhere
\end{align*}
\end{proof}

We now prove Theorem \ref{pyramidmain}.

\begin{proof}[Proof of Theorem \ref{pyramidmain}] 
Via Corollary \ref{poissonlimittheorem}, we observe that Lemmas \ref{pyramidmean} and \ref{errorrates} imply 
$N_t \Rightarrow \text{Poisson}( M e^{-\beta x})$,
as $t \to \infty$ under $\P_{\{g_i\}}$, where $M$ is as in \eqref{eq:Mbeta}. Then from \eqref{eq:orderstatNrel}, recalling $z = b_t(x+a_t)$, as desired,
\[\P_{\{g_i\}}\Big( \frac{X_t^{(m)}}{b_t} - a_t \le x \Big) = \P_{\{g_i\}}(X_t^{(m)} \le z) = \P_{\{g_i\}}(N_t \le m) \to \sum_{k=0}^m \frac{(M e^{-\beta x})^k}{k!} e^{-M e^{-\beta x}}. \qedhere\]
\end{proof}

\section{Proofs of Lemmas \ref{covintrep} and \ref{meanasymp}}
\label{helping-section}

We begin with a proof of Lemma \ref{covintrep}. 

\begin{proof}[Proof of Lemma \ref{covintrep}] From \eqref{eq:covduality} and the self-duality property \eqref{eq:selfduality}, 
\[
	\sC_t(\eta,z) = 2\underset{x \cdot e_1,y \cdot e_1 > z}{\sum_{\{x,y\} \subset \Z^d}} \left[ U_2(t) - V_2(t) \right] \eta(x)\eta(y). 
\]
The function $\eta(x)\eta(y)$ is symmetric and positive definite, so $[ U_2(t) - V_2(t)] \eta(x)\eta(y) \ge 0$ by \eqref{eq:semigroupinequality}. Integrating by parts \cite[Ch. VIII]{LigBook05}, 
\begin{align*}
	\left[ U_2(t) - V_2(t) \right] \eta(x)\eta(y) &= \int_0^t V_2(t-s)\left[ \sU_2 - \sV_2 \right] U_2(s)\eta(x)\eta(y)\,ds,  
\end{align*}
where $\sU_2$ and $\sV_2$ are the Markov generators corresponding to $U_2(t)$ and $V_2(t)$, respectively: 
\begin{align*}
	\sU_2 f(x,y) &= \frac{1}{2d} \sum_{i=1}^d \Big[ \sum_{u \in \{x \pm e_i\}} \left( f(u,y) - f(x,y) \right) + \sum_{u \in \{y \pm e_i\}} \left( f(x,u) - f(x,y) \right) \Big], \qquad\text{and} \\
	\sV_2 f(x,y) &= \frac{1}{2d} \sum_{i=1}^d \Big[ \sum_{u \in \{x \pm e_i\}\setminus\{y\}} \left( f(u,y) - f(x,y) \right) + \sum_{u \in \{y \pm e_i\}\setminus\{x\}} \left( f(x,u) - f(x,y) \right) \Big], 
\end{align*}
for functions $f(x,y)$ in their domains. 
A computation with these generators yields 
\begin{align*}
	\left[ \sU_2 - \sV_2 \right] U_2(s)\eta(x)\eta(y) &= \frac{1(|x-y|=1)}{2d} U_2(s) \left( \eta(x)\eta(x) + \eta(y)\eta(y) - 2\eta(x)\eta(y) \right) \\
	&= \frac{1(|x-y|=1)}{2d} \left( E_x[\eta(\zeta^{(d)}_s)] - E_y[\eta(\zeta^{(d)}_s)] \right)^2. 
\end{align*}
Therefore, 
\begin{align*}
	\sC_t(\eta, z) &= \frac 1 d \underset{x \cdot e_1,y \cdot e_1 > z}{\sum_{\{x,y\} \subset \Z^d}} \int_0^t V_2(t-s) 1(|x-y| = 1) \left( E_x[\eta(\zeta^{(d)}_s)] - E_y[\eta(\zeta^{(d)}_s)] \right)^2\,ds \\
	&= \frac{1}{2d} \int_0^t \sum_{x \ne y} 1(x \cdot e_1, y \cdot e_1 > z) V_2(t-s) 1(|x-y| = 1) \left( E_x[\eta(\zeta^{(d)}_s)] - E_y[\eta(\zeta^{(d)}_s)] \right)^2\,ds \\
	&=  \frac{1}{2d} \int_0^t \sum_{x \ne y} 1(|x-y| = 1) \left( E_x[\eta(\zeta^{(d)}_s)] - E_y[\eta(\zeta^{(d)}_s)] \right)^2 V_2(t-s) 1(x \cdot e_1, y \cdot e_1 > z) \,ds, 
\end{align*}
where in the last equality we used the fact that $V_2(t-s)$ is a symmetric operator. Finally, $1(x\cdot e_1, y\cdot e_1 > z)$ is a symmetric, positive definite function, so by \eqref{eq:semigroupinequality}, 
\begin{align*}
	\sC_t(\eta, z) &= \frac{1}{2d} \sum_{x \in \Z^d}\sum_{i=1}^d \sum_{y \in \{x \pm e_i\}} \int_0^t \left( E_x[\eta(\zeta^{(d)}_s)] - E_y[\eta(\zeta^{(d)}_s)] \right)^2 V_2(t-s) 1(x \cdot e_1, y \cdot e_1 > z) \,ds \\
	&\le  \frac{1}{2d} \sum_{x \in \Z^d}\sum_{i=1}^d \sum_{y \in \{x \pm e_i\}} \int_0^t \left( E_x[\eta(\zeta^{(d)}_s)] - E_y[\eta(\zeta^{(d)}_s)] \right)^2 U_2(t-s) 1(x \cdot e_1, y \cdot e_1 > z) \,ds \\
	&= \frac 1 d \sum_{x \in \Z^d} \sum_{i=1}^d \int_0^t \left( E_x[\eta(\zeta^{(d)}_s)] - E_{x+e_i}[\eta(\zeta^{(d)}_s)] \right)^2 P_x(\zeta^{(d)}_{t-s} \cdot e_1 > z)P_{x+e_i}(\zeta^{(d)}_{t-s} \cdot e_1 > z)\,ds \\
	&\le  \frac 1 d \sum_{x \in \Z^d} \sum_{i=1}^d \int_0^t \left( E_x[\eta(\zeta^{(d)}_s)] - E_{x+e_i}[\eta(\zeta^{(d)}_s)] \right)^2 P_x(\zeta^{(d)}_{t-s} \cdot e_1 \ge z)^2\,ds. \qedhere
\end{align*}
\end{proof}

We conclude the section with the proof of Lemma \ref{meanasymp}.

\begin{proof}[Proof of Lemma \ref{meanasymp}] First note that, by \eqref{eq:generalNexpression}, monotonicity of $\{g_i\}$, and Lemma \ref{sum2exp}, 
\begin{align} \nonumber 
	\E_{\{g_i\}}[N_t] &\le  \Big( \prod_{i \in B} L_i \Big) \sum_{j \ge 0} G_U(j) P_0(\zeta_{t/d} > z + j) \\ \label{eq:ENupperbound}
	&\le \Big( \prod_{i \in B} L_i \Big) E_0\Big[ \int_0^{(\zeta_{t/d} - z)_+} G_U(u)\,du \Big] + \Big( \prod_{i \in B} L_i \Big) E_0[G_U(\zeta_{t/d} - z)1(\zeta_{t/d} > z)]. 
\end{align}
Now we prove a lower bound. 

When $g_i$ is bounded, then because it is nondecreasing and continuous, it is eventually larger than $\sup_u g_i(u) - 1$. 
This means there is $J \in \Z$ large enough so that 
\[
	\prod_{i \in B} ( 2\lfloor g_i(j) \rfloor + 1) = \prod_{i \in B} L_i \quad \text{for all} \quad j > J. 
\]
Thus, from \eqref{eq:generalNexpression}, 
\begin{equation}\label{eq:equalLeventually}
\begin{aligned}
	&\Big( \prod_{i \in B} L_i \Big) \sum_{j \ge 0} \Big( \prod_{i \in U} (2 \lfloor g_i(j) \rfloor + 1) \Big)P_0(\zeta_{t/d} > z + j) - \E_{\{g_i\}}[N_t] \\
	&\le \Big( \prod_{i \in B} L_i \Big) \sum_{j = 0}^{J} \Big( \prod_{i \in U} (2 \lfloor g_i(j) \rfloor + 1) \Big)P_0(\zeta_{t/d} > z + j) \le CP_0(\zeta_{t/d} > z), 
\end{aligned}
\end{equation}
where $C$ depends on $J$, $\{L_i : i \in B\}$, and $\{g_i(J) : i \in U\}$. 

Next, Since $ (2g_i(u) + 1) - (2 \lfloor g_i(u) \rfloor + 1) \le 2$ for all $i$ and $u$, 
\[
	\prod_{i \in U} (2g_i(u) + 1) - \prod_{i \in U} (2 \lfloor g_i(u) \rfloor + 1)  \le C \widehat G_U'(u).   
\]
Note that $\widehat G_U'(u)$ is nondecreasing. 
Thus, from Lemma \ref{sum2exp} and \eqref{eq:hatG'<G}, 
\begin{align} \nonumber
	& \sum_{j \ge 0} G_U(j)P_0(\zeta_{t/d} > z + j) - \sum_{j \ge 0} \Big( \prod_{i \in U} (2 \lfloor g_i(j) \rfloor + 1) \Big)P_0(\zeta_{t/d} > z + j) \\  \nonumber
	&= \sum_{j \ge 0} \Big(  \prod_{i \in U} (2g_i(j) + 1) - \prod_{i \in U} (2 \lfloor g_i(j) \rfloor + 1) \Big) P_0(\zeta_{t/d} > z + j) \\  \nonumber
	&\le C \sum_{j \ge 0} \widehat G_U'(j) P_0(\zeta_{t/d} > z + j) \\  \nonumber
	&\le C \big(E_0[\widehat G_U(\zeta_{t/d} - z)1(\zeta_{t/d} > z)] + E_0[\widehat G_U'(\zeta_{t/d} - z)1(\zeta_{t/d} > z)] \big) \\  \label{eq:EhatGUbound}
	&\le C(d-1)\big(E_0[\widehat G_U(\zeta_{t/d} - z)1(\zeta_{t/d} > z)] + E_0[G_U(\zeta_{t/d} - z)1(\zeta_{t/d} > z)] \big). 
\end{align}

Combining \eqref{eq:equalLeventually} and \eqref{eq:EhatGUbound}, and noting $P_0(\zeta_{t/d} > z) \le E_0[G_U(\zeta_{t/d} - z)1(\zeta_{t/d} > z) ]$, we obtain 
\begin{align*}
	\E_{\{g_i\}}[N_t] &\ge \Big( \prod_{i \in B} L_i \Big) \sum_{j \ge 0} \Big( \prod_{i \in U} (2 \lfloor g_i(j) \rfloor + 1) \Big)P_0(\zeta_{t/d} > z + j) - CP_0(\zeta_{t/d} > z) \\
	&\ge \Big( \prod_{i \in B} L_i \Big) \sum_{j \ge 0} G_U(j)P_0(\zeta_{t/d} > z + j) \\
	&\quad - C'\big(E_0[G_U(\zeta_{t/d} - z)1(\zeta_{t/d} > z) ] + E_0[ \widehat G_U(\zeta_{t/d} - z)1(\zeta_{t/d} > z) ] \big) - CP_0(\zeta_{t/d} > z) \\
	&\ge \Big( \prod_{i \in B} L_i \Big) \sum_{j \ge 0} G_U(j)P_0(\zeta_{t/d} > z + j) \\
	&\quad - C''\big(E_0[G_U(\zeta_{t/d} - z)1(\zeta_{t/d} > z) ] + E_0[ \widehat G_U(\zeta_{t/d} - z)1(\zeta_{t/d} > z) ] \big). 
\end{align*}
Another application of Lemma \ref{sum2exp} gives 
\[
	\sum_{j \ge 0} G_U(j)P_0(\zeta_{t/d} > z + j) \ge E_0\Big[ \int_0^{(\zeta_{t/d}-z)_+}G_U(u)\,du \Big] - E_0[ G_U(\zeta_{t/d} - z)1(\zeta_{t/d} > z) ], 
\]
and therefore
\begin{align*}
	\E_{\{g_i\}}[N_t] &\ge \Big( \prod_{i \in B} L_i \Big) E_0\Big[ \int_0^{(\zeta_{t/d}-z)_+}G_U(u)\,du \Big] \\
	&\quad - C'\big(E_0[G_U(\zeta_{t/d} - z)1(\zeta_{t/d} > z) ] + E_0[ \widehat G_U(\zeta_{t/d} - z)1(\zeta_{t/d} > z) ] \big). 
\end{align*}
When combined with \eqref{eq:ENupperbound} this completes the proof of \eqref{eq:ENexpressionbound}.

Now suppose $\sup_{t\ge0} E_0[\int_0^{(\zeta_{t/d}-z)_+}G_U(u)\,du] < \infty$. For $H(u) = \int_0^u G_U(v)\,dv$, this and $t^{-1/2}z \to \infty$ are the hypotheses of Lemma \ref{smallerfunction}. We claim that indeed $t^{-1/2}z \to \infty$. If not, then along some subsequence denoted again by $\{t\}$, $(t/d)^{-1/2}z \to c < \infty$. Note that, since $G_U \ge 1$, for any $M > 0$ we have
\begin{align*}
	E_0\Big[\int_0^{(\zeta_{t/d}-z)_+}G_U(u)\,du\Big] &\ge E_0[(\zeta_{t/d} - z)_+] \\
	&\ge E_0[(\zeta_{t/d} - z)1(\zeta_{t/d} > z + M)] \ge MP_0(\zeta_{t/d} > z + M). 
\end{align*}
Then by the central limit theorem, 
\begin{align*}
	\liminf_{t\to\infty} E_0\Big[\int_0^{(\zeta_{t/d}-z)_+}G_U(u)\,du\Big] &\ge \liminf_{t\to\infty} MP_0\big((t/d)^{-1/2}\zeta_{t/d} > (t/d)^{-1/2}(z + M)\big) \\
	&= MP(X > c), 
\end{align*}
where we recall that $X$ is standard Gaussian. Since $P(X > c) > 0$, letting $M \to \infty$ gives a contradiction, and so we conclude that $t^{-1/2}z \to \infty$.

Now suppose that $\{g_i\}_{i \in U}$ satisfy Condition \ref{smallderiv}. 
By Remark \ref{G-rmk}, $G_U$ also satisfies Condition \ref{smallderiv}. Thus if $\tilde H = G_U$, then 
\begin{equation}\label{eq:G'/G}
	\lim_{u \to \infty} \frac{\tilde H'(u)}{H'(u)} = \lim_{u \to \infty} \frac{G_U'(u)}{G_U(u)} = 0. 
\end{equation}
Alternatively, if $\tilde H = \widehat G_U$, then 
\[
	\frac{\tilde H'(u)}{H'(u)} = \frac{\widehat G_U'(u)}{G_U(u)} = \frac{1}{G_U(u)} \sum_{i \in U} \prod_{l \in U \setminus \{i\}} (2g_l(u)+1) = \sum_{i \in U} \frac{1}{2g_i(u)+1} \to 0, 
\]
since $\{g_i\}_{i \in U}$ are unbounded. 
From Lemma \ref{smallerfunction}, we conclude that 
\[
	\lim_{t\to\infty} E_0[G_U(\zeta_{t/d} - z)1(\zeta_{t/d} > z)] = \lim_{t\to\infty} E_0[\widehat G_U(\zeta_{t/d} - z)1(\zeta_{t/d} > z)] = 0. \qedhere
\]
\end{proof}

\section{Proof of Theorem \ref{meta}}\label{mainthmproof}

Here we prove our main result, using Proposition \ref{ssbound} and Corollary \ref{Ccor}. Recall the assumptions that $g_2, \ldots, g_d$ satisfy Conditions \ref{nondecrcond}--\ref{smallderiv}, that 
\begin{equation}\label{eq:d=2,3ass}
	\gamma_d(t) ( E_0[ G(\zeta_{t/d} - z)1(\zeta_{t/d} > z)] )^2 = 0, \qquad d \in \{2, 3\}, 
\end{equation}
and that $\sup_t \E_{\{g_i\}}[N_t] < \infty$, which will be taken as given throughout this section. From Remark \ref{z/sqrtt}, this last assumption implies that $t^{-1/2}z \to \infty$. 

\begin{proof}[Proof of Theorem \ref{meta}]
From Proposition \ref{ssbound} and \eqref{eq:Gto0}, 
$\lim_{t\to\infty} \sS_t(\{g_i\},z) = 0$.

When $d \ge 4$, $\gamma_d(t) = 1$. So, the assumption \eqref{eq:d=2,3ass} along with \eqref{eq:Gto0} implies
\begin{equation}\label{eq:gammaG^2to0}
	\lim_{t\to\infty} \gamma_d(t) ( E_0[G(\zeta_{t/d} - z)1(\zeta_{t/d} \ge z)] )^2 = 0, 
\end{equation}
for each $d \ge 2$. Thus, from Corollary \ref{Ccor}, to show that $\sC_t(\{g_i\},z) \to 0$ it remains to establish that 
$\lim_{t\to\infty} \gamma_{d+1}(t) E_0[G(\zeta_{t/d} - z)1(\zeta_{t/d} \ge z)] = 0$,
for each $d \ge 2$. 

When $d \ge 3$, this is immediate from \eqref{eq:Gto0}. When $d = 2$, 
\[
	\gamma_{3}(t) E_0[G(\zeta_{t/2} - z)1(\zeta_{t/2} \ge z)] = \frac{\log t}{t^{1/4}} \big( \gamma_2(t) ( E_0[G(\zeta_{t/2} - z)1(\zeta_{t/2} \ge z)] )^2 \big)^{1/2} \to 0, 
\]
using \eqref{eq:gammaG^2to0}. Thus, $\lim_{t\to\infty} \sC_t(\{g_i\},z) = 0$. 

Finally, 
$\lim_{t\to\infty} \sE_t(\{g_i\},z) = \lim_{t\to\infty} \left( \sS_t(\{g_i\},z) + \sC_t(\{g_i\},z) \right) = 0$. 
\end{proof}

\section{Proofs of Propositions \ref{ssbound} and \ref{mainbound} and Corollary \ref{Ccor}}\label{mainboundpf} 

First we give the short proof of Proposition \ref{ssbound}, which is the estimate on the sum of squares term, $\sS_t(\{g_i\}, z)$. Recall the assumption that $g_2, \ldots, g_d$ satisfy Condition \ref{nondecrcond}.

\begin{proof}[Proof of Proposition \ref{ssbound}]
Under $\P_{\{g_i\}}$, 
\[
	\eta_t(k) = \sum_{\eta_0(y) = 1} 1(\xi_y(t) = k) =  \sum_{j \le 0} \underset{y_1 = j}
	{\sum_{y \in \sR_{\{g_i\}}}} 1(\xi_y(t) = k) \leq \sum_{j \le 0} \underset{y_1 = j}
	{\sum_{y \in \sR_{\{g_i\}}}} 1(\xi_y(t) \cdot e_1 = k_1). 
\]
Since each $g_i$ is nondecreasing, $G$ is as well. Then for $k_1 > z$, 
\begin{align*}
	\E_{\{g_i\}}[\eta_t(k)] &\leq \sum_{j \ge 0} \Big( \prod_{i=2}^d (2 \lfloor g_i(j) \rfloor + 1) \Big) P_{-j}(\zeta_{t/d} = k_1) \\
	&\le \sum_{j \ge 0} G(j) P_{-j}(\zeta_{t/d} = k_1) 
	= \sum_{j \ge 0} G(j) P_0(\zeta_{t/d} = k_1 + j)  \\
	&= E_0[ G(\zeta_{t/d} - k_1) 1(\zeta_{t/d} \ge k_1)]  \le E_0[ G(\zeta_{t/d} - z)1(\zeta_{t/d} > z)].
\end{align*}
The result then follows from 
\[
	\sS_t(\{g_i\}, z) = \underset{k_1 > z}{\sum_{k \in \Z^d}}  (\E_{\{g_i\}}[\eta_t(k)])^2 \le \E_{\{g_i\}}[N_t] \cdot \underset{k_1 > z}{\sup_{k \in \Z^d}} \E_{\{g_i\}}[\eta_t(k)]. \qedhere
\]
\end{proof}

Next, we prove Proposition \ref{mainbound}, the bound on the sum of covariances, $\sC_t(\{g_i\},z)$. From Lemma \ref{covintrep}, 
\begin{align}\nonumber
	&\sC_t(\{g_i\}, z) \\ \nonumber
	&\le \frac{1}{d} \sum_{j \in \Z^d} \sum_{i=1}^d \int_0^t P_j(\zeta_{t-s}^{(d)} \cdot e_1 \ge z)^2\left(E_j[\eta_{\{g_l\}}(\zeta^{(d)}_s)] - E_{j + e_i}[\eta_{\{g_l\}}(\zeta^{(d)}_s)]\right)^2\,ds \\ \label{eq:covintegral}
	&= \frac{1}{d} \sum_{j \in \Z} \int_0^t P_j(\zeta_{(t-s)/d} \ge z)^2 \sum_{i=1}^d \sum_{k \in \Z^{d-1}} \left(E_{(j,k)}[\eta_{\{g_l\}}(\zeta^{(d)}_s)] - E_{(j,k) + e_i}[\eta_{\{g_l\}}(\zeta^{(d)}_s)]\right)^2\,ds, 
\end{align}
where we recall that $\zeta^{(d)}_t$ is a continuous time simple random walk on $\Z^d$ and $\zeta_t = \zeta_t^{(1)}$ is a continuous time simple random walk on $\Z$. Recall also that $P_j$ is the measure under which the random walk is at $j$ at time $0$, with corresponding expectation denoted $E_j$.

The main estimate for the proof of Proposition \ref{mainbound} is given in the following lemma. Recall the functions $G$ and $\widehat G$ from Definition \ref{capitalfunctions}. The estimate is stated using the following notation: For $j \in \Z$, $t \ge 0$, and $f : \R_+ \to \R_+$, let 
\[
	\mu_{j,t}(f) = E_j[f(-\zeta_{t/d})1(\zeta_{t/d} \le 0)], 
\]
when the expectation exists. 

\begin{lemma}\label{positivealphabound} 
Suppose $g_2, \ldots, g_d$ satisfy Conditions \ref{nondecrcond} and \ref{covcond}. 
For some constant $C > 0$ and any $j \in \Z$, 
\begin{align*}
	&\sum_{i=1}^d 	\sum_{k \in \Z^{d-1}} \left(E_{(j,k)}[\eta_{\{g_l\}}(\zeta^{(d)}_s)] - E_{(j,k)+e_i}[\eta_{\{g_l\}}(\zeta^{(d)}_s)]\right)^2 \\
	&\le  C \Big(\frac{G(0)^2P_j(\zeta_{s/d} = 0)^2 + \mu_{j,s}(G')^2}{ (1 \vee s)^{(d-1)/2} } + \frac{ \mu_{j,s}(G)\mu_{j,s}(\widehat G')}{(1 \vee s)^{d/2}} \Big). 
\end{align*}. 
\end{lemma}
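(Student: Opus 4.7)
The plan is to combine self-duality with the independent-coordinate decomposition \eqref{eq:rwindependentdecomp} to express each discrete gradient as a convolution against the boundary of $\sR_{\{g_l\}}$, and then to handle $i = 1$ (axial) and $i = l_0 \in \{2, \ldots, d\}$ (lateral) separately, producing respectively the first and second terms of the stated bound. Self-duality gives
$\Delta_i(j, k) := E_{(j,k)}[\eta_{\{g_l\}}(\zeta^{(d)}_s)] - E_{(j,k)+e_i}[\eta_{\{g_l\}}(\zeta^{(d)}_s)] = \sum_y (\eta_{\{g_l\}}(y) - \eta_{\{g_l\}}(y + e_i)) p^{(d)}_s(y - (j, k))$,
which together with $p^{(d)}_s(z) = \prod_{l=1}^d p_{s/d}(z_l)$ reduces the analysis to one-dimensional random walk functionals. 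The main technical tools are the local CLT bound $p_{s/d}(n) \le C/\sqrt{s \vee 1}$, its discrete-derivative counterpart $|p_{s/d}(n+1) - p_{s/d}(n)| \le C/(s \vee 1)$, and the convolution identity $\sum_k p_{s/d}(a - k) p_{s/d}(b - k) = p_{2s/d}(b - a)$.

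For the lateral case $i = l_0$, the function $\eta_{\{g_l\}}(y) - \eta_{\{g_l\}}(y + e_{l_0})$ is supported on the two plates $\{y_{l_0} = \lfloor g_{l_0}(-y_1) \rfloor\}$ (with sign $+1$) and $\{y_{l_0} = -\lfloor g_{l_0}(-y_1) \rfloor - 1\}$ (with sign $-1$), subject to $|y_l| \le g_l(-y_1)$ for the remaining lateral coordinates. This gives
\[
\Delta_{l_0}(j, k) = \sum_{y_1 \le 0} p_{s/d}(y_1 - j) \bigl[ p_{s/d}(a - k_{l_0}) - p_{s/d}(-a - 1 - k_{l_0}) \bigr] \prod_{l \ne 1, l_0} \phi_l(y_1, k_l),
\]
with $a = \lfloor g_{l_0}(-y_1) \rfloor$ and $\phi_l(y_1, k) = P_k(|\zeta_{s/d}| \le g_l(-y_1))$. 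Squaring and summing over $k \in \Z^{d-1}$, the convolution identity reduces the $k_{l_0}$-sum to $2(p_{2s/d}(a' - a) - p_{2s/d}(a + a' + 1))$; telescoping this difference across $2a + 1$ unit steps, together with the discrete-derivative bound, yields an estimate of at most $C G_{l_0}(-y_1)/(s \vee 1)$. For $l \ne 1, l_0$, the pointwise LCLT bound $\phi_l(y_1, k) \le C G_l(-y_1)/\sqrt{s \vee 1}$ combined with $\sum_k \phi_l(y_1, k) \le C G_l(-y_1)$ yields $\sum_{k_l} \phi_l(y_1, k) \phi_l(y_1', k) \le C G_l(-y_1) G_l(-y_1')/\sqrt{s \vee 1}$. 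Multiplying through and summing over $y_1, y_1' \le 0$ with weight $p_{s/d}(y_1 - j) p_{s/d}(y_1' - j)$ produces
\[
\sum_{k \in \Z^{d-1}} \Delta_{l_0}(j, k)^2 \le C\, \mu_{j,s}(G)\, \mu_{j,s}(H_{l_0})/(s \vee 1)^{d/2},
\]
where $H_{l_0}(u) = \prod_{l \ne 1, l_0} G_l(u)$; then summing over $l_0 \in \{2, \ldots, d\}$ and invoking $\sum_{l_0} H_{l_0} = \widehat G'$ (from Definition \ref{capitalfunctions}) yields the second term.

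For the axial case $i = 1$, the function $\eta_{\{g_l\}}(y) - \eta_{\{g_l\}}(y + e_1)$ is supported on the front face $\{y_1 = 0,\, |y_l| \le g_l(0) \text{ for } l \ge 2\}$, of cardinality at most $G(0)$, together with, at each depth $y_1 \le -1$, the shell $\{|y_l| \le g_l(-y_1)\} \setminus \{|y_l| \le g_l(-y_1 - 1)\}$. The telescoping identity $\prod_l a_l - \prod_l b_l = \sum_l (a_l - b_l) \prod_{l' < l} b_{l'} \prod_{l' > l} a_{l'}$, combined with Condition \ref{covcond} (which supplies $g_l(m) - g_l(m-1) \le C g_l'(m)$), bounds the depth-$(-y_1)$ shell size by $C G'(-y_1)$. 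Writing $\Delta_1(j, k) = \sum_{y_1 \le 0} p_{s/d}(y_1 - j) A_{y_1}(k)$, where $A_{y_1}(k)$ is the convolution of the depth-$y_1$ boundary against $\prod_{l=2}^d p_{s/d}(\cdot)$, Cauchy-Schwarz in $k$ and the LCLT on each of the $d-1$ lateral coordinates yield $\sum_k A_{y_1}(k) A_{y_1'}(k) \le C |\mathrm{Bdy}(y_1)| |\mathrm{Bdy}(y_1')| / (s \vee 1)^{(d-1)/2}$. Summing over $y_1, y_1'$ and applying $(a + b)^2 \le 2(a^2 + b^2)$ gives the first term $C (p_{s/d}(-j)^2 G(0)^2 + \mu_{j,s}(G')^2)/(s \vee 1)^{(d-1)/2}$, after noting $p_{s/d}(-j) = P_j(\zeta_{s/d} = 0)$.

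The principal obstacle is sharpening the $k_{l_0}$-sum bound from the naive LCLT estimate $C/\sqrt{s \vee 1}$ to $C G_{l_0}(-y_1)/(s \vee 1)$. This improvement of order $\sqrt{s}/G_{l_0}$ is what pairs with the LCLT bounds on the remaining lateral coordinates to produce the desired $(s \vee 1)^{d/2}$ exponent, and it relies on telescoping $p_{2s/d}(a' - a) - p_{2s/d}(a + a' + 1)$ over $2a + 1$ unit steps together with the uniform discrete-derivative bound. A secondary technical point is the use of this one-sided asymmetric bound (rather than a symmetrized version) so that the $y_1, y_1'$ double sum factors directly as $\mu_{j,s}(G) \cdot \mu_{j,s}(H_{l_0})$ and no additional Cauchy-Schwarz weakening is incurred.
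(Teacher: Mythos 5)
Your proposal is correct and follows the same skeleton as the paper's proof: the same split into the axial gradient $i=1$, whose support $\sR_{\{g_l\}}\setminus(\sR_{\{g_l\}}-e_1)$ (front face of size at most $G(0)$ plus depth-$m$ shells of size $O(G'(m))$ via Condition \ref{covcond}) produces the first term, and the lateral gradients $i\ge 2$, whose two-plate support produces the second, with coordinates factored via \eqref{eq:rwindependentdecomp}. The only substantive difference is mechanical, in how the powers of $s$ are extracted in the lateral case: the paper pairs the uniform kernel bound \eqref{eq:supsquareroot} with the $\ell^1$-smoothing estimate \eqref{eq:diffest} through a sup-times-sum argument (Lemma \ref{Hboundlemma}), collecting $(1\vee s)^{-1/2}$ from each of the sup and the $k_i$-sum of the plate difference, whereas you expand the square, use the Chapman--Kolmogorov identity $\sum_{k} p_{s/d}(a-k)p_{s/d}(a'-k)=p_{2s/d}(a-a')$, and telescope $p_{2s/d}(a-a')-p_{2s/d}(a+a'+1)$ with the pointwise gradient bound $|p_t(n+1)-p_t(n)|\le C/(1\vee t)$, concentrating the full factor $(1\vee s)^{-1}$ on the $k_{l_0}$-coordinate; both bookkeepings give $(1\vee s)^{-d/2}$ and the same factorization $\mu_{j,s}(G)\mu_{j,s}(\widehat G')$ after summing $\sum_{l_0}\prod_{l\ne 1,l_0}(2g_l+1)=\widehat G'$. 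Your axial computation (convolution plus the LCLT sup on each of the $d-1$ lateral coordinates, then $(a+b)^2\le 2(a^2+b^2)$) is likewise equivalent to the paper's treatment of the $H^i$ functions. Two housekeeping points: the pointwise gradient estimate $|p_t(n+1)-p_t(n)|\le C/(1\vee t)$ is standard (e.g., via Fourier inversion) but is not among the two estimates the paper records, so it should be stated with a proof or citation; and your bound of the depth-$m$ shell cardinality by $CG'(m)$ uses Condition \ref{covcond} to control integer counts in exactly the same (floor-function-glossing) way the paper does in \eqref{eq:suph}--\eqref{eq:sumh}, so it stands on the same footing as the published argument.
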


Before providing the proof of the above lemma, we establish several estimates that are used therein. 
Throughout this section, we will denote elements $k \in \Z^{d-1}$ with shifted indices, namely $k = (k_2, \ldots, k_d)$. This way the indices match those for $(j, k) \in \Z^d$ when $j \in \Z$. 

\begin{lemma}\label{Hboundlemma} Suppose $g_2, \ldots, g_d$ satisfy Conditions \ref{nondecrcond} and \ref{covcond}. 
For $2 \le i \le d$, $s > 0$, $k \in \Z^{d-1}$, and integer $m \ge 1$, define 
\[
	H^i_{k,s}(m) = P_{k_i}(g_i(m-1) < |\zeta_{s/d}| \le g_i(m)) \prod_{l \in \{2, \ldots, d\} \setminus \{i\}} P_{k_l}(|\zeta_{s/d}| \le g_l(m)), 
\]
and for $m \ge 0$, 
\begin{align*}
	\widehat H^i_{k,s}(m) &= \left| P_{k_i}(g_i(m) - 1 < \zeta_{s/d} \le g_i(m)) - P_{k_i}(-g_i(m) - 1 \le \zeta_{s/d} < - g_i(m)) \right| \\
	&\qquad \times \prod_{l \in \{2, \ldots, d\} \setminus \{i\}} P_{k_l}(|\zeta_{u/d}| \le g_l(m)). 
\end{align*}
There is $C>0$ so that the following bounds hold for each $m$ and $s$. First, 
\begin{equation}\label{eq:Hbounds}
	\sum_{i=2}^d \sup_{k \in \Z^{d-1}} H^i_{k,s}(m) \le C(1 \vee s)^{-(d-1)/2} G'(m)  \qquad \mbox{and} \qquad
	\sum_{i=2}^d \sum_{k \in \Z^{d-1}} H^i_{k,s}(m) \le C G'(m). 
\end{equation}
Second, 
\begin{equation}\label{eq:tildeHbounds}
\begin{aligned}
	&\sum_{i =2}^d \sup_{k \in \Z^{d-1}} \widehat H^i_{k,s}(m) \le C(1 \vee s)^{-(d-1)/2} \widehat G'(m), 
	\qquad \mbox{and} \\
&\sum_{i =2}^d \sum_{k \in \Z^{d-1}} \widehat H^i_{k,s}(m) \le C (1 \vee s)^{-1/2} G(m).
\end{aligned}
\end{equation}
\end{lemma}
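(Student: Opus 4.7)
The plan is to bound each of the four quantities by combining two standard random-walk inputs with Condition~\ref{covcond}. The first input is the local CLT estimate $\sup_{j\in\Z}P_0(\zeta_{s/d}=j)\le C/\sqrt{1\vee s}$, which by summation over integers in an interval yields $\sup_{k}P_k(|\zeta_{s/d}|\le R)\le C(2R+1)/\sqrt{1\vee s}$ for any $R\ge 0$, and likewise $\sup_{k_i}P_{k_i}(g_i(m-1)<|\zeta_{s/d}|\le g_i(m))\le C(g_i(m)-g_i(m-1))/\sqrt{1\vee s}$ for a thin shell; Condition~\ref{covcond} then converts the shell width into $C g_i'(m)/\sqrt{1\vee s}$. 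The second input is the translation-invariance identity $\sum_{k\in\Z}P_k(\zeta_{s/d}\in I)=|I\cap\Z|$, which will collapse each inner sum over a coordinate of $k$ into an integer count with no $1/\sqrt{s}$ factor.

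For the $H^i$ bounds I would multiply the thin-shell factor with the $d-2$ ball factors indexed by $l\ne i$. In the sup version each ball factor contributes $C(2g_l(m)+1)/\sqrt{1\vee s}$, producing a term of the form $C(1\vee s)^{-(d-1)/2}g_i'(m)\prod_{l\ne i}(2g_l(m)+1)$; summing over $i$ and recognising $G'(m)=2\sum_i g_i'(m)\prod_{l\ne i}(2g_l(m)+1)$ by the product rule yields the first inequality. In the sum version, translation invariance converts $\sum_{k_l}P_{k_l}(|\zeta_{s/d}|\le g_l(m))$ to $\le 2g_l(m)+1$ without any $1/\sqrt{s}$ factor, and $\sum_{k_i}P_{k_i}(\mathrm{shell})$ is bounded by the shell's integer count, again absorbed into $Cg_i'(m)$ via Condition~\ref{covcond}; summing over $i$ reproduces $G'(m)$ and closes the second inequality.

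For the $\widehat H^i$ bounds, the boundary probabilities $P_{k_i}(g_i(m)-1<\zeta_{s/d}\le g_i(m))$ and $P_{k_i}(-g_i(m)-1\le\zeta_{s/d}<-g_i(m))$ each concern a single integer site, so their sup is $\le C/\sqrt{1\vee s}$ and their sum over $k_i$ equals $1$; their difference inherits both estimates. For the sup inequality I would multiply by the $d-2$ ball factors in local-CLT form, giving $C(1\vee s)^{-(d-1)/2}\prod_{l\ne i}(2g_l(m)+1)$, and sum over $i$ to form $\widehat G'(m)=\sum_i\prod_{l\ne i}(2g_l(m)+1)$. For the sum inequality I would collapse the $k_i$ sum via translation invariance and sum the $k_l$ for all but a single chosen index $l_0\ne i$, keeping the $l_0$ factor in local-CLT form so as to generate exactly one factor of $(1\vee s)^{-1/2}$; summing over $i$ then absorbs the remaining integer counts $\prod_{l\ne i}(2g_l(m)+1)$ into a term of order $G(m)$.

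The main technical obstacle is controlling the thin-shell integer count in the $H^i$ bounds when the shape functions grow slowly. When $g_i(m)-g_i(m-1)<1$ the shell contains either zero or one integer on each side and its probability oscillates sporadically, so the bound $Cg_i'(m)/\sqrt{1\vee s}$ must absorb not only the continuous shell width but also the discrete ``$+1$'' arising from counting a single integer in a short interval; organising the estimates so that Condition~\ref{covcond} forces the derivative $g_i'(m)$ to dominate both contributions is the crux of the computation. A parallel but easier issue appears in the $\widehat H^i$ analysis, where the single-integer nature of each boundary term makes the bookkeeping transparent once the difference structure is recognised.
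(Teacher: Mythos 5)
Your treatment of the two $H^i$ bounds and of the sup bound on $\widehat H^i$ follows the paper's argument essentially verbatim (local CLT bound $\sup_{a,b}P_a(\zeta_{s/d}=b)\le C/\sqrt{1\vee s}$, translation invariance for the sums, Condition \ref{covcond} to convert shell widths into $g_i'(m)$, and the product rule to assemble $G'(m)$ and $\widehat G'(m)$). The flagged ``thin shell containing one integer'' issue is handled in the paper exactly as you would have to handle it, by bounding the lattice count in $(g_i(m-1),g_i(m)]$ by the increment and invoking \eqref{eq:AandBcond}, so that is not where your proposal diverges.

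The genuine gap is in the second inequality of \eqref{eq:tildeHbounds}. You bound the $k_i$-sum of the boundary difference by the triangle inequality (each single-site probability sums to $1$ over starting points), and then try to recover the factor $(1\vee s)^{-1/2}$ from a ball factor at some other coordinate $l_0\ne i$. This discards the cancellation between the two boundary terms, and it fails outright when $d=2$: there is no coordinate $l_0\ne i$, so your method only yields $\sum_{k\in\Z}\widehat H^2_{k,s}(m)\le 2$, which is strictly weaker than $C(2g_2(m)+1)(1\vee s)^{-1/2}$ when $g_2$ is bounded and $s$ is large (e.g.\ the strip/line profiles of Case 1). The paper instead observes that $\hat h^i_{n,s}(m)$ is the absolute difference of the probabilities of two \emph{single sites}, $\lfloor g_i(m)\rfloor$ and $\lceil -g_i(m)-1\rceil$, and applies the smoothing estimate \eqref{eq:diffest}, $\sum_a|P_0(\zeta_t=a)-P_0(\zeta_t=a+b)|\le C\min\{1,|b|/\sqrt{t}\}$, to get $\sum_n\hat h^i_{n,s}(m)\le C(2g_i(m)+1)(1\vee s)^{-1/2}$ uniformly in $d\ge2$; this cancellation is the key input you are missing. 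The loss is not cosmetic: the $(1\vee s)^{-1/2}$ in the sum bound is what produces the $(1\vee s)^{-d/2}$ in Lemma \ref{positivealphabound} and hence the $\gamma_{d+1}(t)$ (rather than $\gamma_d(t)$) prefactor on the cross term in Proposition \ref{mainbound}, which is exactly what is needed in $d=2,3$ for Theorem \ref{meta} and the Gumbel limit. (For $d\ge3$ your route does close, since it gives $\widehat G'(m)(1\vee s)^{-1/2}\le (d-1)G(m)(1\vee s)^{-1/2}$ by \eqref{eq:hatG'<G}, but the lemma must hold for all $d\ge2$.)
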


\begin{proof} We use two general random walk estimates: There is $C > 0$ such that 
\begin{equation}\label{eq:supsquareroot}
	\sup_{a, b \in \Z} P_a(\zeta_t = b) \le \frac{C}{\sqrt{t}}, 
\end{equation}
and for any $b \in \Z$, 
\begin{equation}\label{eq:diffest}
	\sum_{a \in \Z} \left| P_0(\zeta_t = a) - P_0(\zeta_t = a + b) \right| \le  C \min \Big\{1,  \frac{|b|}{\sqrt{t}} \Big\}. 
\end{equation}
Proofs can be found in \cite{LawLim10} and the Appendix of \cite{ConSet23}. 
These bounds applied to the following quantities will imply the result of the lemma:
For each $n \in \Z$, let 
\begin{align*}
	h^i_{n,s}(m) &= P_{n}(g_i(m-1) < |\zeta_{s/d}| \le g_i(m)), \\ 
	\hat h^i_{n,s}(m)&= \left| P_n(g_i(m) - 1 < \zeta_{s/d} \le g_i(m)) - P_n(-g_i(m) - 1 \le \zeta_{s/d} < -g_i(m)) \right|, \qquad\text{and} \\
	f^i_{n,s}(m) &= P_{n}(|\zeta_{s/d}| \le g_i(m)). 
\end{align*}

The bound \eqref{eq:supsquareroot}
implies 
\begin{equation}\label{eq:supf}
\begin{aligned}
	\sup_n f^i_{n,s}(m) &= \sup_n \sum_{-g_i(m) \le l \le g_i(m)} P_n(\zeta_{s/d} = l) \\
	&\le (2g_i(m)+1)\min\Big\{ 1, \frac{C}{\sqrt{s/d}} \Big\} \le C'\frac{2g_i(m)+1}{(1 \vee s)^{1/2}}, 
\end{aligned}
\end{equation}
for each $i$.
On the other hand, 
\begin{equation}\label{eq:sumf}
	\sum_{n \in \Z} f^i_{n,s}(m) = \sum_{-g_i(m) \le l \le g_i(m)} \sum_{n \in \Z} P_0(\zeta_{s/d} = l - n) \le 2g_i(m) + 1. 
\end{equation}

The bound \eqref{eq:supsquareroot} and Condition \ref{covcond} give 
\begin{equation}\label{eq:suph}
\begin{aligned}
	\sup_n h^i_{n,s}(m) &\le  \sum_{g_i(m-1) < l \le g_i(m)} \sup_n P_n(\zeta_{s/d} = l) + \sum_{-g_i(m) \le l < -g_i(m-1)} \sup_n P_n(\zeta_{s/d} = l) \\
	&\le \frac{C'}{(1 \vee s)^{1/2}} (g_i(m) - g_i(m-1)) 
	 \le \frac{C''}{(1 \vee s)^{1/2}} g_i'(m). 
\end{aligned}
\end{equation}
(Note that in the case $g_i(m-1) = g_i(m)$, $\sup_n h^i_{n,s}(m) = 0$ and the above bound is still valid.)   
By similar arguments, 
\begin{equation}\label{eq:sumh}
\begin{aligned}
	\sum_{n\in\Z} h^i_{n,s}(m) &= \sum_{g_i(m-1) < l \le g_i(m)} \sum_{n \in \Z} P_n(\zeta_{s/d} = l) \le g_i(m) - g_i(m-1) \le C g_i'(m). 
\end{aligned}
\end{equation}

Next, note that 
\begin{align*}
	\hat h^i_{n,s}(m) &=  \left| P_n(\zeta_{s/d} = \lfloor g_i(m) \rfloor) - P_n(\zeta_{s/d} = \lceil - g_i(m) - 1 \rceil) \right| \\
	&=  \left| P_0(\zeta_{s/d} = \lfloor g_i(m) \rfloor - n) - P_0(\zeta_{s/d} = \lceil - g_i(m) - 1 \rceil - n) \right|. 
\end{align*}
Then \eqref{eq:supsquareroot} implies 
\begin{equation}\label{eq:suptildeh}
	\sup_n \hat h^i_{n,s}(m) \le \frac{C'}{(1 \vee s)^{1/2}}, 
\end{equation}
and \eqref{eq:diffest} implies
\begin{equation}\label{eq:sumtildeh}
\begin{aligned}
	\sum_n \hat h^i_{n,s}(m) &\le C \min\Big\{1,  \frac{(\lfloor g_i(m) \rfloor - \lceil - g_i(m) - 1 \rceil )}{\sqrt{s}} \Big\}  \le C' \frac{2g_i(m)+1}{(1 \vee s)^{1/2}}. 
\end{aligned}
\end{equation}

Now, it follows from \eqref{eq:supf} and \eqref{eq:suph} that 
\begin{align*}
	\sup_{k \in \Z^{d-1}} H^i_{k,s}(m) &= \sup_{k \in \Z^{d-1}} h^i_{k_i,s}(m) \prod_{l \ne i} f^l_{k_l,s}(m) \\
	&\le \Big(\sup_{n \in \Z} h^i_{n,s}(m)\Big) \prod_{l \ne i} \sup_{n \in \Z} f^l_{n,s}(m) \le \frac{C}{(1 \vee s)^{(d-1)/2}} g_i'(m) \prod_{l \ne i} (2g_l(m)+1). 
\end{align*}
Thus, 
\begin{align*}
	\sum_{i=2}^d \sup_{k \in \Z^{d-1}} H^i_{k,s}(m) &\le \frac{C}{(1 \vee s)^{(d-1)/2}} \sum_{i=2}^d g_i'(m) \prod_{l \ne i} (2g_l(m) +1)\\
	&= \frac{C/2}{(1 \vee s)^{(d-1)/2}} \Big(\prod_{i=2}^d (2g_i(m)+1)\Big)' =  \frac{C/2}{(1 \vee s)^{(d-1)/2}}G'(m). 
\end{align*}
This establishes the first inequality in \eqref{eq:Hbounds}. Furthermore, from \eqref{eq:sumf} and \eqref{eq:sumh} we obtain 
\begin{align*}
	\sum_{k \in \Z^{d-1}} H^i_{k,s}(m) &= \sum_{k \in \Z^{d-1}} h^i_{k_i,s}(m) \prod_{l \ne i} f^l_{k_l,s}(m) \\
	&= \Big( \sum_{n \in \Z} h^i_{n,s}(m) \Big) \prod_{l \ne i} \sum_{n \in \Z} f^l_{n,s}(m) \le C g_i'(m) \prod_{l \ne i} (2g_l(m)+1), 
\end{align*}
so that 
$\sum_{i=2}^d \sum_{k \in \Z^{d-1}} H^i_{k,s}(m) \le C G'(m)$, which is
the second inequality in \eqref{eq:Hbounds}. 

The bounds involving $\widehat H^i_{k,s}(m)$ in \eqref{eq:tildeHbounds} follow similarly. First, using \eqref{eq:supf} and \eqref{eq:suptildeh},   
\begin{align*}
	\sum_{i = 2}^d \sup_{k \in \Z^{d-1}} \widehat H^i_{k,s}(m) &= \sum_{i = 2}^d \sup_{k \in \Z^{d-1}} \hat h^i_{k_i,s}(m) \prod_{l \in\{2, \ldots, d\}\setminus\{i\}} f^l_{k_l,s}(m) \\
	&\le  \frac{C}{(1 \vee s)^{(d-1)/2}} \sum_{i = 2}^d \prod_{l \ne i} (2g_l(m)+1) = \frac{C\widehat G'(m)}{(1 \vee s)^{(d-1)/2}}. 
\end{align*}
Moreover, using \eqref{eq:sumf} and \eqref{eq:sumtildeh}, 
\begin{align*}
	\sum_{i = 2}^d \sum_{k \in \Z^{d-1}} \widehat H^i_{k,s}(m) &= \sum_{i=2}^d \Big(\sum_{n\in \Z} \hat h^i_{n,s}(m) \Big) \prod_{l \in \{2, \ldots, d\}\setminus\{i\}} \sum_{n \in \Z}f^l_{n,s}(m) \\
	&\le C \sum_{i=2}^d \frac{2g_i(m)+1}{(1 \vee s)^{1/2}}  \prod_{l \ne i} (2g_l(m)+1) = C' \frac{G(m)}{(1 \vee s)^{1/2}}. 
	\qedhere
\end{align*}
\end{proof}

Now we turn to the proof of Lemma \ref{positivealphabound}. 

\begin{proof}[Proof of Lemma \ref{positivealphabound}] This proof has two parts. First, we show that, for each $j \in \Z$ and $s > 0$, 
\begin{equation}\label{eq:i=1}
\begin{aligned}
	&\sum_{k \in \Z^{d-1}} \left(E_{(j,k)}[\eta_{\{g_l\}}(\zeta^{(d)}_s)] - E_{(j+1,k)}[\eta_{\{g_l\}}(\zeta^{(d)}_s)]\right)^2 \\
	&\le C(1 \vee s)^{-(d-1)/2} \big(  G(0)^2P_j(\zeta_{s/d} = 0)^2 + (E_j[G'(-\zeta_{s/d})1(\zeta_{s/d} < 0)])^2  \big). 
\end{aligned}
\end{equation}
Second, we show 
\begin{equation}\label{eq:i=2:d}
\begin{aligned}
	&\sum_{i=2}^d 	\sum_{k \in \Z^{d-1}} \left(E_{(j,k)}[\eta_{\{g_l\}}(\zeta^{(d)}_s)] - E_{(j,k)+e_i}[\eta_{\{g_l\}}(\zeta^{(d)}_s)]\right)^2 \\
	&\le C (1 \vee s)^{-d/2} E_j[G(-\zeta_{s/d})1(\zeta_{s/d} \le 0)] E_j[\widehat G'(-\zeta_{s/d})1(\zeta_{s/d} \le 0)] . 
\end{aligned}
\end{equation}
Together \eqref{eq:i=1} and \eqref{eq:i=2:d} imply the result. 

\medskip

\textbf{Proof of \eqref{eq:i=1}.} Recall that $\sH_d$ denotes the half-space $\sH_d = \{x \in \Z^d : x_1 \le 0\}$. 
For each $i = 2, \ldots, d$, define the set
\begin{equation}\label{eq:Ri}
	\sR^{(i)} = \{x \in \sH_d : -g_l(-x_1) \le x_l \le g_l(-x_1), \; l \in \{2, \ldots, d\} \setminus \{i\}\}. 
\end{equation}
(When $d=2$, $\sR^{(2)}= \sH_2$.)   
Since the $\{g_l\}$ are nondecreasing, 
$\sR_{\{g_l\}} - e_1 \subset \sR_{\{g_l\}}$. Moreover, 
\begin{align*}
	\sR_{\{g_l\}} \setminus (\sR_{\{g_l\}} - e_1) &= \{x_1 = 0, |x_i| \le g_i(0), i = 2, \ldots, d\} \\
	&\quad \cup \Big[  \{x_1 < 0\} \cap \bigcup_{i=2}^d \left( \sR^{(i)} \cap \{g_i(-x_1 -1) < |x_i| \le g_i(-x_1)\} \right) \Big]. 
\end{align*}
This identity is depicted in Figure \ref{fig:covcalc} (a) for the $d = 2$ case. 

\begin{figure}[t]
\captionsetup{width=.9\linewidth}
\centering
\qquad\quad
\hspace*{\fill}%
\begin{subfigure}[b]{0.3\textwidth}
\centering
\begin{tikzpicture}[scale=0.6]

\draw[->] (-6.5,0) -- (1,0) node[below] {\footnotesize $x_1$};
\draw[->] (0,-3.5) -- (0,3.5) node[right] {\footnotesize $x_2$};

\pgfmathsetmacro{\p}{2}
\pgfmathsetmacro{\m}{0.05}
\pgfmathsetmacro{\b}{4}
\pgfmathsetmacro{\s}{0.25}

\draw[scale=\s, domain=-14.5:0, smooth, variable=\x, very thick] plot ({\x},{\m*(-\x)^(\p) + \b});
\draw[scale=\s, domain=-14.5:0, smooth, variable=\x, very thick] plot ({\x},{-\m*(-\x)^(\p) - \b});
\draw[smooth, very thick] (0,\s*\b) -- (0,-\s*\b);

\draw[scale=\s, domain=-17.5:-3, smooth, variable=\x, very thick, dashed] plot ({\x},{\m*(-\x-3)^(\p) + \b});
\draw[scale=\s, domain=-17.5:-3, smooth, variable=\x, very thick, dashed] plot ({\x},{-\m*(-\x-3)^(\p) - \b});
\draw[smooth, very thick, dashed] (-\s*3,\s*\b) -- (-\s*3,-\s*\b);

\node[right] at (-\s*13.5, 3.5) {\footnotesize $g_2(-x_1)$};
\node[right] at (-\s*13.5, -3.5) {\footnotesize $-g_2(-x_1)$};

\node[left] at (-\s*17.5, 3.5) {\footnotesize $g_2(-x_1-1)$};
\node[left] at (-\s*17.5, -3.5) {\footnotesize $-g_2(-x_1-1)$};

\draw[->,ultra thick] (0,0) -- (-\s*3,0) node[above left] {\footnotesize $-e_1$};

\node[left] at (-\s*12,-\s*\b) {\footnotesize $\sR_{g_2} \setminus (\sR_{g_2} - e_1)$};
\draw (-\s*12,-\s*\b) -- (-\s*9.5, -\s*\b*1.85) ;

\end{tikzpicture}
\caption{}
\end{subfigure}\qquad\qquad\hfill%
\begin{subfigure}[b]{0.3\textwidth}
\centering
\begin{tikzpicture}[scale=0.6]

\draw[->] (-6.5,0) -- (1,0) node[below] {\footnotesize $x_1$};
\draw[->] (0,-3.5) -- (0,3.5) node[right]  {\footnotesize $x_2$};

\pgfmathsetmacro{\p}{0.55}
\pgfmathsetmacro{\m}{3}
\pgfmathsetmacro{\b}{0.5}
\pgfmathsetmacro{\s}{0.25}

\draw[scale=\s, domain=-16:0, smooth, variable=\x, very thick] plot ({\x},{\m*(-\x)^(\p) + \b});
\draw[scale=\s, domain=-16:0, smooth, variable=\x, very thick] plot ({\x},{-\m*(-\x)^(\p) - \b});

\draw[scale=\s, domain=-16:0, smooth, variable=\x, very thick, dashed] plot ({\x},{\m*(-\x)^(\p) + \b - 4});
\draw[scale=\s, domain=-6:0, smooth, variable=\x, very thick, dashed] plot ({\x},{-\m*(-\x)^(\p) - \b - 4});

\node[left] at (-\s*16, 3.5) {\footnotesize $g_2(-x_1)$};
\node[left] at (-\s*16, 3.5 - \s*4) {\footnotesize $g_2(-x_1)-1$};

\node[left] at (-\s*16, -3.5) {\footnotesize $-g_2(-x_1)$};
\node at (-\s*8, -3.5) {\footnotesize $-g_2(-x_1)-1$};

\draw (-\s*5, \s*5) -- (-\s*9, \s*3) node[left] {\footnotesize $A$};
\draw (-\s*5, -\s*9) -- (-\s*9, -\s*7) node[left] {\footnotesize $B$};

\draw[->,ultra thick] (0,0) -- (0,-\s*4) node[right] {\footnotesize $-e_2$};

\end{tikzpicture}
\caption{}
\end{subfigure}%
\qquad\qquad
\hspace*{\fill}%
\caption{\footnotesize Illustrations of the calculations in the proof of Lemma \ref{positivealphabound} in $\Z^2$ with initial profile $\eta_{g_2}(x) = 1(x \in \sR_{g_2})$, $\sR_{g_2} = \{x : x_1 \le 0, |x_2| \le g_2(-x_1)\}$. The boundary of $\sR_{g_2}$ is shown as a solid line. 
(a) The region $\sR_{g_2} \setminus (\sR_{g_2} - e_1)$, where the boundary of $\sR_{g_2} - e_1$ is shown as a dashed line. (b) $1_{\sR_{g_2}} - 1_{\sR_{g_2} - e_2} = 1_A - 1_B$, where $A = \{x : x_1 \le 0, g_2(-x_1)-1 < x_2 \le g_2(-x_1)\}$ and $B = \{x : x_1 \le 0, -g_2(-x_1)-1 \le x_2 < -g_2(-x_1)\}$. The boundary of $\sR_{g_2}-e_2$ is shown as a dashed line.}\label{fig:covcalc}
\end{figure}
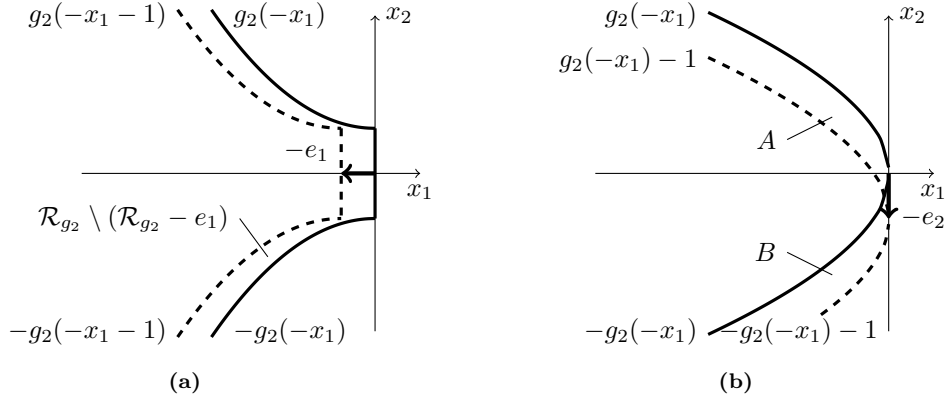

Using \eqref{eq:rwindependentdecomp}, for $j \in \Z$ and $k \in \Z^{d-1}$ 
we have 
\begin{align*}
	&E_{(j,k)}[\eta_{\{g_l\}}(\zeta^{(d)}_s)] - E_{(j+1,k)}[\eta_{\{g_l\}}(\zeta^{(d)}_s)] 
	= P_{(j,k)}(\zeta^{(d)}_s \in \sR_{\{g_l\}}) - P_{(j+1,k)}(\zeta^{(d)}_s \in \sR_{\{g_l\}}) \\
	&= P_{(j,k)}(\zeta^{(d)}_s \in \sR_{\{g_l\}} \setminus (\sR_{\{g_l\}} - e_1)) \\
	&\le P_{(j,k)}(\zeta_s^{(d)} \cdot e_1 = 0,  |\zeta_s^{(d)} \cdot e_i| \le g_i(0), i \in \{2, \ldots, d\}) \\
	&\quad + \sum_{i=2}^d P_{(j,k)} (\zeta_s^{(d)} \cdot e_1 < 0, g_i( -\zeta_s^{(d)} \cdot e_1-1) < |\zeta_s^{(d)} \cdot e_i| \le g_i(-\zeta_s^{(d)}\cdot e_1), \zeta^{(d)}_s \in \sR^{(i)})  \\
	&= P_j(\zeta_{s/d} = 0) \prod_{i=2}^d P_{k_{i}}( |\zeta_{s/d}| \le g_l(0)) \\
	&\quad + \sum_{i=2}^d \sum_{m > 0} P_j(\zeta_{s/d} = -m)  P_{k_{i}}(g_i(m-1) < |\zeta_{s/d}| \le g_i(m)) \prod_{l \in \{2, \ldots, d\}\setminus \{i\}} P_{k_{l}}(|\zeta_{s/d}| \le g_l(m)). 
\end{align*}
(When $d=2$, the empty product above is by convention set to $1$.)

In the notation of Lemma \ref{Hboundlemma}, this says that
\begin{align*}
	0 &\le E_{(j,k)}[\eta_{\{g_l\}}(\zeta_s^{(d)})] - E_{(j+1,k)}[\eta_{\{g_l\}}(\zeta_s^{(d)})] \\
	&\le P_j(\zeta_{s/d} = 0) \prod_{i=2}^d P_{k_{i}}(|\zeta_{s/d}| \le g_i(0)) + \sum_{i=2}^dE_j[ H^i_{k,s}(-\zeta_{s/d})1(\zeta_{s/d} < 0)], 
\end{align*}
which implies 
\begin{equation}
\label{eq:sumHsquared}
\begin{aligned}
	& \sum_{k \in \Z^{d-1}} \big(E_{(j,k)}[\eta_{\{g_l\}}(\zeta_s^{(d)})] - E_{(j+1,k)}[\eta_{\{g_l\}}(\zeta_s^{(d)})]\big)^2 \\
	& \le 2  \sum_{k \in \Z^{d-1}}  P_j(\zeta_{s/d} = 0)^2 \prod_{i=2}^d P_{k_{i}}(|\zeta_{s/d}| \le g_i(0))^2
	 + 2 \sum_{k \in \Z^{d-1}}  \Big( \sum_{i=2}^dE_j[ H^i_{k,s}(-\zeta_{s/d})1(\zeta_{s/d} < 0)]\Big)^2. 
\end{aligned}
\end{equation}
We now estimate the first term on the right hand side of the previous display.
Using the inequality in \eqref{eq:supf}, 
\[
	P_{k_i}(|\zeta_{s/d}| \le g_i(0)) \le \frac{C(2g_i(0)+1)}{(1 \vee s)^{1/2}}, 
\]
for each $i$. Then, using the above bound followed by the bound in \eqref{eq:sumf}, 
\begin{equation}\label{eq:Hsquaredbound1}
\begin{aligned}
	& \sum_{k \in \Z^{d-1}}  P_j(\zeta_{s/d} = 0)^2 \prod_{i=2}^d P_{k_{i}}(|\zeta_{s/d}| \le g_i(0))^2 \\
	&\le \frac{CG(0)P_j(\zeta_{s/d} = 0)^2}{(1 \vee s)^{(d-1)/2}}  \prod_{i=2}^d \sum_{k_i \in \Z} P_{k_i}(|\zeta_{s/d}| \le g_i(0)) \le \frac{C'G(0)^2P_j(\zeta_{s/d} = 0)^2}{(1 \vee s)^{(d-1)/2}}. 
\end{aligned}
\end{equation}

Next, we bound the second quantity in \eqref{eq:sumHsquared} using \eqref{eq:Hbounds}: 
\begin{align} \nonumber
	&\sum_{k \in \Z^{d-1}} \Big( \sum_{i=2}^d E_j[ H^i_{k,s}(-\zeta_{s/d})1(\zeta_{s/d} < 0)] \Big)^2 \\ \nonumber 
	&\le  E_j\Big[  \sum_{i=2}^d \sup_{k \in \Z^{d-1}} H^i_{k,s}(-\zeta_{s/d})1(\zeta_{s/d} < 0)\Big] E_j\Big[  \sum_{i=2}^d\sum_{k \in \Z^{d-1} } H^i_{k,s}(-\zeta_{u/d})1(\zeta_{s/d} < 0)\Big] \\ \label{eq:Hsquaredbound2}
	&\le \frac{C}{(1 \vee s)^{(d-1)/2}} \left( E_j[ G'(-\zeta_{s/d}) 1(\zeta_{s/d} < 0) ] \right)^2. 
\end{align}
Together \eqref{eq:sumHsquared}, \eqref{eq:Hsquaredbound1}, and \eqref{eq:Hsquaredbound2} establish \eqref{eq:i=1}. 

\medskip

 \textbf{Proof of \eqref{eq:i=2:d}.} Let $i \in \{2, \ldots, d\}$. Recalling the notation $\sR^{(i)}$ from \eqref{eq:Ri}, 
\begin{align*}
	&1(x \in \sR_{\{g_l\}}) - 1(x \in \sR_{\{g_l\}} - e_i)  \\
	&= 1(x_1 \le 0, x \in \sR^{(i)}) \big( 1(g_i(-x_1) - 1 < x_i \le g_i(-x_1)) 
	- 1(-g_i(-x_1) - 1 \le x_i < -g_i(-x_1)) \big). 
\end{align*}
This identity is depicted in Figure \ref{fig:covcalc} (b) for the $d=2$ case. 

Then, using \eqref{eq:rwindependentdecomp}, for $(j,k) \in \Z^d$, 
\begin{align*}
	&\big| E_{(j,k)} [\eta_{\{g_l\}}(\zeta_s^{(d)})] - E_{(j,k) + e_i}[\eta_{\{g_l\}}(\zeta_s^{(d)})] \big| = \big| P_{(j,k)}(\zeta_s^{(d)} \in \sR_{\{g_l\}}) - P_{(j,k)}(\zeta_s^{(d)} \in \sR_{\{g_l\}} - e_i) \big| \\
	&= \big| P_{(j,k)}(\zeta_s^{(d)} \cdot e_1 \le 0,  g_i(-\zeta_s^{(d)} \cdot e_1) - 1 < \zeta_s^{(d)} \cdot e_i \le  g_i(-\zeta_s^{(d)} \cdot e_1), \zeta_s^{(d)} \in \sR^{(i)})  \\
	&\qquad - P_{(j,k)}(\zeta_s^{(d)} \cdot e_1 \le 0, -g_i(-\zeta_s^{(d)} \cdot e_1) - 1 \le \zeta_s^{(d)} \cdot e_i < - g_i(-\zeta_s^{(d)} \cdot e_1), \zeta_s^{(d)} \in \sR^{(i)}) \big| \\
	&\le \sum_{m \ge 0} P_j(\zeta_{s/d} = -m) \big|P_{k_{i}}(g_i(m) - 1 < \zeta_{s/d} \le g_i(m))  \\
	&\qquad -  P_{k_{i}}(-g_i(m)-1 \le \zeta_{s/d} < -g_i(m)) \big| \prod_{l \in \{2, \ldots, d\}\setminus \{i\}} P_{k_{l}}(-g_l(m) \le \zeta_{s/d} \le g_l(m)) \\
	&= E_j[ \widehat H_{k,s}^i(-\zeta_{s/d})1(\zeta_{s/d} \le 0) ],
\end{align*}
where the last equality uses the notation from Lemma \ref{Hboundlemma}.

It follows that 
\begin{equation}\label{eq:secondbound}
\begin{aligned}
	&\sum_{i=1}^d 	\sum_{k \in \Z^{d-1}} \big(E_{(j,k)}[\eta_{\{g_l\}}(\zeta_s^{(d)})] - E_{(j,k)+e_i}[\eta_{\{g_l\}}(\zeta_s^{(d)})]\big)^2 \\
		& \le \sum_{i = 2}^d \sum_{k \in \Z^{d-1}} \big( E_j[ \widehat H^i_{k,s}(-\zeta_{s/d})1(\zeta_{s/d} \le 0)] \big)^2. 
\end{aligned}
\end{equation}

Now, applying both inequalities in \eqref{eq:tildeHbounds} to the quantity on the right hand side of the above display, 
\begin{align*}
	&\sum_{i = 2}^d \sum_{k \in \Z^{d-1}} \big( E_j[ \widehat H^i_{k,s}(-\zeta_{s/d})1(\zeta_{s/d} \le 0)] \big)^2 \\
	&\le \sum_{i=2}^d E_j\Big[ \sup_{k \in \Z^{d-1}} \widehat H^i_{k,s}(-\zeta_{s/d})1(\zeta_{s/d} \le 0) \Big] E_j\Big[ \sum_{k \in \Z^{d-1}}\widehat H^i_{k,s}(-\zeta_{s/d})1(\zeta_{s/d} \le 0) \Big] \\
	&\le  E_j\Big[ \sum_{i=2}^d\sup_{k \in \Z^{d-1}} \widehat H^i_{k,s}(-\zeta_{s/d})1(\zeta_{s/d} \le 0) \Big] E_j\Big[\sum_{i=2}^d \sum_{k \in \Z^{d-1}} \widehat H^i_{k,s}(-\zeta_{s/d})1(\zeta_{s/d} \le 0) \Big] \\
	&\le \frac{C}{(1 \vee s)^{d/2}} E_j[ \widehat G'(-\zeta_{s/d})1(\zeta_{s/d} \le 0)] E_j [G(-\zeta_{s/d}) 1(\zeta_{s/d} \le 0) ]. 
\end{align*}
Combined with \eqref{eq:secondbound}, this completes the proof of \eqref{eq:i=2:d}.
\end{proof}

Now we prove Proposition \ref{mainbound}. 

\begin{proof}[Proof of Proposition \ref{mainbound}]
Recall that $\gamma_2(t) = \sqrt{t}$, $\gamma_3(t) = \log t$, and $\gamma_d(t) = 1$ for $d \ge 4$. There is a constant $C>0$ so that for all $d$ and all sufficiently large $t$, 
\begin{equation}\label{eq:gammaintegral}
	\gamma_d(t) \ge C \int_0^t \frac{ds}{(1 \vee s)^{(d-1)/2}}. 
\end{equation}

 Now, beginning from \eqref{eq:covintegral}, 
applying Lemma \ref{positivealphabound}, then using $\sum_j a_j^2 \le (\sum_j a_j)^2$ for $a_j \ge 0$, we obtain 
\begin{align} \nonumber 
	\sC_t(\{g_i\}, z) 
	&\le C \sum_{j \in \Z} \int_0^t P_j(\zeta_{(t-s)/d} \ge z)^2 \Big(\frac{G(0)^2P_j(\zeta_{s/d} = 0)^2 + \mu_{j,s}(G')^2}{ (1 \vee s)^{(d-1)/2} }  + \frac{ \mu_{j,s}(G)\mu_{j,s}(\widehat G')}{(1 \vee s)^{d/2}} \Big)\,ds  \\ \nonumber
	&\le C \int_0^t \bigg[ \Big( \sum_{j \in \Z} G(0) P_j(\zeta_{s/d} = 0)P_j(\zeta_{(t-s)/d} \ge z) \Big)^2 \\ \label{eq:firstCbound}
	&\qquad\qquad\quad +  \Big( \sum_{j \in \Z} \mu_{j,s}(G')P_j(\zeta_{(t-s)/d} \ge z) \Big)^2 \bigg] \frac{ds}{(1 \vee s)^{(d-1)/2}} \\ \nonumber
	&\quad + C \int_0^t \Big( \sum_{j \in \Z} \mu_{j,s}(G)\mu_{j,s}(\widehat G')P_j(\zeta_{(t-s)/d} \ge z)^2\Big) \frac{ds}{(1 \vee s)^{d/2}}.  
\end{align}
 
From the first equality in Lemma \ref{jsum}, 
\begin{equation}\label{eq:jsumG(0)}
	 \sum_{j \in \Z} G(0) P_j(\zeta_{s/d} = 0)P_j(\zeta_{(t-s)/d} \ge z) = G(0) P_0(\zeta_{t/d} \ge z), 
\end{equation}
and 
\begin{equation}\label{eq:jsumG'}
\begin{aligned}
	\sum_{j \in \Z} \mu_{j,s}(G')P_j(\zeta_{(t-s)/d} \ge z) &= \sum_{j \in \Z} E_j[ G'(-\zeta_{s/d})1(\zeta_{s/d} \le 0)]P_j(\zeta_{(t-s)/d} \ge z) \\
	&= \sum_{k \le 0} G'(-k) P_k(\zeta_{t/d} \ge z). 
\end{aligned}
\end{equation}
Moreover, 
\begin{align*}
	 &\sum_{j \in \Z} \mu_{j,s}(G)\mu_{j,s}(\widehat G')P_j(\zeta_{(t-s)/d} \ge z)^2 \\
	 &= \sum_{j \in \Z} E_j[G(- \zeta_{s/d})1(\zeta_{s/d} \le 0)]E_j[ \widehat G'(-\zeta_{s/d})1(\zeta_{s/d} \le 0)]P_j(\zeta_{(t-s)/d} \ge z)^2 \\
	 &\le \sup_{j \in \Z} E_j[G(- \zeta_{s/d})1(\zeta_{s/d} \le 0)]P_j(\zeta_{(t-s)/d} \ge z) \cdot \sum_{j \in \Z} E_j[ \widehat G'(-\zeta_{s/d})1(\zeta_{s/d} \le 0)]P_j(\zeta_{(t-s)/d} \ge z). 
\end{align*}
Since $G$ is nondecreasing, applying both statements of Lemma \ref{jsum} gives
\[
	\sum_{j \in \Z} E_j[ \widehat G'(-\zeta_{s/d})1(\zeta_{s/d} \le 0)]P_j(\zeta_{(t-s)/d} \ge z) = \sum_{k \le 0} \widehat G'(-k) P_k(\zeta_{t/d} \ge z), 
\]
and 
\[
	\sup_{j \in \Z} E_j[G(- \zeta_{s/d})1(\zeta_{s/d} \le 0)]P_j(\zeta_{(t-s)/d} \ge z) \le  E_0[G(\zeta_{t/d} - z)1(\zeta_{t/d} \ge z)] . 
\]
It follows that 
\begin{equation}\label{eq:jsumGhatG'}
\begin{aligned}
	&\sum_{j \in \Z} \mu_{j,s}(G)\mu_{j,s}(\widehat G')P_j(\zeta_{(t-s)/d} \ge z)^2\\
	& \le  E_0[G(\zeta_{t/d} - z)1(\zeta_{t/d} \ge z)] \sum_{k \le 0} \widehat G'(-k) P_k(\zeta_{t/d} \ge z).
\end{aligned}
\end{equation}

Together, \eqref{eq:gammaintegral}, \eqref{eq:firstCbound}, \eqref{eq:jsumG(0)}, \eqref{eq:jsumG'}, and \eqref{eq:jsumGhatG'} imply, noting $G'\geq 0$, that for all sufficiently large $t$, 
\begin{align} \nonumber
	\sC_t(\{g_i\}, z) &\le C \bigg[ G(0)^2P_0(\zeta_{t/d} \ge z)^2 + \Big(\sum_{k \le 0} G'(-k) P_k(\zeta_{t/d} \ge z) \Big)^2 \bigg] \int_0^t \frac{ds}{(1 \vee s)^{(d-1)/2}} \\ \nonumber
	&\quad + C E_0[G(\zeta_{t/d} - z)1(\zeta_{t/d} \ge z)] \Big(\sum_{k \le 0} \widehat G'(-k) P_k(\zeta_{t/d} \ge z) \Big) \int_0^t \frac{ds}{(1 \vee s)^{d/2}} \\
	\begin{split}\label{eq:Cboundinsums}
		&\le C' \gamma_d(t) \Big(G(0)P_0(\zeta_{t/d} \ge z) + \sum_{k \ge 0} G'(k) P_{-k}(\zeta_{t/d} \ge z) \Big)^2 \\
		&\quad + C' \gamma_{d+1}(t) E_0[G(\zeta_{t/d} - z)1(\zeta_{t/d} \ge z)] \sum_{k \ge 0} \widehat G'(k) P_{-k}(\zeta_{t/d} \ge z). 
	\end{split}
\end{align}
It remains to bound the quantities in the above display in terms of the appropriate expectations. 

Applying Lemma \ref{sum2exp} and using that $G$ is nondecreasing gives 
\begin{equation}\label{eq:sumGbound}
\begin{aligned}
	&G(0)P_0(\zeta_{t/d} \ge z) + \sum_{k \ge 0} G'(k) P_{-k}(\zeta_{t/d} \ge z) \\
	&\le 2E[G(\zeta_{t/d} - z)1(\zeta_{t/d} \ge z)] + E[G'(\zeta_{t/d} - z)1(\zeta_{t/d} \ge z)]. 
\end{aligned}
\end{equation}
Also from Lemma \ref{sum2exp} and \eqref{eq:hatG'<G}, 
\begin{equation}\label{eq:sumhatGbound}
\begin{aligned}
	\sum_{k \ge 0} \widehat G'(k) P_{-k}(\zeta_{t/d} \ge z) &\le E[\widehat G(\zeta_{t/d} - z)1(\zeta_t \ge z)] + E[\widehat G'(\zeta_{t/d} - z)1(\zeta_{t/d} \ge z)] \\
	&\le E[\widehat G(\zeta_{t/d} - z)1(\zeta_t \ge z)] + (d-1)E[G(\zeta_{t/d} - z)1(\zeta_{t/d} \ge z)]. 
\end{aligned}
\end{equation}

Finally, \eqref{eq:Cboundinsums}, \eqref{eq:sumGbound}, \eqref{eq:sumhatGbound}, and $\gamma_{d+1}(t) \le \gamma_d(t)$ yield 
\begin{align*}
	\sC_t(\{g_i\}, z) 
	&\le C \gamma_d(t) ( E[G(\zeta_{t/d} - z)1(\zeta_{t/d} \ge z)] + E[G'(\zeta_{t/d} - z)1(\zeta_{t/d} \ge z)] )^2 \\
	&\quad + C \gamma_{d+1}(t) E_0[G(\zeta_{t/d} - z)1(\zeta_{t/d} \ge z)] E[\widehat G(\zeta_{t/d} - z)1(\zeta_{t/d} \ge z)]. \qedhere
\end{align*}
\end{proof}

After the following two lemmas, we provide the proof of Corollary \ref{Ccor}. Recall the assumption that $\{g_i\}$ satisfy all Conditions \ref{nondecrcond}--\ref{smallderiv}. Recall also that this implies that $G$ satisfies these conditions as well (Remark \ref{G-rmk}). Moreover, we assume that we have a scaling sequence $z$ such that  $\sup_t \E_{\{g_i\}}[N_t] < \infty$, which implies $t^{-1/2}z \to \infty$ (Remark \ref{z/sqrtt}). 

\begin{lemma}\label{Gproperty} Suppose $g_2, \ldots, g_d$ satisfy Conditions \ref{nondecrcond}--\ref{smallderiv}. Then there is $C \in (0,\infty)$ so that $G(j+1) \le C G(j)$ for all nonnegative integers $j$. 
\end{lemma}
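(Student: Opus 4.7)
The plan is to exploit Condition (C) applied to $G$ itself in order to control the ratio $G(j+1)/G(j)$ for all sufficiently large $j$, and then absorb the remaining finitely many ratios into a constant. First I would record two structural facts: $G(u) \ge 1$ for every $u$, since each factor $2g_i(u)+1$ is at least $1$, so $\log G$ is well-defined; and by Remark \ref{G-rmk}(a), $G$ inherits Conditions (A), (B), and (C) from the $g_i$, so in particular $G'(u)/G(u) \to 0$ as $u \to \infty$.

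Given these, the argument is a short calculation. Fix $\varepsilon = 1/2$ and choose $u_0 \ge 0$ so that $G'(u)/G(u) \le 1/2$ for all $u \ge u_0$. Then for any integer $j \ge u_0$, the fundamental theorem of calculus applied to $\log G$ gives
\[
\log \frac{G(j+1)}{G(j)} \;=\; \int_j^{j+1} \frac{G'(u)}{G(u)}\,du \;\le\; \frac{1}{2},
\]
so $G(j+1) \le e^{1/2} G(j)$. For the finitely many nonnegative integers $j < u_0$, each ratio $G(j+1)/G(j)$ is finite because $G$ is continuous and bounded below by $1$; letting $C_0$ be their maximum and setting $C = \max\{C_0, e^{1/2}\}$ produces the required universal bound for all $j \ge 0$.

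The only non-trivial input is the assertion that $G$ itself satisfies Condition (C). If one does not wish simply to quote Remark \ref{G-rmk}(a), I would verify this by writing
\[
\frac{G'(u)}{G(u)} \;=\; \sum_{i=2}^d \frac{2 g_i'(u)}{2 g_i(u)+1},
\]
and checking that every summand vanishes at infinity: when $g_i$ is unbounded the factor $2g_i(u)/(2g_i(u)+1) \to 1$ reduces the term to essentially $g_i'(u)/g_i(u) \to 0$, while when $g_i$ is bounded above, $g_i'(u)/g_i(u) \to 0$ combined with the boundedness of $g_i$ forces $g_i'(u) \to 0$, and the denominator stays bounded away from $0$. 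I expect this elementary verification to be the main (and quite minor) obstacle, as the remaining step is a routine integration of the logarithmic derivative.
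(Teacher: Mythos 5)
Your proof is correct, but it goes by a genuinely different route than the paper's. The paper works at the level of discrete increments: it first uses Condition \ref{covcond} to show that if $G$ is nonconstant then $G'(j)>0$ for all large integers $j$, then bounds $G(j+1)-G(j)\le CG'(j+1)$ (again Condition \ref{covcond}), and finally invokes Condition \ref{smallderiv} to get $\limsup_j G(j+1)/G(j)\le \limsup_j\bigl(1-CG'(j+1)/G(j+1)\bigr)^{-1}=1$. You instead integrate the logarithmic derivative, $\log\bigl(G(j+1)/G(j)\bigr)=\int_j^{j+1}G'(u)/G(u)\,du\le 1/2$ for large $j$, which uses only Condition \ref{smallderiv} for $G$ (via Remark \ref{G-rmk}(a), or your direct computation $G'/G=\sum_i 2g_i'/(2g_i+1)$, which conveniently never divides by $g_i$ and so handles the bounded and degenerate cases cleanly), together with $G\ge 1$ and continuous differentiability. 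Your argument is more elementary, avoids the paper's slightly delicate step about $G'(j)=0$ forcing constancy on $[0,j]$, and in fact shows Condition \ref{covcond} is not needed for this lemma at all; it also handles the finitely many small $j$ explicitly, which the paper leaves implicit (harmless, since $G\ge 1$). The paper's approach, by contrast, stays entirely within the discrete-increment framework that Condition \ref{covcond} was designed for, so it reuses hypotheses already in play elsewhere, but buys nothing extra here.
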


\begin{proof} We need only consider the case where $G$ is not a constant function. 
From Condition \ref{covcond}, if $G'(j) = 0$ for an integer $j$, then $G(j-1) = G(j)$. Then from Condition \ref{nondecrcond}, $G$ is constant on $[j-1,j]$, which means $G'(j-1) = 0$. Iterating this argument, we see that $G$ is constant on $[0,j]$. 
As $G$ is nonconstant, there must be $J$ large enough so that $G'(j) > 0$ for all integers $j \ge J$. 

Then by Condition \ref{covcond}, there is $C>0$ so that $G(j+1) - G(j) \le C G'(j+1)$ for all $j \ge J$. Hence,
$G'(j+1)/G(j+1) \ge (1/C)(1 - G(j)/G(j+1))$
for $j$ sufficiently large. It follows from this inequality and Condition \ref{smallderiv} that 
$\limsup_{j \to \infty} G(j+1)/G(j) \le \limsup_{j \to \infty} ( 1 - C G'(j+1)/G(j+1))^{-1} = 1$.
\end{proof}

\begin{lemma}\label{GhatG0} 
Suppose $g_2, \ldots, g_d$ satisfy Conditions \ref{nondecrcond}--\ref{smallderiv} and that  $\sup_{t \ge 0} \E_{\{g_i\}}[N_t] < \infty$. Then, 
\begin{equation}\label{eq:intGbound}
	 \sup_{t\ge0} E_0\Big[ \int_0^{(\zeta_{t/d} - z)_+} G(u)\,du \Big] < \infty. 
\end{equation}
\end{lemma}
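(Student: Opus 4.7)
The plan is to bound $E_0[\int_0^{(\zeta_{t/d} - z)_+} G(u)\,du]$ directly by a constant multiple of $\E_{\{g_i\}}[N_t]$, using the explicit formula \eqref{eq:generalNexpression} for the expectation. Since $\sup_t \E_{\{g_i\}}[N_t] < \infty$ by hypothesis, this will suffice.

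First I would discretize the integral. Setting $W = (\zeta_{t/d} - z)_+$ and using the monotonicity of $G$ (see Remark \ref{G-rmk}), for each $j \ge 0$,
\[
	\int_j^{(j+1) \wedge W} G(u)\,du \le G(j+1)\, 1(W > j),
\]
and summing over unit intervals gives $\int_0^W G(u)\,du \le \sum_{j \ge 0} G(j+1) 1(W > j)$. By Remark \ref{G-rmk}, $G$ itself satisfies Conditions \ref{nondecrcond}--\ref{smallderiv}, so Lemma \ref{Gproperty} yields a constant $C$ with $G(j+1) \le CG(j)$ for every integer $j \ge 0$. Taking expectation then gives
\[
	E_0\Big[\int_0^W G(u)\,du\Big] \le C \sum_{j \ge 0} G(j)\, P_0(\zeta_{t/d} > z + j).
\]

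Next, I would compare $G(j) = \prod_{i=2}^d (2g_i(j)+1)$ to the factor $\prod_{i=2}^d (2\lfloor g_i(j)\rfloor + 1)$ appearing in \eqref{eq:generalNexpression}. For each $i$,
\[
	\frac{2g_i(j)+1}{2\lfloor g_i(j)\rfloor + 1} = 1 + \frac{2(g_i(j) - \lfloor g_i(j) \rfloor)}{2\lfloor g_i(j) \rfloor + 1} \le 3,
\]
so $G(j) \le 3^{d-1} \prod_{i=2}^d (2\lfloor g_i(j)\rfloor + 1)$. Substituting into the sum above and recognizing \eqref{eq:generalNexpression} yields
\[
	E_0\Big[\int_0^W G(u)\,du\Big] \le C \cdot 3^{d-1}\, \E_{\{g_i\}}[N_t],
\]
so that $\sup_t E_0[\int_0^W G(u)\,du] < \infty$ as required.

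No significant obstacle is anticipated here; the argument is elementary once Lemma \ref{Gproperty} has been proved. The only point needing care is verifying the applicability of Lemma \ref{Gproperty} to $G$ rather than to the individual $\{g_i\}$, which is immediate from Remark \ref{G-rmk}.
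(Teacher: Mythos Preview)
Your proposal is correct and follows essentially the same approach as the paper: both discretize the integral into unit intervals, invoke Lemma \ref{Gproperty} to replace $G(j+1)$ by $CG(j)$, use the elementary bound $G(j)\le 3^{d-1}\prod_{i=2}^d(2\lfloor g_i(j)\rfloor+1)$, and then identify the resulting sum with $\E_{\{g_i\}}[N_t]$ via \eqref{eq:generalNexpression}. The only cosmetic difference is that the paper first applies Fubini to write $E_0[\int_0^{(\zeta_{t/d}-z)_+}G(u)\,du]=\int_0^\infty G(u)P_0(\zeta_{t/d}>z+u)\,du$ and then discretizes, whereas you discretize pathwise before taking expectation.
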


\begin{proof} 
Note that
\begin{equation}\label{eq:Gfloorbound}
	G(j) = \prod_{i=2}^d (2g_i(j) + 1) \le \prod_{i=2}^d (2\lfloor g_i(j) \rfloor + 3) \le 3^{d-1} \prod_{i=2}^d (2\lfloor g_i(j) \rfloor + 1). 
\end{equation}
From Lemma \ref{Gproperty} and since $G$ is nondecreasing, $\sup_{u \in [j, j+1]} G(u) = G(j+1) \le CG(j)$ for all $j \in \{0, 1, 2, \ldots\}$.
Using this, followed by \eqref{eq:Gfloorbound} and \eqref{eq:generalNexpression}, we have 
\begin{align*}
	&E_0\Big[ \int_0^{(\zeta_{t/d} - z)_+} G(u)\,du \Big] = \int_0^\infty G(u)P_0(\zeta_{t/d} > z + u)\,du 
	= \sum_{j \ge 0} \int_j^{j+1} G(u) P_0(\zeta_{t/d} > z + u)\,du \\
	&\le  C \sum_{j \ge 0} G(j) P_0(\zeta_{t/d} > z + j) 
	\le C'  \sum_{j \ge 0} \Big(\prod_{i=2}^d (2\lfloor g_i(j) \rfloor + 1)\Big) P_0(\zeta_{t/d} > z + j) = C'\E_{\{g_i\}}[N_t]. 
\end{align*} 
As $\sup_t \mathbb{E}_{\{g_i\}}[N_t]<\infty$, this completes the proof. 
\end{proof}

Lastly, we prove Corollary \ref{Ccor}. 

\begin{proof}[Proof of Corollary \ref{Ccor}] Condition \ref{smallderiv} implies that $G'(u) \le CG(u)$ for some $C$ and all $u$. Thus, 
	$E_0[G'(\zeta_{t/d} - z)1(\zeta_{t/d} \ge z)] \le C E_0[G(\zeta_{t/d} - z)1(\zeta_{t/d} \ge z)]$.
Moreover, \eqref{eq:hatG'<G} and \eqref{eq:intGbound} imply 
	$\sup_{t \ge 0} E_0[\widehat G(\zeta_{t/d} - z)1(\zeta_{t/d} \ge z)] < \infty$. 
Inserting these last two bounds into the estimate of Proposition \ref{mainbound} gives 
\[
	\sC_t(\{g_i\},z) \le C \big( \gamma_d(t) ( E_0[ G(\zeta_{t/d} - z) 1(\zeta_{t/d} \ge z)] )^2 + \gamma_{d+1}(t)  E_0[ G(\zeta_{t/d} - z) 1(\zeta_{t/d} \ge z)] \big). 
\]

Now, if $H = \int_0^u G(v)\,dv$ and $\tilde H(u) = G(u)$, then from Condition \ref{smallderiv}, $\tilde H'(u)/H'(u) \to 0$ as in \eqref{eq:G'/G}. Since $\sup_t\E_{\{g_i\}}[N_t] < \infty$ implies $t^{-1/2}z \to \infty$, from \eqref{eq:intGbound} and Lemma \ref{smallerfunction} we conclude 
$\lim_{t\to\infty} E_0[G(\zeta_{t/d} - z)1(\zeta_{t/d} \ge z)] = \lim_{t\to\infty} E_0[\tilde H(\zeta_{t/d} - z)1(\zeta_{t/d} \ge z)] = 0$,
completing the proof. 
\end{proof}

\section{Appendix}\label{rwlemmas}

Here we give several results regarding a continuous time simple random walk $\{\zeta_t\}$ in one dimension, followed by an asymptotic lemma for the standard Gaussian distribution.  For the reader's convenience, these are collected here, although forms of the results might be found in the literature.

\begin{lemma}\label{sum2exp} For continuous $h: \R_+ \to \R_+$, let $H(u) = \int_0^u h(x)\,dx$. 
If  $h$ is nondecreasing, then for any $z \in \R$, 
\[
	\Big|E_0[H(\zeta_t - z)1(\zeta_t > z)] - \sum_{j \ge 0} h(j) P_{-j}(\zeta_t  > z)\Big| \le E_0[h(\zeta_t - z)1(\zeta_t > z)]. 
\]
\end{lemma}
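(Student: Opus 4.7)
The plan is to prove this by a pointwise deterministic comparison between $H(v)$ and the step function $\sum_{j \ge 0} h(j)1(v > j)$ at $v = \zeta_t - z$, then take expectation. The key observation is that the nondecreasing assumption on $h$ lets us sandwich the integral by Riemann-like sums.

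First, I would rewrite the sum on the left-hand side as an expectation. Since $P_{-j}(\zeta_t > z) = P_0(\zeta_t > z + j) = E_0[1(\zeta_t - z > j)]$, Tonelli's theorem (applicable because $h \ge 0$) gives
\[
	\sum_{j \ge 0} h(j) P_{-j}(\zeta_t > z) = E_0\Big[ \sum_{j \ge 0} h(j) 1(\zeta_t - z > j) \Big].
\]
So it suffices to show the pointwise bound
\[
	\Big| H(v)1(v > 0) - \sum_{j \ge 0} h(j)1(v > j) \Big| \le h(v) 1(v > 0), \qquad v \in \R,
\]
and then take $E_0$ at $v = \zeta_t - z$ and apply the triangle inequality. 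When $v \le 0$, both sides are zero (using the convention $h(v)1(v>0) = 0$), so the inequality is trivial.

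Next I would handle $v > 0$. Writing $M = \lceil v \rceil$ so that $M-1 < v \le M$, the step-function value equals $\sum_{j=0}^{M-1} h(j)$, while
\[
	H(v) = \sum_{j=0}^{M-2} \int_j^{j+1} h(x)\,dx + \int_{M-1}^v h(x)\,dx.
\]
Since $h$ is nondecreasing, on $[j,j+1]$ one has $h(j) \le h(x) \le h(j+1)$. The lower bound yields $H(v) \ge \sum_{j=0}^{M-2} h(j)$, so
\[
	\sum_{j=0}^{M-1} h(j) - H(v) \le h(M-1) \le h(v).
\]
The upper bound yields $H(v) \le \sum_{j=0}^{M-2} h(j+1) + (v-M+1)h(v) = \sum_{j=0}^{M-1} h(j) - h(0) + (v-M+1)h(v)$, and since $0 \le v-M+1 \le 1$ and $h \ge 0$, this gives
\[
	H(v) - \sum_{j=0}^{M-1} h(j) \le h(v).
\]
Combining the two bounds establishes the pointwise estimate.

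There is no serious obstacle here; the entire argument is elementary, and the main (minor) care is in the bookkeeping at the partial final interval $[M-1, v]$ and in checking the edge cases $M=1$ (empty telescoping sum) and $v \le 0$. After the pointwise inequality is in hand, taking $E_0$ and pushing the absolute value inside the expectation by the triangle inequality delivers the stated bound.
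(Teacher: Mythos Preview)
Your proof is correct and is actually cleaner than the paper's argument. Both proofs exploit that $h$ is nondecreasing to compare an integral of $h$ with a Riemann-type sum, but they do so at different levels. The paper first writes $E_0[H(\zeta_t-z)1(\zeta_t>z)]=\int_0^\infty h(u)P_0(\zeta_t>z+u)\,du$ and then compares this integral over $u$ with the sum $\sum_{j\ge0}h(j)P_0(\zeta_t>z+j)$ by estimating $P_0(\zeta_t>z+j)$ against $P_0(\zeta_t>z+u)$ for $u\in(j,j+1)$; because $\zeta_t$ is integer-valued, this forces a case split on whether $z\in\Z$ and some bookkeeping with $\lceil z\rceil$. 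You instead establish the deterministic pointwise inequality $|H(v)1(v>0)-\sum_{j\ge0}h(j)1(v>j)|\le h(v)1(v>0)$ and then take expectation, which sidesteps the integrality of $\zeta_t$ and the case analysis entirely. What your approach buys is brevity and transparency; what the paper's approach buys is nothing extra here, though its viewpoint (manipulating tail probabilities directly) is closer in spirit to how the bound is later applied.
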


\begin{proof} 
Since $h$ is continuous, $H$ is differentiable and 
$E_0[H(\zeta_t -z)1(\zeta_t > z)] =  \int_0^\infty h(u) P_0(\zeta_t > z + u)\,du$.

Note also the following. Let $j \in \Z$ and $u \in (j, j+1)$. If $z \not\in \Z$, then 
\begin{align*}
	P_0(\zeta_{t} > z + j) 
	& = P_0(\zeta_{t} = \lceil z \rceil + j) + P_0(\zeta_{t} \ge \lceil z \rceil + j + 1) \\
	&\le P_0(\zeta_{t} = \lceil z \rceil + j) + P_0(\zeta_{t} > z + u). 
\end{align*}
Then, the nondecreasing property of $h$ implies 
\begin{align*}
	\sum_{j \ge 0} h(j) P_{-j}(\zeta_t  > z) &= \sum_{j \ge 0} \int_j^{j+1} h(j) P_0(\zeta_{t} > z + j)\,du \\
	&\le \sum_{j \ge 0} \int_j^{j+1} h(u) P_0(\zeta_{t} > z + u)\,du + \sum_{j \ge 0} h(j) P(\zeta_{t} = \lceil z \rceil + j) \\
	&= \int_0^\infty h(u) P_0(\zeta_{t} > z + u)\,du + E_0[h(\zeta_{t} - \lceil z \rceil) 1(\zeta_{t} \ge \lceil z \rceil)] \\
	&\le E_0[H(\zeta_{t} - z)1(\zeta_t > z)] + E_0[h(\zeta_t - z)1(\zeta_t > z)]. 
\end{align*}
Otherwise if $z \in \Z$, 
$P_0(\zeta_{t/d} > z + j) = P_0(\zeta_{t/d} \ge z + j + 1) \le P_0(\zeta_{t/d} > z + u)$,
and we obtain 
$\sum_{j \ge 0} h(j) P_{-j}(\zeta_t  > z)  \le E_0[H(\zeta_{t} - z)1(\zeta_t > z)]$.

For a lower bound, for any $z$ we have 
\begin{equation}
\label{eq:lbpart1}
\begin{aligned}
	\sum_{j \ge 0} h(j) P_{-j}(\zeta_t  > z) &\geq \sum_{j \ge 1} \int_{j}^{j+1} h(j) P_0(\zeta_t > z + j)\,du 
	\ge\sum_{j \ge 1} \int_j^{j+1} h(u-1)P_0(\zeta_t > z + u)\,du  \\
	&= \int_1^\infty h(u-1)P_0(\zeta_t > z + u)\,du 
	= \int_0^\infty h(u)P_0(\zeta_t > z + u + 1)\,du. 
\end{aligned}
\end{equation}
Moreover, 
\begin{align*}
	&E_0[H(\zeta_t - z)1(\zeta_t > z)] - \int_0^\infty h(u)P_0(\zeta_t > z + u + 1)\,du \\
	&= \int_0^\infty h(u) (P_0(\zeta_t > z + u) - P_0(\zeta_t > z + u + 1))\,du \\
	&= \int_0^\infty h(u) P_0(u < \zeta_t - z \le u + 1)\,du
	= E_0\Big[ \int_0^\infty h(u) 1(\zeta_t - z - 1 \le u < \zeta_t - z)\,du \Big] \\
	&= E_0\Big[ \int_{(\zeta_t - z - 1)_+}^{\zeta_t - z} h(u)\,du\, 1(\zeta_t > z) \Big] \\
	&\le E_0[ h(\zeta_t - z)(\zeta_t - z - (\zeta_t - z - 1)_+)1(\zeta_t > z)] \le E_0[h(\zeta_t - z)1(\zeta_t > z)]. 
\end{align*}
The previous display, with \eqref{eq:lbpart1}, gives 
$\sum_{j \ge 0} h(j) P_{-j}(\zeta_t  > z) \ge E_0[H(\zeta_t - z)1(\zeta_t > z)] - E_0[h(\zeta_t - z)1(\zeta_t > z)]$,
completing the proof. 
\end{proof}

\begin{lemma}\label{smallerfunction} Let $H, \tilde H : \R_+ \to \R_+$ be nondecreasing and continuously differentiable with $h = H'$ and $\tilde h = \tilde H'$ satisfying $\lim_{u\to\infty} \tilde h(u)/h(u) = 0$. 
If $t^{-1/2}z \to \infty$ as $t \to \infty$ and 
$\sup_{t \ge 0} E_0[H(\zeta_t - z)1(\zeta_t > z)] < \infty$,
then $\lim_{t\to\infty} E_0[\tilde H(\zeta_t - z)1(\zeta_t > z)] = 0$. 
\end{lemma}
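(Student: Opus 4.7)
The plan is to reduce to a simple threshold argument exploiting $\tilde h(u)/h(u) \to 0$. Fix $\epsilon > 0$ and choose $M = M(\epsilon)$ so that $\tilde h(u) \le \epsilon h(u)$ for all $u \ge M$. Integrating this pointwise inequality from $M$ to any $v \ge M$, and using $H(M) \ge 0$, yields the linear comparison
\[
\tilde H(v) - \tilde H(M) \;\le\; \epsilon\bigl(H(v) - H(M)\bigr) \;\le\; \epsilon H(v), \qquad v \ge M,
\]
which is the only place the hypothesis on the ratio $\tilde h/h$ is used.

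Next I would split the expectation according to whether $\zeta_t - z$ lies in $[0,M]$ or exceeds $M$. For the first piece, since $\tilde H$ is nondecreasing,
\[
E_0\bigl[\tilde H(\zeta_t - z)\mathbf{1}(z < \zeta_t \le z+M)\bigr] \;\le\; \tilde H(M)\, P_0(\zeta_t > z),
\]
and this tends to $0$ because the assumption $t^{-1/2}z \to \infty$ implies $P_0(\zeta_t > z) \to 0$ (e.g., Chebyshev gives $P_0(\zeta_t > z) \le t/z^2 \to 0$). For the second piece, the comparison above together with nonnegativity of $H$ gives
\[
E_0\bigl[\tilde H(\zeta_t - z)\mathbf{1}(\zeta_t > z + M)\bigr] \;\le\; \tilde H(M)\, P_0(\zeta_t > z) + \epsilon\, E_0\bigl[H(\zeta_t - z)\mathbf{1}(\zeta_t > z)\bigr].
\]
Using the hypothesis $C := \sup_{t \ge 0} E_0[H(\zeta_t - z)\mathbf{1}(\zeta_t > z)] < \infty$, the second term is at most $\epsilon C$.

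Summing the two contributions and letting $t \to \infty$ produces
\[
\limsup_{t \to \infty} E_0\bigl[\tilde H(\zeta_t - z)\mathbf{1}(\zeta_t > z)\bigr] \;\le\; \epsilon C,
\]
and since $\epsilon > 0$ was arbitrary, the limit is zero. There is no real obstacle here: the argument is a one-variable Toeplitz-style tail comparison, with $\tilde h/h \to 0$ providing the needed smallness on the tail, the uniform bound on $E_0[H(\zeta_t - z)\mathbf{1}(\zeta_t>z)]$ absorbing it, and $t^{-1/2}z \to \infty$ killing the bounded contribution from the region $\zeta_t - z \in [0, M]$.
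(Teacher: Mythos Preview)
Your proof is correct and follows essentially the same approach as the paper's: fix $\epsilon>0$, choose a threshold beyond which $\tilde h\le \epsilon h$, split into a bounded piece controlled by $P_0(\zeta_t>z)\to 0$ and a tail piece controlled by $\epsilon\sup_t E_0[H(\zeta_t-z)1(\zeta_t>z)]$, then let $\epsilon\downarrow 0$. The only cosmetic difference is that the paper works through the layer-cake representation $E_0[\tilde H(\zeta_t-z)1(\zeta_t>z)]=\tilde H(0)P_0(\zeta_t>z)+\int_0^\infty \tilde h(u)P_0(\zeta_t>z+u)\,du$ and splits the integral, whereas you integrate the derivative inequality once to get the pointwise bound $\tilde H(v)\le \tilde H(M)+\epsilon H(v)$ and split the expectation directly; your use of Chebyshev in place of the CLT for $P_0(\zeta_t>z)\to 0$ is also fine.
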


\begin{proof} 
For any $\eps > 0$, there is $u_\eps \ge 0$ so $\tilde h(u) \le \eps h(u)$ for $u \ge u_\eps$. Then, 
\begin{align*}
	E_0[\tilde H(\zeta_t - z)1(\zeta_t > z)] &= \tilde H(0) P_0(\zeta_t > z) + \int_0^\infty \tilde h(u)P_0(\zeta_t > z + u)\,du \\
	&\le \Big( \tilde H(0) + u_\eps\sup_{u \le u_\eps}\tilde h(u) \Big) P_0(\zeta_t > z) + \eps \int_{u_\eps}^\infty h(u)P_0(\zeta_t > z + u)\,du \\
	&\le \Big( \tilde H(0) + u_\eps\sup_{u \le u_\eps}\tilde h(u)  
	\Big) P_0(\zeta_t > z) 
	+ \eps E_0[H(\zeta_t - z)1(\zeta_t > z)]. 
\end{align*}
Moreover, $t^{-1/2}z \to \infty$ implies $P_0(\zeta_t > z) \to 0$, by the central limit theorem. 

Thus, 
$\limsup_{t\to\infty} E_0[\tilde H(\zeta_t - z)1(\zeta_t > z)] \le \eps \sup_{t \ge 0} E_0\left[H(\zeta_t - z)1(\zeta_t > z)\right]$,
from which the result follows by letting $\eps \to 0$.  
\end{proof}

\begin{lemma}\label{jsum} Let $h : \R_+ \to \R_+$ be a function satisfying $E_j[h(-\zeta_t)1(\zeta_t \le 0)] < \infty$ for all $j \in \Z$ and $t \ge 0$. Then for any $0 < s < t$ and $z \in \R$, 
\[
	\sum_{j \in \Z} E_j[h(-\zeta_s)1(\zeta_s \le 0)] P_j(\zeta_{t-s} \ge z) = \sum_{k \le 0} h(-k) P_k(\zeta_t \ge z). 
\]
If in addition $h$ is nondecreasing, then 
\[
	 \sup_{0 < s < t} \sup_{j \in \Z} E_j[h(-\zeta_s)1(\zeta_s \le 0)] P_j(\zeta_{t-s} \ge z) \le E_0[h(\zeta_t - z)1(\zeta_t \ge z)]. 
\]
\end{lemma}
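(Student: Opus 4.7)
\textbf{Proof proposal for Lemma \ref{jsum}.}
For the identity, I would expand the left hand side via the definition of the expectation and swap sums (justified by nonnegativity / Tonelli):
\[
\sum_{j\in\Z} E_j[h(-\zeta_s)1(\zeta_s\le 0)] P_j(\zeta_{t-s}\ge z)
= \sum_{k\le 0} h(-k) \sum_{j\in\Z} P_j(\zeta_s=k) P_j(\zeta_{t-s}\ge z).
\]
The symmetric simple random walk is reversible with respect to counting measure, so $P_j(\zeta_s=k)=P_k(\zeta_s=j)$, and an application of Chapman--Kolmogorov collapses the inner sum to $P_k(\zeta_t\ge z)$. This gives exactly the right hand side. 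The finiteness assumption $E_j[h(-\zeta_s)1(\zeta_s\le 0)]<\infty$ is only needed to rule out pathological cancellations; because everything is nonnegative, it is automatic.

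For the bound, the key idea is to couple the two factors into a single joint expectation using independent copies of the walk. Let $X,Y$ be independent with $X\stackrel{d}{=}\zeta_s$ and $Y\stackrel{d}{=}\zeta_{t-s}$ under $P_0$; then $X+Y\stackrel{d}{=}\zeta_t$ under $P_0$. By translation invariance,
\[
P_j(\zeta_{t-s}\ge z) = P(Y\ge z-j),
\]
and using the symmetry $X\stackrel{d}{=}-X$ of simple random walk,
\[
E_j[h(-\zeta_s)1(\zeta_s\le 0)] = E[h(-X-j)1(X\le -j)] = E[h(X-j)1(X\ge j)].
\]
By independence, the product of the two factors equals
\[
E\bigl[h(X-j)1(X\ge j)1(Y\ge z-j)\bigr].
\]

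The final step is to eliminate the $j$-dependence from the right hand side. On the event $\{X\ge j,Y\ge z-j\}$ we have $X+Y\ge z$, and because $h:\R_+\to\R_+$ is nondecreasing and $X-j\le (X-j)+(Y-(z-j))=X+Y-z$ with both values nonnegative, the monotonicity yields $h(X-j)\le h(X+Y-z)$. Combined with $1(X\ge j)1(Y\ge z-j)\le 1(X+Y\ge z)$, this gives
\[
E\bigl[h(X-j)1(X\ge j)1(Y\ge z-j)\bigr] \le E\bigl[h(X+Y-z)1(X+Y\ge z)\bigr] = E_0[h(\zeta_t-z)1(\zeta_t\ge z)],
\]
uniformly in $j\in\Z$ and $0<s<t$. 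The only real idea here is the symmetrization $X\mapsto -X$ that realigns the indicator $1(\zeta_s\le 0)$ with the half-line on which $h$ is evaluated; once that is done, the monotonicity bound is routine. I do not anticipate any genuine obstacle.
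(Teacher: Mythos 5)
Your proposal is correct and follows essentially the same route as the paper: translation invariance plus the symmetry $P_j(\zeta_s=k)=P_k(\zeta_s=j)$ and Chapman--Kolmogorov for the identity, and for the bound the decomposition of $\zeta_t$ into independent increments (your $X+Y$) combined with monotonicity of $h$ on the event $\{X\ge j,\,Y\ge z-j\}\subset\{X+Y\ge z\}$. The only difference is cosmetic: the paper routes through $\lceil z\rceil$ and reindexes by $k=\lceil z\rceil-j$ before bounding, whereas you bound each term directly in $j$, which is equally valid.
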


\begin{proof} These are computations using the properties $P_0(- \zeta_t \in \cdot) = P_0(\zeta_t \in \cdot)$ and $P_j(\zeta_t \in \cdot) = P_0(\zeta_t + j \in \cdot)$. 
First, 
\begin{align*}
	&\sum_{j \in \Z} E_j[h(-\zeta_s)1(\zeta_s \le 0)] P_j(\zeta_{t-s} \ge z) = \sum_{j \in \Z} E_0[h(-\zeta_s - j)1(\zeta_s + j \le 0)] P_j(\zeta_{t-s} \ge z) \\
	&= \sum_{j\in\Z} \sum_{k \le 0} h(-k) P_0(\zeta_s + j = k) P_j(\zeta_{t-s} \ge z) 
	= \sum_{k \le 0} h(-k) \sum_{j \in \Z} P_0(\zeta_s + k = j) P_j(\zeta_{t-s} \ge z) \\
	&= \sum_{k \le 0} h(-k) \sum_{j \in \Z} P_k(\zeta_s = j) P_j(\zeta_{t-s} \ge z) 
	= \sum_{k \le 0} h(-k) P_k(\zeta_t \ge z). 
\end{align*}
The last line above follows from the Chapman-Kolmogorov equation for the process $\zeta_t$. 

Next, suppose that $h$ is nondecreasing. We have 
\begin{align*}
&E_j[h(-\zeta_{s})1(\zeta_{s} \le 0)] P_j(\zeta_{t-s} \ge z) 
	= E_j[h(-\zeta_{s})1(\zeta_{s} \le 0)] P_j(\zeta_{t-s} \ge \lceil z \rceil) \\
	&= E_0[ h(- \zeta_{s} - j)1(\zeta_{s} \le -j)] P_0(\zeta_{t-s} \ge \lceil z \rceil - j) 
	= E_0[ h(\zeta_{s} - j)1(\zeta_{s} \ge j)] P_0(\zeta_{t-s} \ge \lceil z \rceil - j) \\
	&= E_{\lceil z \rceil - j}[h(\zeta_{s} - \lceil z \rceil)1(\zeta_{s} \ge \lceil z \rceil)]P_0(\zeta_{t-s} \ge \lceil z \rceil - j) . 
\end{align*}
Hence, 
$$\sup_{j \in \Z} E_j[h(-\zeta_{s})1(\zeta_{s} \le 0)]P_j(\zeta_{t-s} \ge z) 
	= \sup_{k \in \Z} E_{k}[h( \zeta_s - \lceil z \rceil )1(\zeta_{s} \ge \lceil z \rceil)]P_0(\zeta_{t-s} \ge k).$$	
Now, $\zeta_{t-s} \overset{d}{=} \zeta_t - \zeta_s$ and $\zeta_t - \zeta_s$ is independent of $\zeta_s$. This implies 
\begin{align*}
	&E_{k}[h( \zeta_s -  \lceil z \rceil )1(\zeta_{s} \ge \lceil z \rceil)]P_0(\zeta_{t-s} \ge k) = E_{0}[h( \zeta_s + k -  \lceil z \rceil )1(\zeta_{s} + k\ge \lceil z \rceil )] P_0(\zeta_{t-s} \ge k) \\
	&= E_{0}[h( \zeta_s + k -  \lceil z \rceil  )1( \zeta_t - \zeta_s \ge k, \zeta_{s} + k \ge \lceil z \rceil )] 
	= E_0[h(\zeta_s + k - \lceil z \rceil) 1( \lceil z \rceil \le \zeta_s + k \le \zeta_t)] \\
	&\le E_0[h(\zeta_t - \lceil z \rceil)1(\zeta_t \ge \lceil z \rceil)] 
	\le E_0[h(\zeta_t - z)1(\zeta_t \ge z)], 
\end{align*}
where monotonicity of $h$ was used in the last two lines. 
The result follows. 
\end{proof}

For the next lemma, recall that $x(t) \sim y(t)$ denotes $\lim_{t\to\infty} x(t)/y(t) = 1$, and that $X$ is standard Gaussian with density function $\varphi$. 
\begin{lemma}\label{rw2normal}  
Suppose $H : \R_+ \to \R_+$ is continuously differentiable with $h = H'$ satisfying 
\begin{equation}\label{eq:polygrowthass}
	h(u) \le C(1 + u^{\beta-1}), \qquad   u \ge 0, 
\end{equation}
for some $C > 0$ and $\beta \ge 1$. If 
$z = o(t^{2/3})$ as $t \to \infty$, 
then 
\[
	E_0\left[H(\zeta_t - z)1(\zeta_t > z)\right] \sim E[H(\sqrt{t}X - z)1(\sqrt{t}X > z)], \qquad t \to \infty. 
\]
\end{lemma}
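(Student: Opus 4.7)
The plan is to write both expectations as weighted integrals of tail probabilities and compare them termwise via moderate deviations. Since $h$ is continuous and nonnegative with $H(v) = H(0) + \int_0^v h(u)\,du$ for $v \ge 0$, Tonelli gives
\[
	E[H(Y-z)1(Y > z)] \;=\; H(0)\,P(Y > z) \;+\; \int_0^\infty h(u)\,P(Y > z+u)\,du,
\]
for both $Y = \zeta_t$ (under $P_0$) and $Y = \sqrt{t}X$. The desired asymptotic equivalence reduces to showing that, after weighting by $h$, the tails $P_0(\zeta_t > \cdot)$ and $P(\sqrt{t}X > \cdot)$ agree asymptotically in the relevant range $y = z+u$, $u \ge 0$.

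For the bulk contribution I would choose a scaling $a_t\sqrt{t}$ with $a_t \to \infty$ slowly enough that $a_t = o(t^{1/6})$ yet $z/(a_t\sqrt{t}) \to 0$; this is possible because $z/\sqrt{t} = o(t^{1/6})$. On the range $y \in [z, a_t\sqrt{t}]$, which lies in the moderate deviations regime $y = o(t^{2/3})$ for $\zeta_t$, I would invoke the uniform Cram\'er--Petrov moderate deviations estimate
\[
	\sup_{z \le y \le a_t\sqrt{t}}\left|\frac{P_0(\zeta_t > y)}{P(\sqrt{t}X > y)} - 1\right| \to 0 \qquad (t \to \infty),
\]
which follows from the classical Cram\'er--Petrov expansion for sums of i.i.d.\ $\pm 1$ variables together with the Poisson subordination $\zeta_t = X_1 + \cdots + X_{N_t}$. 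Combined with Tonelli, this yields the asymptotic equality of the two bulk integrals over $u \in [0, a_t\sqrt{t}-z]$.

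For the far tail $u > a_t\sqrt{t}-z$, I would apply a Chernoff-type bound such as $P_0(\zeta_t > y) \le e^{-y^2/(2t)}$ for $y \le t$ (with a sharper exponential bound for $y > t$ via the Poisson jump structure), together with the standard Gaussian tail bound and the polynomial growth $h(u) \le C(1 + u^{\beta-1})$, to show both far-tail integrals are of order $O(t^{C} e^{-a_t^2/4})$, i.e., super-polynomially small. On the other hand, Lemma \ref{lem:normal} gives $E[H(\sqrt{t}X - z)1(\sqrt{t}X > z)]$ at least on the order of $(\sqrt{t})^\beta\varphi(z/\sqrt{t})/(z/\sqrt{t})^{\beta+1}$, and since $z/\sqrt{t} = o(t^{1/6})$ we have $\varphi(z/\sqrt{t}) \ge e^{-o(t^{1/3})}$, so the Gaussian expectation decays at worst like $e^{-(z/\sqrt{t})^2/2}$ up to polynomial factors. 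The choice $a_t \gg z/\sqrt{t}$ ensures $a_t^2/4 - (z/\sqrt{t})^2/2 \to \infty$, making the far-tail contributions negligible relative to the Gaussian expectation on both sides. Passing $T_L, T_R = o(L(t))$ through the chain $R(t) = B_R + T_R = (1+o(1))B_L + o(L(t)) = (1+o(1))L(t)$ then yields the result.

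The main obstacle will be establishing the uniform Cram\'er--Petrov comparison in the precise form stated above: while the one-dimensional Cram\'er--Petrov theorem for i.i.d.\ Bernoulli sums supplies the key ingredient, one must carefully pass from discrete Bernoulli sums to the continuous time walk $\zeta_t$ and verify that the ratio of tails converges to $1$ uniformly over the prescribed moderate deviations window. All remaining steps are essentially bookkeeping, comparing explicit exponential tail bounds against the explicit Gaussian asymptotics of Lemma \ref{lem:normal}.
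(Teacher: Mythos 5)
Your outline is essentially the paper's own argument: the same Tonelli decomposition $E[H(Y-z)1(Y>z)] = H(0)P(Y>z)+\int_0^\infty h(u)P(Y>z+u)\,du$, a uniform random-walk-versus-Gaussian tail comparison on a moderate-deviation window (the paper invokes Feller's explicit estimate $\bigl|P_0(\zeta_t>u\sqrt{t})/P(X>u)-1\bigr|\le Cu^3/\sqrt{t}$, valid for $u=o(t^{1/6})$, on the window $[z/\sqrt{t},\,r\vee\log t]$ with $r=o(t^{1/6})$, $r\gg z/\sqrt{t}$), and an exponential truncation of the far tail (the paper uses Cauchy--Schwarz plus moment bounds where you propose Chernoff; either works). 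The ``main obstacle'' you flag -- transferring the i.i.d.\ Cram\'er--Petrov estimate to the continuous-time walk -- is dealt with in the paper simply by citing Feller for $\zeta_t$, so it is not where the real work lies. Two points in your tail step do need repair. First, $t^{C}e^{-a_t^2/4}$ is super-polynomially small only if $a_t$ is at least of order $\log t$; your conditions ($a_t\to\infty$, $a_t=o(t^{1/6})$, $a_t\gg z/\sqrt{t}$) do not guarantee this, which is exactly why the paper truncates at $r\vee\log t$. Second, your claim that Lemma \ref{lem:normal} bounds $E[H(\sqrt{t}X-z)1(\sqrt{t}X>z)]$ from \emph{below} by something of order $t^{\beta/2}\varphi(z/\sqrt{t})/(z/\sqrt{t})^{\beta+1}$ is unjustified: the hypothesis only gives an upper bound on $h$, so $H$ may be far smaller (even bounded), and no such lower bound is available. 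The paper does not attempt this relative comparison; it settles for showing the truncation errors are additively of order $t^{\beta/2}(\log t)^{\beta-1}e^{-(\log t)^2/4}$, which is what its applications (where the main term decays at most polynomially) require. If you replace your relative-negligibility claim by that additive estimate -- or adapt the cutoff, e.g.\ $a_t^2=(z/\sqrt{t})^2+(\log t)^2$ -- your argument matches the paper's.
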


\begin{proof} We make use of the following large deviation result, which can be found on page 552 of \cite{FellBook71}: There is $C > 0$ so that, if $u = o(t^{1/6})$ and $t$ is sufficiently large, 
\begin{equation}\label{eq:largedev}
	\Big| \frac{P_0(\zeta_t > u\sqrt{t})}{P(X > u)} - 1 \Big| \le \frac{Cu^3}{\sqrt{t}}. 
\end{equation}
In particular, 
\begin{equation}\label{eq:tailequiv}
	P_0(\zeta_t > u\sqrt{t}) \sim P(X > u), \qquad t\to\infty, \quad \text{when} \quad u = o(t^{1/6}). 
\end{equation}

Then since $P_0(\zeta_t > z) - P(\sqrt{t}X > z) \to 0$,  
\begin{align*}
	&E_0[H(\zeta_t - z)1(\zeta_t > z)] = H(0) P_0(\zeta_t > z) + \int_0^\infty h(u)P_0(\zeta_t > z + u)\,du \\
	&= E[H(\sqrt{t}X - z)1(\sqrt{t}X > z)] + H(0) (P_0(\zeta_t > z) - P(\sqrt{t}X > z) ) \\
	&\quad +  \int_0^\infty h(u)P_0(\zeta_t > z + u)\,du - \int_0^\infty h(u)P_0(\sqrt{t}X > z + u)\,du \\
	&\sim E[H(\sqrt{t}X - z)1(\sqrt{t}X > z)]  \\
	&\quad +  \int_0^\infty h(u)P_0(\zeta_t > z + u)\,du - \int_0^\infty h(u)P_0(\sqrt{t}X > z + u)\,du. 
\end{align*}
Hence we show that 
\begin{equation}\label{eq:reducedasym}
	\int_0^\infty h(u)P_0(\zeta_t > z + u)\,du \sim \int_0^\infty h(u)P_0(\sqrt{t}X > z + u)\,du, \qquad t \to \infty, 
\end{equation}
which will imply the result. 

For notational convenience, let $w = t^{-1/2} z$. Let $r \to \infty$ denote a sequence such that $r = o(t^{1/6})$ and $r/w \to \infty$ as $t \to \infty$ (for example, take $r = w|\log(t^{-1/6}w)|$ and note that by the assumption $z = o(t^{2/3})$, we have $w = o(t^{1/6})$). 
Then we may write 
\begin{equation}\label{eq:rlogt}
\begin{aligned}
	\int_0^\infty h(u)P_0(\zeta_t > z + u)\,du &= \sqrt{t} \int_w^\infty h(\sqrt{t} u - z)P_0(t^{-1/2} \zeta_t > u)\,du \\
	&= \sqrt{t} \int_w^{r \vee \log t}  h(\sqrt{t} u - z)P_0(t^{-1/2} \zeta_t > u)\,du + o(1), 
\end{aligned}
\end{equation}
which is justified as follows. 

Using \eqref{eq:polygrowthass} in the third line, the Cauchy-Schwarz inequality in the sixth line, and \eqref{eq:tailequiv} along with $E_0[\zeta_t^{2\beta}] \leq C t^{\beta}$ (e.g., Burkholder-Davis-Gundy inequality) in the last line, 
\begin{align}\nonumber
	&\sqrt{t} \int_{r \vee \log t}^\infty h(\sqrt{t}u - z)P_0(t^{-1/2} \zeta_t > u)\,du \\ \nonumber
	&\le \int_0^\infty h(u + \sqrt{t} \log t - z)P_0(\zeta_t > u + \sqrt{t}\log t)\,du \\ \nonumber
	&\le C \int_0^\infty \left( 1 + (u + \sqrt{t}\log t)^{\beta-1} \right) P_0(\zeta_t - \sqrt{t}\log t > u)\,du \\ \nonumber
	&\le C' \Big(\int_0^\infty u^{\beta-1}P_0(\zeta_t - \sqrt{t}\log t > u)\,du + t^{(\beta-1)/2}(\log t)^{\beta-1} \int_0^\infty P_0(\zeta_t - \sqrt{t}\log t > u)\,du \Big) \\ \nonumber
	&\le C' \left( E_0[\zeta_t^\beta 1(\zeta_t > \sqrt{t}\log t)] + t^{(\beta-1)/2}(\log t)^{\beta-1} E_0[\zeta_t 1(\zeta_t > \sqrt{t}\log t)] \right) \\ \nonumber
	&\le C' \left( (E_0[\zeta_t^{2\beta}])^{1/2} + t^{(\beta-1)/2}(\log t)^{\beta-1} (E_0[\zeta_t^2])^{1/2}  \right) P_0(\zeta_t > \sqrt{t}\log t)^{1/2} \\ \label{eq:afterlog}
	&\leq C'' t^{\beta/2} (\log t)^{\beta-1} P(X > \log t)^{1/2} \le C''' t^{\beta/2}(\log t)^{\beta-1} e^{-(1/4)(\log t)^2} \to 0, 
\end{align}
as $t \to \infty$. 

Continuing from \eqref{eq:rlogt}, we write 
\begin{equation}\label{eq:gaussiandecomp}
\begin{aligned} 
	&\int_w^{r \vee \log t}  h(\sqrt{t} u - z)P_0(t^{-1/2} \zeta_t > u)\,du \\
	 &= \int_w^{r \vee \log t}  h(\sqrt{t} u - z) P(X > u)\,du +  \int_w^{r \vee \log t}  h(\sqrt{t} u - z)\Big( \frac{P_0( \zeta_t > u\sqrt{t})}{P(X > u)} - 1 \Big) P(X > u)\,du. 
\end{aligned}
\end{equation}
Since $r \vee \log t = o(t^{1/6})$, \eqref{eq:largedev} holds uniformly in $u \in [w, r\vee\log t]$ and in $t$ sufficiently large. 
Applying this to the last term in \eqref{eq:gaussiandecomp}, we have 
\begin{align} \nonumber
	& \Big|\sqrt{t}\int_w^{r \vee \log t}  h(\sqrt{t} u - z)\Big( \frac{P_0( \zeta_t > u\sqrt{t})}{P(X > u)} - 1 \Big) P(X > u)\,du\Big| \\  \nonumber
	&\le C\int_w^{r \vee \log t}  u^3 h(\sqrt{t} u - z) P(X > u)\,du \\ \label{eq:w<logt}
	&\le C1(w \le \log t) \int_w^{\log t} u^3 h(\sqrt{t} u - z) P(X > u)\,du \\ \label{eq:logt<r}
	&\quad + C1(\log t \le r) \int_{\log t}^r u^3 h(\sqrt{t} u - z) P(X > u)\,du. 
\end{align}
 For \eqref{eq:w<logt}, we have 
\[
	1(w \le \log t) \int_w^{\log t} u^3 h(\sqrt{t} u - z) P(X > u)\,du \le (\log t)^3 \int_w^{r \vee \log t} h(\sqrt{t}u - z)P(X > u)\,du, 
\]
and, using \eqref{eq:polygrowthass} and Lemma \ref{lem:normal}, for \eqref{eq:logt<r} we have 
\begin{align*}
	&1(\log t \le r) \int_{\log t}^r u^3 h(\sqrt{t} u - z) P(X > u)\,du 
	\le 2C t^{(\beta-1)/2} \int_{\log t}^\infty u^{\beta+2} P(X > u)\,du \\
	& \le  C' t^{(\beta-1)/2} E[(X - \log t)_+^{\beta+3}] \sim  C'' t^{(\beta-1)/2}\frac{\varphi(\log t)}{(\log t)^{\beta+4}} = O\Big( \frac{t^{(\beta-1)/2}}{(\log t)^{\beta+4}} e^{-(\log t)^2/2} \Big). 
\end{align*}

Thus from \eqref{eq:gaussiandecomp}, 
\begin{align} \nonumber
	 &\sqrt{t} \int_w^{r \vee \log t}  h(\sqrt{t} u - z)P_0(t^{-1/2} \zeta_t > u)\,du \\ \nonumber
	 &= \sqrt{t} \Big( 1 + O\Big(\frac{(\log t)^3}{\sqrt{t}}\Big) \Big) \int_w^{r \vee \log t}  h(\sqrt{t} u - z) P(X > u)\,du + O\Big( \frac{t^{\beta/2}}{(\log t)^{\beta+4}} e^{-(\log t)^2/2} \Big)\\ \label{eq:togaussian}
	 &\sim \sqrt{t} \int_w^{r \vee \log t}  h(\sqrt{t} u - z) P(X > u)\,du, \qquad t\to\infty. 
\end{align}
Combining \eqref{eq:rlogt} and \eqref{eq:togaussian}, we have shown 
\[
	\int_0^\infty h(u)P_0(\zeta_t > z + u)\,du \sim \sqrt{t} \int_w^{r \vee \log t}  h(\sqrt{t} u - z) P(X > u)\,du, \qquad t\to\infty.
\]

Lastly, \eqref{eq:reducedasym} is proved upon noting that
\begin{align*}
	\sqrt{t} \int_w^{r \vee \log t}  h(\sqrt{t} u - z) P(X > u)\,du &\sim \sqrt{t} \int_w^{\infty}  h(\sqrt{t} u - z) P(X > u)\,du \\
	&= \int_0^\infty h(u) P(\sqrt{t}X > z + u)\,du, 
\end{align*}
which follows from repeating the arguments culminating in \eqref{eq:afterlog} with $t^{-1/2}\zeta_t$ replaced by $X$:
\begin{align*}
	&\sqrt{t} \int_{r\vee\log t}^\infty h(\sqrt{t} u - z) P(X > u)\,du \\
	&\le C (t^{\beta/2} E[X^\beta 1(X > \log t)] + t^{\beta/2}(\log t)^{\beta-1} E[X 1(X > \log t)])  \\
	&\le  C' t^{\beta/2}(\log t)^{\beta-1}P(X > \log t)^{1/2}\to 0, \qquad t\to\infty. \qedhere
\end{align*}
\end{proof}

\begin{lemma}\label{lem:normal} Let $X$ be a standard Gaussian random variable and let $\varphi(u) = (2\pi)^{-1/2}e^{-u^2/2}$. For any $\beta \ge 0$, 
\[
	E[ (X - u)_+^\beta] = \frac{\Gamma(\beta+1) \varphi(u)}{u^{\beta+1}} + O\Big( \frac{\varphi(u)}{u^{\beta+3}} \Big), \qquad u \to \infty. 
\]
\end{lemma}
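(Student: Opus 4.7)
The plan is to compute $E[(X-u)_+^\beta]$ directly by translation and rescaling, reducing it to a Gamma integral with a small Gaussian perturbation that can be controlled by a simple pointwise bound.

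First, I would write
\[
E[(X-u)_+^\beta] = \int_u^\infty (x-u)^\beta \varphi(x)\,dx = \int_0^\infty y^\beta \varphi(u+y)\,dy,
\]
and use the identity $\varphi(u+y) = \varphi(u) e^{-uy - y^2/2}$, which follows from expanding $(u+y)^2$. This gives
\[
E[(X-u)_+^\beta] = \varphi(u) \int_0^\infty y^\beta e^{-uy} e^{-y^2/2}\,dy.
\]

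Next, I would rescale by setting $y = s/u$ (valid since $u > 0$ for large $u$), producing
\[
E[(X-u)_+^\beta] = \frac{\varphi(u)}{u^{\beta+1}} \int_0^\infty s^\beta e^{-s} e^{-s^2/(2u^2)}\,ds.
\]
The factor $e^{-s^2/(2u^2)}$ is close to $1$, so I would peel it off using the elementary inequality $0 \le 1 - e^{-t} \le t$ for $t \ge 0$, applied with $t = s^2/(2u^2)$:
\[
\int_0^\infty s^\beta e^{-s} e^{-s^2/(2u^2)}\,ds = \Gamma(\beta+1) - \int_0^\infty s^\beta e^{-s} \bigl(1 - e^{-s^2/(2u^2)}\bigr)\,ds,
\]
and the remainder integral is bounded by
\[
\frac{1}{2u^2} \int_0^\infty s^{\beta+2} e^{-s}\,ds = \frac{\Gamma(\beta+3)}{2u^2}.
\]

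Combining these steps yields
\[
E[(X-u)_+^\beta] = \frac{\varphi(u)}{u^{\beta+1}}\bigl( \Gamma(\beta+1) + O(u^{-2}) \bigr) = \frac{\Gamma(\beta+1)\varphi(u)}{u^{\beta+1}} + O\left(\frac{\varphi(u)}{u^{\beta+3}}\right), \qquad u\to\infty,
\]
as claimed. There is no real obstacle here; the only point that requires a moment of care is justifying that the $s$-integral can be controlled uniformly in $u$, which is immediate from the dominating bound $e^{-s^2/(2u^2)} \le 1$ and $1 - e^{-s^2/(2u^2)} \le s^2/(2u^2)$, so no cutoff in $s$ is needed.
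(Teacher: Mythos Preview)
Your proof is correct and essentially identical to the paper's: both translate to $\int_0^\infty y^\beta \varphi(u+y)\,dy$, factor out $\varphi(u)$ via $\varphi(u+y)=\varphi(u)e^{-uy-y^2/2}$, rescale $y=s/u$, and bound the remainder $\int_0^\infty s^\beta e^{-s}(1-e^{-s^2/(2u^2)})\,ds$ by $\Gamma(\beta+3)/(2u^2)$ using $1-e^{-t}\le t$. The only cosmetic difference is that the paper carries the factor $u^{\beta+1}/\varphi(u)$ through from the start rather than isolating $\varphi(u)$ first.
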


\begin{proof} We have 
\begin{align*}
	\frac{u^{\beta+1}}{\varphi(u)} E[ (X - u)_+^\beta] &= \frac{u^{\beta+1}}{\varphi(u)} \int_u^\infty (x-u)^\beta \varphi(x)\,dx = u^{\beta+1} \int_0^\infty x^\beta \frac{\varphi(x+u)}{\varphi(u)}\,dx \\
	&= \int_0^\infty (ux)^\beta e^{-x^2/2 - ux}\,d(ux) = \int_0^\infty x^\beta e^{-(x/u)^2/2 - x}\,dx \\
	&= \Gamma(\beta + 1) + \int_0^\infty x^{\beta} e^{-x} ( e^{-(x/u)^2/2} - 1)\,dx. 
\end{align*}
Using $0 \le 1 - e^{-y} \le y$ for $y \ge 0$, 
\begin{align*}
	0 \le \Gamma(\beta + 1) - \frac{u^{\beta+1}}{\varphi(u)} E[ (X - u)_+^\beta] &= \int_0^\infty x^\beta e^{-x} (1 -  e^{-(x/u)^2/2} ) \,dx 
	\le \frac{\Gamma(\beta + 3)}{2u^2}. \qedhere
\end{align*}
\end{proof}


\begin{thebibliography}{9}

\bibitem{Arr83} R. Arratia. The motion of a tagged particle in the simple symmetric exclusion system on $Z^1$. {\em Ann. Probab.} 11(2): 362--373 (1983). 

\bibitem{Bodineau}
T. Bodineau and B. Dagallier. Large deviations for out of equilibrium correlations in the symmetric simple exclusion process. {\it Electron. J. Probab.} 29: 1--96 (2024).

\bibitem{BorBraLig09} J. Borcea, P. Br\"and\'en, and T. M. Liggett. Negative dependence and the geometry of polynomials. {\em J. Am. Math. Soc.} 22(2): 521--567 (2009). 

\bibitem{ConSet23} M. Conroy and S. Sethuraman. Gumbel laws in the symmetric exclusion process. {\em Commun. Math. Phys.} 402: 723--764 (2023).

\bibitem{Corwin} J. B. Hass, A. N. Caroll-Godfrey, I. Corwin, E. I. Corwin. Anomalous fluctuations of extremes in many-particle diffusion.  {\it Phys. Rev. E} {\bf 107} L022101 (2023).

\bibitem{EFX} D. Erhard, T. Franco, and T. Xu.  Nonequilibrium joint fluctuations for current and
occupation time in the symmetric exclusion process.  {\it Electron. J. Probab.} 29: 1--53 (2024).

\bibitem{FellBook71} W. Feller. {\em An Introduction to Probability Theory}, vol. 2, ed. 2. Wiley, New York (1971). 

\bibitem{Gess} B. Gess and V. Konarovskyi. A quantitative central limit theorem for the simple symmetric exclusion process. {\em arXiv:2408.01238} (2024). 

\bibitem{JanKur14} S. Jansen and N. Kurt. On the notion(s) of duality for Markov processes. {\em Probab. Surveys} 11: 59--120 (2014). 

\bibitem{KL} C. Kipnis and C. Landim.
{\em Scaling limits of
    interacting particle systems.} Grundlehren der Mathematischen
  Wissenschaften 320. Springer-Verlag, Berlin (1991).

\bibitem{LawLim10} G. F. Lawler and V. Limic. {\em Random Walk: A Modern Introduction}. Cambridge Studies in Advanced Mathematics, vol. 123. Cambridge University Press, Cambridge (2010).  

\bibitem{LigBook05} T. M. Liggett. {\em Interacting Particle Systems}. Springer, Berlin (2005). 

\bibitem{LigBook} T. M. Liggett.  {\em Stochastic Interacting Systems: Contact, Voter, Exclusion Processes.} Springer-Verlag, Berlin (1999).

\bibitem{Lig09} T. M. Liggett. Distributional limits for the symmetric exclusion process. {\em Stoch. Process. Appl.} 119: 1--15 (2009). 

\bibitem{MikYsl20} T. Mikosch and J. Yslas. Gumbel and Fr\'echet convergence of the maxima of independent random walks. {\em Adv. Appl. Prob.} 52: 213--236 (2020). 

\bibitem{redig-saada} F. Redig and E. Saada. Particle systems with sources and sinks.  {\it Markov. Proc. Rel. Fields}; {\em arXiv:2403.01893} (2025). 

\bibitem{Spohn}
 H. Spohn. {\em Large Scale Stochastic Dynamics.}  Springer-Verlag, Berlin (1991).

\bibitem{Van10} A. Vandenberg-Rodes. A limit theorem for particle current in the symmetric exclusion process. {\em Electron. Commun. Probab.} 15: 240--252 (2010). 

\bibitem{xue-zhao} X. F. Xue and L. J. Zhao. Moderate deviations for the SSEP with a slow bond. {\it J. Stat. Phys.} 182: 48 (2021).

\end{thebibliography}
\end{document}